\numberwithin{equation}{section}
\title{Sharp decouplings for three dimensional manifolds in $\R^5$}
\author{Ciprian Demeter, Shaoming Guo and Fangye Shi}
\date{}
\def\R{\mathbb{R}}\def\B{\mathbb{P}}
\def\N{\mathbb{N}}
\def\C{\mathbb{C}}\def\nint{\mathop{\diagup\kern-13.0pt\int}}
\def\Z{\mathbb{Z}}
\def\lesim{\lesssim}
\def\beq{\begin{equation}}
\def\endeq{\end{equation}}
\def\bg{\begin{gathered}}
\def\eg{\end{gathered}}
\def\w{w_{B_N}}
\def\83{\frac{8}{3}}
\def\38{\frac{3}{8}}\def\v{{\bf v}}
\def\mc{\mathcal}
\theoremstyle{plain}
\newtheorem{thm}{Theorem}[section]
\newtheorem{prop}[thm]{Proposition}
\newtheorem{lem}[thm]{Lemma}
\newtheorem{cor}[thm]{Corollary}
\newtheorem{defi}[thm]{Definition}
\newtheorem{claim}[thm]{Claim}
\newtheorem{rem}[thm]{Remark}
\newtheorem{question}[thm]{Question}
\newtheorem*{conj*}{Conjecture}
\newtheorem*{openproblem*}{Open Problem}
\begin{document}

\maketitle

\begin{abstract}
We prove a sharp decoupling for a class of three dimensional manifolds in $\R^5$.
\end{abstract}
\maketitle

\let\thefootnote\relax\footnote{
The first  author is partially supported  by the NSF Grant DMS-1161752. AMS subject classification: 42A45}

\section{Introduction}

For two symmetric matrices $A_1,A_2\in M_{3}(\R)$ consider the quadratic forms
$$Q_i(r,s,t)=[r,s,t]A_i[r,s,t]^{T}$$
and the associated   three dimensional quadratic surface in $\R^5$ given by
\beq\label{1803e1.1}
\mc{S}=\mc{S}_{A_1,A_2}:=\{(r, s, t, Q_1(r,s,t), Q_2(r,s,t)): (r, s, t)\in [0, 1]^3\}.
\endeq

For a measurable subset $R\subset [0, 1]^3$ and  a measurable function $g: R\to \C$, define the extension operator associated with $R$ and $\mc{S}$ by
\beq\label{1903e1.2}
E_{R}^{\mc{S}} g(x)=\int_{R} g(r, s, t) e(r x_1+s x_2+t x_3+ Q_1(r,s,t)x_4+ Q_1(r,s,t) x_5)drdsdt.
\endeq
Here and throughout the rest of this paper, we will write
$$
e(z)=e^{2\pi iz},\; z\in \R.
$$
For a positive weight $w: \R^5\to (0, \infty)$, define the weighted $L^p$ norm
$$
\|f\|_{L^p(w)}=(\int_{\R^5} |f(x)|^p w(x)dx)^{1/p}.
$$
For a ball $B_N$ centered at $c(B)$ with radius $N$, we let $w_B$ denote the weight
$$
w_B(x)=\frac{1}{(1+\frac{|x-c(B)|}{N})^{C}}.
$$
The exponent $C$ is a large but unspecified constant.
\medskip

Given  $N\ge 1$, $p\ge 2$ and $\mc{S}$ as in \eqref{1803e1.1}, let $D_{\mc{S}}(N, p)$ be the smallest constant such that the following so-called $l^pL^p$ {\em decoupling inequality}
\beq\label{1803e1.6}
\|E_{[0, 1]^3}^\mc{S}g\|_{L^p(w_{B_N})} \le D_\mc{S}(N, p) (\sum_{\substack{\Delta\subset [0, 1]^3\\l(\Delta)=N^{-1/2}}} \|E_{\Delta}^\mc{S}g\|_{L^p(w_{B_N})}^p)^{1/p}
\endeq
holds true for each $g: [0,1]^3\to \C$ and each ball $B_N\subset \R^5$ of radius $N$. Here the summation runs over a finitely overlapping cover of $[0, 1]^3$ by squares $\Delta$ of side length $l(\Delta)=N^{-1/2}$.

The estimate
\beq\label{ff2}
D_\mc{S}(N, 2)\sim 1
\endeq
is an easy consequence of $L^2$ orthogonality, while the estimate
\beq\label{ff3}
D_\mc{S}(N, \infty)\sim N^{3/2}
\endeq
follows from the triangle inequality (upper bound) and from testing \eqref{1803e1.6} with $g\equiv 1$ (lower bound).
Also, we will see in Section \ref{s7} that we have the following universal lower bound
\beq
\label{hhfhdhdcdsjlacmcm;ckj}
D_\mc{S}(N, p)\gtrsim \max\{N^{\frac{3}{2}(\frac{1}{2}-\frac{1}{p})},N^{\frac{3}{2}-\frac{5}{p}}\}.
\endeq

Our main result identifies a large class of manifolds for which this universal lower bound is essentially sharp. It may in fact be the case that this is the largest class of quadratic manifolds with this property. The discussion in the Appendix produces strong evidence in this direction.

\begin{thm}
	\label{1803thm1.1}
	Assume that $Q_1$ and $Q_2$ do not have any common real linear factor. Moreover, assume that for each nonzero vector $(u,v,w)\in\R^3$, the determinant
	$$\det \begin{bmatrix}\frac{\partial Q_1}{\partial r} & \frac{\partial Q_1}{\partial s}& \frac{\partial Q_1}{\partial t}\\\\\frac{\partial Q_2}{\partial r} & \frac{\partial Q_2}{\partial s}& \frac{\partial Q_2}{\partial t}\\\\u & v& w \end{bmatrix}$$
	is not the zero polynomial, when regarded as a function of $r,s,t$. Then for each $\epsilon>0$ and each $p\ge 2$, there exists $C_{\epsilon, p}$ such that
	\beq\label{1803e1.7}
	D_\mc{S}(N, p)\le
	\begin{cases}
		\hfill C_{\epsilon, p} N^{\frac{3}{2}(\frac{1}{2}-\frac{1}{p})+\epsilon}, \hfill &  \text{if } 2\le p\le \frac{14}3\\
		\hfill C_{\epsilon, p} N^{\frac{3}{2}-\frac{5}{p}+\epsilon}, \hfill &  \text{if }  p\ge \frac{14}3
	\end{cases}
	\endeq
\end{thm}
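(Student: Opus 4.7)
The plan is an induction on scales in the tradition of Bourgain--Demeter, combined with a Bourgain--Guth broad/narrow dichotomy. By H\"older interpolation between the trivial endpoints recorded in \eqref{ff2}--\eqref{ff3}, the two estimates in \eqref{1803e1.7} reduce to the single estimate at the critical exponent $p_c=14/3$, at which the two regimes of the universal lower bound \eqref{hhfhdhdcdsjlacmcm;ckj} meet. Thus it suffices to prove
\[
D_\mc{S}(N,14/3)\lesim_\epsilon N^{3/7+\epsilon},
\]
which I would establish by induction on $N$.

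At each inductive step, fix an intermediate scale $K$ and decompose $[0,1]^3$ into caps $\tau$ of side $K^{-1/2}$. At each point $x\in B_N$, decide whether $|E_{[0,1]^3}g|(x)$ is dominated by contributions from caps $\tau$ with pairwise transverse tangent planes on $\mc{S}$ (broad case), or by caps clustered near a single 2-plane $\{ur+vs+wt=c\}$ in parameter space for some $(u,v,w)\in S^2$ (narrow case). The broad case would be controlled by the Bennett--Carbery--Tao multilinear Kakeya / Brascamp--Lieb inequality: the determinant hypothesis furnishes exactly the quantitative transversality required to run the multilinear estimate with no loss in $N$. For the narrow case, I would foliate the concentration slab along the $(u,v,w)$-direction; each leaf is mapped by $\mc{S}$ onto a 2-dimensional quadratic surface lying in a 4-dimensional affine subspace of $\R^5$, and the determinant hypothesis forces this surface to be a nondegenerate 2-dim quadratic surface in $\R^4$. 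The sharp $l^p$ decoupling for such 2-surfaces (critical $p=6$) is available from earlier work; applying it leafwise, combining via $L^p$-orthogonality across leaves, and using parabolic rescaling to convert the resulting lower-scale bound back to scale $N$, closes the induction with at most an $N^\epsilon$ loss per iteration.

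\textbf{Main obstacle.} The principal obstacle is the narrow case. One must show that the pointwise hypothesis ``for every $(u,v,w)\neq 0$ the displayed determinant is not the zero polynomial in $(r,s,t)$'' translates into a \emph{quantitative} nondegeneracy of the sliced 2-dim surfaces, uniform over $(u,v,w)\in S^2$, so that the input decoupling in $\R^4$ may be applied with uniform constants. Closely related, one needs to verify that the determinant hypothesis is preserved under parabolic rescaling of $(Q_1,Q_2)$, which is what permits the induction to iterate; this is a direct computation on the rescaled quadratic forms since parabolic rescaling acts linearly on $(r,s,t)$ and the determinant is polynomial. Finally, the exponents produced by the broad multilinear estimate and the narrow 2-dim decoupling input must balance to yield exactly the sharp critical value $p_c=14/3$, matching the universal lower bound \eqref{hhfhdhdcdsjlacmcm;ckj}; this numerical matching is what prevents the argument from losing a power of $N$.
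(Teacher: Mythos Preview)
Your overall architecture (reduce to $p=14/3$, Bourgain--Guth broad/narrow, multilinear in the broad case, lower-dimensional decoupling in the narrow case, parabolic rescaling to iterate) matches the paper's. However, there are two genuine gaps in the narrow case.

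\textbf{The narrow obstruction is a 2-variety, not a hyperplane.} You assume that failure of transversality forces the significant caps to cluster near an affine plane $\{ur+vs+wt=c\}$. This is not what the Brascamp--Lieb condition gives. The relevant obstructions are the loci where $\dim(\pi_{r,s,t}(V))$ drops for various $V\subset\R^5$, and since $\partial Q_i/\partial r,\partial Q_i/\partial s,\partial Q_i/\partial t$ are \emph{linear} in $(r,s,t)$, those loci are zero sets of polynomials of degree up to two. So in the narrow case the caps cluster near an arbitrary quadric surface in $[0,1]^3$, not just a plane. The paper handles this in two stages: first it proves the narrow decoupling when $H$ is a plane (its Section~4), and then it upgrades from planes to arbitrary smooth surfaces by an induction-on-scales/approximation argument (its Section~5). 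Your plan is missing this second stage entirely.

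\textbf{The planar narrow case is more delicate than leafwise decoupling.} Even when $H$ is a plane, your proposed mechanism (``foliate the slab along $(u,v,w)$, apply the sharp $2$-surface-in-$\R^4$ decoupling on each leaf, recombine by $L^p$-orthogonality'') does not work as stated. The slab has thickness $K^{-1/2}$ while the spatial ball has radius $K$; by the uncertainty principle there is no usable orthogonality across leaves at that scale, so you cannot ``combine via $L^p$-orthogonality across leaves'' without paying a power of $K$. Moreover, the determinant hypothesis does \emph{not} force the restricted surface $(r,s)\mapsto(Q_1|_H,Q_2|_H)$ to be a nondegenerate $2$-surface in $\R^4$; it only guarantees (Lemma~2.4 in the paper) that \emph{one} of $Q_1|_H$, $Q_2|_H$ has a quadratic coefficient bounded away from zero. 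The paper's resolution is to drop one of the $Q_i$ via a dimension-reduction lemma and then prove that the $K^{-1/2}$-neighborhood of the strip on the resulting hypersurface in $\R^4$ is in fact within $O(K^{-1})$ of a suitable \emph{cylinder} (over a curve or a surface with curvature $\sim 1$, depending on a trichotomy of cases); it then invokes cylindrical decoupling at the correct scale. This is the main new ingredient, and your plan does not contain it.
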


The assumption that $Q_1$ and $Q_2$ do not have any common real linear factor is the same as saying that they do not vanish on any hyperplane at the same time. This is a necessary condition of obtaining decoupling inequalities \eqref{1803e1.7}. To see that it is necessary, we assume that $Q_1$ and $Q_2$ vanish on a hyperplane at the same time, say $\{(r, s, t): t=0\}$. In \eqref{1803e1.6}, we let $g$ be a function supported on the $1/N$ neighbourhood of this hyperplane. Let $B_N$ be the ball of radius $N$ centered at the origin. Hence for every $x\in B_N$ and every $(r, s, t)\in \text{supp}(g)$, it holds that 
\beq
|Q_1(r, s, t)x_4|+|Q_2(r, s, t)x_5|\lesim 1.
\endeq
According to the uncertainty principle, the ball $B_N$ is not able to distinguish the surface $\mc{S}=\mc{S}_{A_1, A_2}$ from $\{(r, s, t, 0, 0): (r, s, t)\in [0, 1]^3\}$. However the best decoupling inequality we can expect for the latter surface and the above function $g$ is given by 
\beq
\|E_{[0, 1]^3}^\mc{S}g\|_{L^p(w_{B_N})} \lesim N^{2(\frac{1}{2}-\frac{1}{p})} (\sum_{\substack{\Delta\subset [0, 1]^3\\l(\Delta)=N^{-1/2}}} \|E_{\Delta}^\mc{S}g\|_{L^p(w_{B_N})}^p)^{1/p}
\endeq
for every $p\ge 2$, which follows easily from an $L^2$ orthogonality argument. In the region $2\le p\le 14/3$, this loss $N^{2(\frac{1}{2}-\frac{1}{p})}$ is much more than what we can afford, which is $N^{\frac{3}{2}(\frac{1}{2}-\frac{1}{p})}$. This proves the necessity of the first assumption on $Q_1$ and $Q_2$. \\

The standard consequence of \eqref{1803e1.7} for exponential sums is discussed in the last section. There are other interesting applications to the decoupling theory of curves that will appear elsewhere.
\smallskip
In the next section we will derive the following corollary.

\begin{cor}\label{1803thm1.11}Let $A_1,A_2\in M_{3}(\R)$ be two symmetric matrices, such that there exists an invertible matrix $M\in GL_{3}(\R)$ satisfying
	\beq
	\label{ff4}
	M^{T}A_iM=\begin{bmatrix}\lambda_{i,1}&0&0\\ 0&\lambda_{i,2}&0\\ 0&0&\lambda_{i,3}\end{bmatrix}, \;\;1\le i\le 2.
	\endeq
	Let $\mc{S}$ be the surface defined in \eqref{1803e1.1}.
	\\
	\\
	(a) Assume  that all the two by two minors of the matrix
	\beq
	\label{ff1}
	\begin{bmatrix}\lambda_{1,1}&\lambda_{1,2}&\lambda_{1,3}\\ \lambda_{2,1}&\lambda_{2,2}&\lambda_{2,3}\end{bmatrix}
	\endeq
	have nonzero determinant.
	Then \eqref{1803e1.7} holds.
	\\
	\\
	(b) If at least one of the two by two minors of \eqref{ff1} is singular then we have
	\beq\label{1803e1.7add}
	\lim_{N\to\infty}\frac{D_\mc{S}(N, p)}{N^{\frac{3}{2}(\frac{1}{2}-\frac{1}{p})}}=\infty
	\endeq
	for each $p>4$.
\end{cor}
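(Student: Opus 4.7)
For part (a), the plan is to reduce to Theorem~\ref{1803thm1.1} via the simultaneous diagonalization. The linear change of parameters $(r,s,t)^{T}\mapsto M(r,s,t)^{T}$ replaces the forms by $Q_i=\lambda_{i,1}r^2+\lambda_{i,2}s^2+\lambda_{i,3}t^2$, and since $D_{\mc S}(N,p)$ is invariant, up to absolute constants, under nondegenerate linear changes of both the parameter cube and the ambient $\R^5$, it suffices to check the hypothesis of Theorem~\ref{1803thm1.1} for the diagonalized surface. Expanding the relevant $3\times 3$ determinant along its third row gives
\[
4\bigl[w(\lambda_{1,1}\lambda_{2,2}-\lambda_{1,2}\lambda_{2,1})rs-v(\lambda_{1,1}\lambda_{2,3}-\lambda_{1,3}\lambda_{2,1})rt+u(\lambda_{1,2}\lambda_{2,3}-\lambda_{1,3}\lambda_{2,2})st\bigr],
\]
whose coefficients are, up to sign, the three $2\times 2$ minors of \eqref{ff1}. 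By hypothesis these are all nonzero, so whenever $(u,v,w)\neq 0$ at least one of the linearly independent monomials $rs,rt,st$ appears with nonzero coefficient, and the polynomial does not vanish identically.

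For part (b), the strategy is to exploit the vanishing minor to induce a product/cylinder structure on $\mc S$ and then deploy tensor test functions. Say $\lambda_{1,1}\lambda_{2,2}-\lambda_{1,2}\lambda_{2,1}=0$ (the other cases are symmetric); then a nonzero combination $aQ_1+bQ_2$ has vanishing $r^2$ and $s^2$ coefficients, and is therefore of the form $ct^2$. A linear change in $(x_4,x_5)$, which preserves $D_{\mc S}(N,p)$, replaces $Q_2$ by $ct^2$; when $c\neq 0$, a further change in $x_4$ eliminates the $t^2$ term from $Q_1$, reducing matters to the surface $\{(r,s,t,\alpha r^2+\beta s^2,ct^2)\}$ with $(\alpha,\beta)\neq(0,0)$.

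In the main subcase $\alpha\beta c\neq 0$, this reduced surface is the Cartesian product of the $2$-paraboloid $\mc S_1=\{(r,s,\alpha r^2+\beta s^2)\}\subset\R^3$ and the parabola $\mc S_2=\{(t,ct^2)\}\subset\R^2$. Testing with tensor functions $g(r,s,t)=h(r,s)k(t)$, the extension factors as $E^{\mc S}g(x)=E^{\mc S_1}h(x_1,x_2,x_4)\,E^{\mc S_2}k(x_3,x_5)$, and the caps split as $\Delta=\Delta_1\times I$; both $\|E^{\mc S}g\|_{L^p(w_{B_N})}^p$ and $\sum_\Delta\|E^{\mc S}g_\Delta\|_{L^p(w_{B_N})}^p$ then factor (up to constants) across the tensor product, yielding $D_{\mc S}(N,p)\gtrsim D_{\mc S_1}(N,p)\,D_{\mc S_2}(N,p)$ after optimizing $h,k$ among decoupling extremizers. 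Substituting the sharp $l^pL^p$ constants $D_{\mc S_1}(N,p)\sim N^{1-3/p}$ for $p\ge 4$ and $D_{\mc S_2}(N,p)\sim N^{1/4-1/(2p)}$ for $p\le 6$ (respectively $\sim N^{1/2-2/p}$ for $p\ge 6$), one finds $D_{\mc S}(N,p)/N^{3/2(1/2-1/p)}\gtrsim N^{1/2-2/p}$ on $[4,6]$ and $\gtrsim N^{3/4-7/(2p)}$ on $[6,\infty)$; both expressions tend to infinity precisely when $p>4$, establishing \eqref{1803e1.7add}.

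The remaining degenerate subcases are handled by the same tensoring principle: when $c=0$, the surface lies in a $4$-plane and the problem reduces to the sharp decoupling for a $3$-paraboloid in $\R^4$, whose $l^pL^p$ constant $\sim N^{3/2-4/p}$ already beats $N^{3/2(1/2-1/p)}$ for $p>10/3$; when exactly one of $\alpha,\beta$ vanishes, the reduced surface is a cylinder whose decoupling factors as two parabola decouplings times a flat contribution $\sim N^{1/2-1/p}$, again exceeding $N^{3/2(1/2-1/p)}$ for $p>4$. The main technical obstacle is the systematic bookkeeping needed to exhaust the possible degenerations, but in every case the mechanism is identical: a vanishing $2\times 2$ minor forces a lower-dimensional product structure on $\mc S$, and tensor test functions promote lower-dimensional sharp decouplings into a strict improvement over the threshold $N^{3/2(1/2-1/p)}$.
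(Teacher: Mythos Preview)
Your proof is correct and follows essentially the same route as the paper: both reduce via Proposition~\ref{ff8} (invariance of $D_{\mc S}(N,p)$ under linear changes in the parameters and in $(x_4,x_5)$), verify the hypothesis of Theorem~\ref{1803thm1.1} after diagonalization for part (a), and for part (b) use a singular minor to force a product/cylinder structure and then bound $D_{\mc S}$ below by a product of lower-dimensional decoupling constants via tensor test functions. Your explicit determinant expansion in (a) and explicit exponent bookkeeping in (b) are more concrete than the paper's treatment (the paper cites the qualitative divergence $\lim_{N\to\infty}D_{\mc S_1}(N,p)/N^{1/2-1/p}=\infty$ for $p>4$ rather than plugging in $N^{1-3/p}$), but the underlying arguments are the same; note only that in (b) you need merely the universal lower bounds $D_{\mc S_1}(N,p)\gtrsim N^{1-3/p}$ and $D_{\mc S_2}(N,p)\gtrsim N^{\frac12(\frac12-\frac1p)}$, so the ``$\sim$'' and the nondegeneracy assumptions on $\alpha,\beta$ in the lower-dimensional factors are not actually required.
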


The requirement from \eqref{ff4} is rather mild, in particular it does not force $A_1$ and $A_2$ to commute. We refer to \cite{Bec} for a detailed discussion. However inequality \eqref{1803e1.7} also holds true in some cases when $A_1,A_2$ do not satisfy \eqref{ff4}. One such example is
the manifold
$$\{(r, s, t, r^2+s^2, st): (r, s, t)\in [0, 1]^3\},$$
which certainly falls under the scope of Theorem \ref{1803thm1.1}.

Due to \eqref{hhfhdhdcdsjlacmcm;ckj}, the upper bounds in \eqref{1803e1.7} are sharp (apart from the $N^\epsilon$ term). It will suffice to prove the estimate \eqref{1803e1.7} at the critical exponent $\frac{14}3$, as then we can interpolate it with \eqref{ff2} and \eqref{ff3}. We refer the reader to \cite{BD1} for details on how to interpolate decoupling inequalities.
\medskip

The reason we only consider quadratic manifolds is that in some sense they tell the whole story. Indeed, on the one hand \eqref{hhfhdhdcdsjlacmcm;ckj} shows that the decoupling constants for three dimensional manifolds in $\R^5$ do not get smaller in the presence of cubic or higher order terms. In other words, the critical exponent is never larger than $\frac{14}3$. On the other hand, each manifold can be locally approximated  by quadratic manifolds via Taylor's formula, and the general theory can be understood by invoking induction on scales as in \cite{PrSe} (see also Section 7 from \cite{BD1}).

\medskip
Part (b) of Corollary \ref{1803thm1.11} says that if the critical exponent is smaller than $\frac{14}3$, then it is in fact at most 4. These manifolds exhibit various levels of degeneracy, and classifying them will not be our concern here. A more detailed discussion is included in the next section. One surprising example that falls into this category is the very symmetric manifold
$$\{(r, s, t, r^2+s^2+t^2, rs+rt+st): (r, s, t)\in [0, 1]^3\}.
$$
In this case $A_1$ is the identity matrix, so \eqref{ff4} is easily satisfied. The second matrix will have two equal eigenvalues.

One difficulty when approaching three dimensional manifolds in $\R^5$, and in general the $d$-dimensional manifolds in $\R^n$ with $d\not=1,n-1$, is the lack of an appropriate notion of ``curvature". In the case of hypersurfaces ($d=n-1$) decouplings are guided by the principal curvatures, while for curves ($d=1$), by torsion. Similar difficulties have been previously encountered when trying to establish the restriction theory for  manifolds with $1<d<n-1$. We hope that our current work will reignite the interest in this circle of problems.

\medskip

This paper follows the methodology developed by the first author with Jean Bourgain in recent related papers. Most of the material in sections \ref{s2}, \ref{s4} and  \ref{s5} is rather standard. The main new subtleties appear  in Section \ref{s3}. More precisely, Section \ref{s3} addresses the lower dimensional contribution from the Bourgain--Guth-type iteration, where a new difficulty arises. We give a brief description here. As is typical in the multilinear approach, the lower dimensional contribution on a (spatial) ball $B_K$ is coming from (frequency) $K$-cubes  clustered near (that is, lying on the $O(K^{-1})$-neighborhood of) lower dimensional manifolds (in our case these are 2-varieties), as quantified in Theorem \ref{1803thm1.7}. For all practical purposes we may in fact think of these varieties as being planes, as explained in Section \ref{snew}. The main issue is how to estimate such a contribution coming from the $K$-cubes lying in the  $O(K^{-1})$-neighborhood of a fixed plane. There are two major options to start with. The first one is to decouple (separate) the contribution of each of the $K$-cubes. Since we integrate on balls $B_K$, the only decoupling we can perform is the very costly ``trivial decoupling" (see Lemma \ref{1903lemma1.12}). This type of decoupling is simply a manifestation of $L^2$ orthogonality and does not exploit curvature. It turns out that it is not strong enough for our purposes. The other option, and this is the one that we follow, is to perform a Bourgain--Demeter-type  decoupling. This seeks to exploit curvature, but only decouples into frequency cubes having the larger size $K^{-1/2}$. We are thus forced to consider the contribution coming from the $O(K^{-1/2})$-neighborhood of the plane. This scenario also appeared in a simpler context in \cite{BDGu} (see Claim 5.10 there), where the particular nature of the manifold allowed us to estimate the corresponding contribution by invoking dimension-reduction arguments. There is a subtle difference in this context that renders that type of argument useless. To address the issue, we prove that the $O(K^{-1/2})$-wide strip on our manifold is within $O(K^{-1})$ from a certain non degenerate cylinder. The scale $O(K^{-1})$ is now small enough to be accommodated by the uncertainty principle,  when combined with a cylindrical decoupling. The overall argument detailed in Section \ref{s3} is rather delicate, and relies on a careful combination of trivial and Bourgain--Demeter-type decouplings.

In Section \ref{s6} we use linear algebra to prove that the only obstructions to transversality are the 2-varieties. With some extra work we could probably reduce the list of enemies to planes and curves, but we do no pursue this approach. Instead, it turns out that we can control the lower dimensional contribution clustered near each 2-variety, once we can do it for planes. This follows via an approximation argument very similar to the one from \cite{Oh}, that we describe in Section \ref{snew}.

In Section \ref{s7} we describe some related examples and post some open  questions. The Appendix presents strong evidence that the class of manifolds we investigate in this paper contains all manifolds with critical index $\frac{14}{3}$.

\bigskip

\section{Linear algebra reductions}
In this section we demonstrate that the decoupling theory is essentially invariant under certain transformations. This will allow us to give a simple proof of Corollary \ref{1803thm1.11} using Theorem \ref{1803thm1.1}.

\begin{prop}
	\label{ff8}
	Let $A_1,A_2\in M_3(\R)$, $M\in GL_3(\R)$ and $\beta=[\beta_{ij}]_{1\le i,j\le 2}\in GL_2(\R)$. Define
	$$
	B_i:=M^{T}(\beta_{i,1}A_1+\beta_{i,2}A_2)M,\;\;1\le i\le 2.$$
	Let $D_{1}(N,p)$ and $D_2(N,p)$ be the decoupling constants associated with $\mc{S}_{A_1,A_2}$ and $\mc{S}_{B_1,B_2}$, respectively. Then for each $p\ge 2$
	$$D_{1}(N,p)\sim_{p,M,\beta} D_2(N',p),$$
	where $N'\sim_{M,\beta} N$.
\end{prop}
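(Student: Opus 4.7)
My plan is to show that the two extension operators $E^{\mc{S}_{B_1,B_2}}$ and $E^{\mc{S}_{A_1,A_2}}$ are conjugate via invertible linear changes of variable in both frequency ($\R^3$) and physical ($\R^5$) space, and that the $l^p L^p$ decoupling structure is preserved under such conjugations up to constants depending only on $M$ and $\beta$. Proving $D_1(N,p)\lesssim_{p,M,\beta}D_2(N',p)$ with $N'\sim N$ gives one direction, and the reverse follows by swapping the roles of $(A_i)$ and $(B_i)$ and replacing $M,\beta$ by their inverses.

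The central computation is the substitution $(r',s',t')^T=M(r,s,t)^T$ inside $E_R^{\mc{S}_{B_1,B_2}}g(x)$. Using $B_i=M^T(\beta_{i,1}A_1+\beta_{i,2}A_2)M$, the quadratic phase becomes $\beta_{i,1}Q_1(r',s',t')+\beta_{i,2}Q_2(r',s',t')$; regrouping the total exponent as $r'y_1+s'y_2+t'y_3+Q_1(r',s',t')y_4+Q_2(r',s',t')y_5$ identifies $(y_1,y_2,y_3)^T=(M^{-1})^T(x_1,x_2,x_3)^T$ and $(y_4,y_5)^T=\beta^T(x_4,x_5)^T$. Writing $L:\R^5\to\R^5$ for the resulting invertible block-diagonal map, this yields
\[
E_R^{\mc{S}_{B_1,B_2}}g(x)=|\det M|^{-1}\,E_{MR}^{\mc{S}_{A_1,A_2}}h(Lx),\qquad h(r',s',t'):=g(M^{-1}(r',s',t')^T).
\]
I then change variables $y=Lx$ in the weighted $L^p$ norm; since $\|L\|$ and $\|L^{-1}\|$ are controlled by $M,\beta$, the image $LB_N$ is trapped between two concentric balls of radii $\sim_{M,\beta}N$, so
\[
\|E_{[0,1]^3}^{\mc{S}_{B_1,B_2}}g\|_{L^p(w_{B_N})}\sim_{p,M,\beta}\|E_{M[0,1]^3}^{\mc{S}_{A_1,A_2}}h\|_{L^p(w_{\tilde B_{N'}})}
\]
for a ball $\tilde B_{N'}$ of radius $N'\sim_{M,\beta}N$.

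To finish, I apply the definition of $D_1(N',p)$ to the right-hand side and translate back to the original coordinates. Two bookkeeping points arise. First, $M[0,1]^3$ is a parallelepiped rather than a unit cube, so I cover it by $O_M(1)$ unit cubes and use the fact that translating the parameter $(r,s,t)$ acts on $E^{\mc{S}_{A_1,A_2}}$ by a modulation and a linear change of $(x_1,x_2,x_3)$ (both of which preserve $L^p$-norms), so the decoupling constant on each unit cube is $\sim D_1(N',p)$. Second, the frequency change of variable sends the standard $N^{-1/2}$-cubes $\Delta''\subset[0,1]^3$ to parallelepipeds $M\Delta''$ that are mutually comparable to the standard $N'^{-1/2}$-cubes: each $\Delta\subset M[0,1]^3$ of side $N'^{-1/2}$ lies inside some $M\Delta''$ with $l(\Delta'')\sim_M N^{-1/2}$, and each $M\Delta''$ is covered by $O_M(1)$ such $\Delta$. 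Undoing the frequency change of variable on each piece converts $\|E_{\Delta}^{\mc{S}_{A_1,A_2}}h\|_{L^p(w_{\tilde B_{N'}})}$ into $\|E_{\Delta''}^{\mc{S}_{B_1,B_2}}g\|_{L^p(w_{B_N})}$ up to $(p,M,\beta)$-constants, yielding the claim.

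No genuine obstacle is expected: the whole argument is linear-algebra bookkeeping. The one point worth emphasizing, and the reason the proof is not totally trivial, is that the natural decoupling partitions on the two sides differ (standard cubes for $\mc{S}_{A_1,A_2}$ versus images of standard cubes under $M$ for $\mc{S}_{B_1,B_2}$), so one must pass through a mutual refinement at scale $N^{-1/2}\sim N'^{-1/2}$.
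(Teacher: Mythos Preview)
Your proposal is correct and follows essentially the same approach as the paper: both identify the linear conjugacy between the two extension operators (your block map $L$ is exactly the paper's transformation $T$, and your formula $E_R^{\mc{S}_{B_1,B_2}}g(x)=|\det M|^{-1}E_{MR}^{\mc{S}_{A_1,A_2}}h(Lx)$ is the inverse of the paper's identity $E^{(1)}_{R}g=\det(M)\,E^{(2)}_{(L_M)^{-1}R}(g\circ L_M)$), then use finite distortion of $L$ to transfer the weight $w_{B_N}$, and finally reconcile the cube/parallelepiped mismatch at scale $N^{-1/2}$. The only cosmetic difference is that the paper phrases the last step as a Fourier multiplier bound ($\widehat{F_1}=\widehat{F_2}1_P$ with $P$ a rectangular box adapted to the scales $N^{-1/2}$ and $N^{-1}$), whereas you phrase it as a mutual $O_M(1)$-covering between the two grids; these are equivalent, and in fact your covering argument implicitly relies on the same multiplier bound to pass from $E_{M^{-1}\Delta}$ to $E_{\Delta''}$ when $M^{-1}\Delta\subset\Delta''$.
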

\begin{proof}
	Denote by $E^{(1)}$ and $E^{(2)}$ the extension operators associated with the two surfaces.
	For each square $R\subset [0,1]^3$ we may write, denoting $v=(r,s,t)$ and using the changes of variables
	$$v=L_M(w):=wM^{T}$$
	and
	$$[x_4,x_5]=[y_4,y_5]\beta,\;\;\;\;[y_1,y_2,y_3]=[x_1,x_2,x_3]M$$
	$$E^{(1)}_{R}g(x_1,\ldots,x_5)=\int_{R}g(v)e(v\cdot(x_1,x_2,x_3)+vA_1v^{T}x_4+vA_2v^{T}x_5)dv=$$
	$$\det(M)\int_{(L_M)^{-1}R}g\circ L_M(w)e(w\cdot(y_1,y_2,y_3)+wM^{T}A_1Mw^{T}x_4+wM^{T}A_2Mw^{T}x_5)dw$$
	$$=\det(M)\int_{(L_M)^{-1}R}g\circ L_M(w)e(w\cdot(y_1,y_2,y_3)+wB_1w^{T}y_4+wB_2w^{T}y_5)dw$$
	$$=det(M)E^{(2)}_{(L_M)^{-1}R}g\circ L_M(y_1,y_2,y_3,y_4,y_5).$$
	The proposition will now follow once we make two observations. First, since $\beta$ and $M$ are nonsingular, the transformation
	$$T(x_1,\ldots,x_5)=(y_1,y_2,y_3,y_4,y_5)$$
	has finite distortion. In particular, for each ball $B\subset \R^5$
	$$w_B(T^{-1}y)\sim w_B(y).$$
	Second, $(L_M)^{-1}R$ will be a parallelogram with area comparable to the area of $R$, and which sits inside a square $R'$ with side length comparable to that of $R$. In particular, if $l(R)=N^{-1/2}$ then
	$$\|E^{(2)}_{(L_M)^{-1}R}h\|_{L^p(w_{B_N})}\lesssim \|E^{(2)}_{R'}h\|_{L^p(w_{B_N})}.$$
	This can be seen by observing that $F_1=E^{(2)}_{(L_M)^{-1}R}h$ and $F_2=E^{(2)}_{R'}h$ are related via
	$$\widehat{F_1}=\widehat{F_2}1_P,$$
	with $P$ a rectangular box in $\R^5$ having three side lengths comparable to $N^{-1/2}$ and two of them comparable to $N^{-1}$.
	
	The details are left to the interested reader.

\end{proof}

As a first application of this result, we prove part (b) of Corollary \ref{1803thm1.11}. It is rather immediate that the existence of a singular two by two minor of \eqref{ff1} leads to the existence of a $\beta\in GL_2(\R)$ so that the matrix
$$\beta\begin{bmatrix}\lambda_{1,1}&\lambda_{1,2}&\lambda_{1,3}\\ \lambda_{2,1}&\lambda_{2,2}&\lambda_{2,3}\end{bmatrix}$$
is of one of the following types
\beq
\begin{split}
	& \begin{bmatrix}0&0&0\\ a&b&c\end{bmatrix}, \begin{bmatrix}0&0&c\\ a&b&0\end{bmatrix}, \begin{bmatrix}0&c&0\\ a&0&b\end{bmatrix}, \begin{bmatrix}c&0&0\\ 0&a&b\end{bmatrix},
\end{split}
\endeq
with $a,b,c\in\{0,1,-1\}$.
In the first case, the decoupling constant of $\mc{S}$ will be comparable to that of the manifold in $\R^4$
$$\{(r,s,t,ar^2+bs^2+ct^2),\;0\le r,s,t\le 1\}.$$
The most favorable case is when $a,b,c\not=0$, when most curvature is present. In \cite{BD10} it is proved that the critical index for this manifold is $\frac{10}3$. In particular,
$$\lim_{N\to\infty}\frac{D_{\mc{S}}(N,p)}{N^{\frac32(\frac12-\frac1p)}}=\infty,\;\;p>\frac{10}{3}.$$

The remaining three cases are symmetric, so it suffices to consider the first one. The decoupling constant of $\mc{S}$ will in this case be comparable to that of the product-type manifold
$$\mc{S}_{prod}:=\{(r,s,t,ar^2+bs^2, ct^2),\;0\le r,s,t\le 1\}.$$
Let
$$\mc{S}_1=\{(r,s,ar^2+bs^2),\;0\le r,s\le 1\}$$
and
$$\mc{S}_2=\{(t,ct^2),\;0\le t\le 1\}.$$
By testing \eqref{1803e1.6} with functions of the form $g(r,s,t)=g_1(r,s)g_2(t)$ we see that
$$D_{\mc{S}_{prod}}(N,p)\gtrsim D_{\mc{S}_1}(N,p)D_{\mc{S}_2}(N,p).$$
The values of $D_{\mc{S}_1}(N,p)$ and $D_{\mc{S}_2}(N,p)$ are smallest when $a,b,c\not=0$, which guarantees most curvature. But even in this case, the results in \cite{BD10} show that
$$\lim_{N\to\infty}\frac{D_{\mc{S}_1}(N,p)}{N^{\frac12-\frac1p}}=\infty,\;\;p>4$$
$$D_{\mc{S}_2}(N,p)\gtrsim N^{\frac12(\frac12-\frac1p)}, \;\;p\ge 2.$$
Combining these leads to the desired estimate
$$\lim_{N\to\infty}\frac{D_{\mc{S}_{prod}}(N,p)}{N^{\frac{3}{2}(\frac12-\frac1p)}}=\infty,\;\;p>4$$
\bigskip

Proposition \ref{ff8} also has the following rather immediate consequence.
\begin{cor}
	Let $A_1,A_2\in M_3(\R)$ satisfy the requirement of part (a) of Corollary \ref{1803thm1.11}, and let $\mc{S}_{A_1,A_2}$ be the associated surface. Then
	$$D_{\mc{S}_{A_1,A_2}}(N,p)\sim_{p,M,\beta} D_{\mc{S}}(N',p),$$
	where $N'\sim_{A_1,A_2} N$
	and
	\beq
	\label{ff9}
	\mc{S}:=\{(r, s, t, \frac{1}2 (r^2+A s^2),\frac{1}2 ( t^2+ B s^2)): (r, s, t)\in [0, 1]^3\},
	\endeq
	for some $A,B\not=0$ depending on $A_1,A_2$.
\end{cor}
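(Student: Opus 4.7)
The plan is to apply Proposition \ref{ff8} exactly once, using the matrix $M$ from the hypothesis together with a carefully chosen $\beta\in GL_2(\R)$ that forces the transformed pair $B_1,B_2$ to take the explicit diagonal form $B_1=\tfrac12\mathrm{diag}(1,A,0)$, $B_2=\tfrac12\mathrm{diag}(0,B,1)$ for some $A,B\ne 0$. The associated surface $\mc{S}_{B_1,B_2}$ is then exactly the one in \eqref{ff9}, and the comparison $D_{\mc{S}_{A_1,A_2}}(N,p)\sim_{p,M,\beta}D_{\mc{S}}(N',p)$ is immediate from Proposition \ref{ff8}.

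For any $\beta$, the matrix $B_i=M^T(\beta_{i,1}A_1+\beta_{i,2}A_2)M$ is a linear combination of $M^TA_1M$ and $M^TA_2M$, which are diagonal by hypothesis. Hence $B_i$ is diagonal with $j$-th diagonal entry equal to $(\beta\Lambda)_{i,j}$, where $\Lambda$ is the $2\times 3$ matrix in \eqref{ff1}. The problem therefore reduces to producing $\beta\in GL_2(\R)$ satisfying
$$
\beta\Lambda=\begin{bmatrix} 1/2 & A/2 & 0 \\ 0 & B/2 & 1/2 \end{bmatrix}
$$
for some $A,B\ne 0$.

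To construct $\beta$, I would take its first row proportional to $(\lambda_{2,3},-\lambda_{1,3})$ (forced by the zero in position $(1,3)$) and its second row proportional to $(\lambda_{2,1},-\lambda_{1,1})$ (forced by the zero in position $(2,1)$). The nonvanishing of the $2\times 2$ minor of $\Lambda$ from columns $1$ and $3$ allows the two proportionality constants to be chosen so that $(\beta\Lambda)_{1,1}=(\beta\Lambda)_{2,3}=1/2$; a direct computation shows $\det\beta$ equals a nonzero multiple of that same minor, so $\beta$ is automatically invertible. The quantities $A=2(\beta\Lambda)_{1,2}$ and $B=2(\beta\Lambda)_{2,2}$ then come out to be nonzero scalar multiples of the minors of $\Lambda$ from columns $2,3$ and $1,2$ respectively, which are nonzero by the assumption in part (a) of Corollary \ref{1803thm1.11}.

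The ``main obstacle'' is really just the bookkeeping in the construction of $\beta$: one has to notice that each of the three nonvanishing-minor conditions of part (a) plays a specific role --- one supplies the simultaneous normalization of the $(1,1)$ and $(2,3)$ entries (and the invertibility of $\beta$), and the other two ensure $A\ne 0$ and $B\ne 0$ respectively. Once $\beta$ has been exhibited, Proposition \ref{ff8} closes the argument with no further analytic input.
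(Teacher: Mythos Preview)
Your argument is correct and is exactly the approach the paper intends: the paper declares this corollary a ``rather immediate consequence'' of Proposition~\ref{ff8} without further detail, and you have simply supplied the explicit choice of $\beta$ (and verified $A,B\ne 0$ and $\det\beta\ne 0$ via the three nonvanishing minors) that makes that application work. There is nothing to add.
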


It is now immediate that part (a) of Corollary \ref{1803thm1.11} will follow from Theorem \ref{1803thm1.1}.
\bigskip
\begin{rem}
	\label{rmnf897-594090-ei=rfo=[=1[}
	It is easy to see that the requirement in Theorem \ref{1803thm1.1} is invariant under nonsingular linear changes of variables. Indeed, assume $Q_1,Q_2$ satisfy this requirement, and let $B\in M_3(\R)$ be nonsingular. Define $\widetilde{Q_i}(r,s,t)=Q_i(B[r,s,t]^{T})$. It now suffices to note that
	$$\begin{bmatrix}\frac{\partial \widetilde{Q_1}}{\partial r}(\v) & \frac{\partial \widetilde{Q_1}}{\partial s}(\v)& \frac{\partial \widetilde{Q_1}}{\partial t}(\v)\\\\\frac{\partial \widetilde{Q_2}}{\partial r}(\v) & \frac{\partial \widetilde{Q_2}}{\partial s}(\v)& \frac{\partial \widetilde{Q_2}}{\partial t}(\v)\\\\u & v& w \end{bmatrix}=\begin{bmatrix}\frac{\partial Q_1}{\partial r}(\v') & \frac{\partial Q_1}{\partial s}(\v')& \frac{\partial Q_1}{\partial t}(\v')\\\\\frac{\partial Q_2}{\partial r}(\v') & \frac{\partial Q_2}{\partial s}(\v')& \frac{\partial Q_2}{\partial t}(\v')\\\\u' & v'& w' \end{bmatrix}B$$
	where $[u,v,w]=[u',v',w']B$ and $\v=(r,s,t)$, $\v'=B[r,s,t]^{T}$.
\end{rem}

\bigskip
The rest of the paper will be concerned with the proof  of Theorem \ref{1803thm1.1}.

\section{Transversality}
\label{s2}
Let $m$ be a positive integer. For $1\le j\le m$, let $V_j$ be a $d$-dimensional linear subspace of $\R^n$. Let also $\pi_j: \R^n\to V_j$ denote the orthogonal projection onto $V_j$. Define
\beq
\Lambda(f_1, f_2, ..., f_m)=\int_{\R^n} \prod_{j=1}^m f_j (\pi_j (x))dx,
\endeq
for $f_j: V_j\to \C$. We recall the following theorem from Bennett, Carbery, Christ and Tao \cite{BCCT}.
\begin{thm}
	\label{nndkjlaleqffhqeuvasdk}
	Given $p\ge 1$, the estimate
	\beq\label{1803e1.8}
	|\Lambda(f_1, f_2, ..., f_m)| \lesim \prod_{j=1}^m \|f_j\|_{p},
	\endeq
	holds if and only if $np=dm$ and the following Brascamp-Lieb transversality condition is satisfied
	\beq\label{1803e1.9}
	dim(V) \le \frac{1}{p} \sum_{j=1}^m dim(\pi_j(V)), \text{ for each linear subspace } V\subset \R^n.
	\endeq
\end{thm}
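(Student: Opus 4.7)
The plan is to treat this as the Brascamp--Lieb inequality in the geometric form of Bennett--Carbery--Christ--Tao, and to recover their argument. Necessity of the two conditions is the easy half. For the scaling condition $np = dm$, I would test with rescalings $f_j(\pi_j(x)) \mapsto f_j(\pi_j(x)/\lambda)$, which multiply the left-hand side by $\lambda^n$ and the right-hand side by $\lambda^{dm/p}$; letting $\lambda \to 0, \infty$ forces equality of exponents. For the transversality condition \eqref{1803e1.9}, I would test with $f_j = \mathbf{1}_{\pi_j(V) \cap B}$ for a small ball $B$ around $0$ in an arbitrary subspace $V$, and use the inclusion $\{x : \pi_j(x) \in \pi_j(V) \cap B, \forall j\} \supset V \cap B'$ for some $B'$; comparing $n$-dimensional volume of the integration region with the product of $p$-th norms yields the inequality on dimensions.

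The sufficiency is the substantial half. The approach I would take is the heat-flow monotonicity method of BCCT. Define the Brascamp--Lieb functional $\mathrm{BL}(f_1,\ldots,f_m) := \Lambda(f_1,\ldots,f_m)/\prod_j \|f_j\|_p$ and replace each $f_j$ by $e^{t\Delta_{V_j}} f_j^p$ raised to the power $1/p$, evolving under heat flow on each $V_j$. The key technical step is to show that $\mathrm{BL}$ is monotone non-decreasing in $t$ under the transversality hypothesis \eqref{1803e1.9}, by computing $\frac{d}{dt}\log \mathrm{BL}$ and using the matrix inequality arising from the BL condition together with Cauchy--Schwarz in the Fisher-information framework. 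Since heat flow drives arbitrary non-negative inputs toward Gaussian profiles in the long-time limit, the supremum of $\mathrm{BL}$ is attained (in the limit) by Gaussian inputs.

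Once the problem is reduced to Gaussian data $f_j(y) = e^{-\pi \langle A_j y, y\rangle}$ with $A_j$ positive definite on $V_j$, a direct computation converts the inequality \eqref{1803e1.8} into a finite-dimensional determinantal inequality of the form
\[
\det\Bigl(\sum_{j=1}^m \pi_j^{*} A_j \pi_j\Bigr)^{1/2} \ge c \prod_{j=1}^m \det(A_j)^{1/p},
\]
which I would establish by an induction on $n$. The inductive step splits into two cases depending on whether the transversality condition \eqref{1803e1.9} holds with strict inequality on every proper subspace (the ``simple'' case, handled by Lagrange multipliers and compactness of the Gaussian datum space) or with equality on some proper $V$ (the ``degenerate'' case, where one factors the integral across $V$ and $V^\perp$ and applies the inductive hypothesis on each factor separately).

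The main obstacle will be the monotonicity step in paragraph two: the correct pointwise matrix inequality that makes $\frac{d}{dt}\log \mathrm{BL} \ge 0$ is not transparent, and its derivation is where the transversality condition \eqref{1803e1.9} is actually used in a nontrivial way. In practice, since this entire theorem is a direct quotation from \cite{BCCT}, for this paper I would simply invoke the result as a black box and not reproduce the proof; the only task internal to our argument is to verify \eqref{1803e1.9} later when applying Theorem \ref{nndkjlaleqffhqeuvasdk} to specific collections of subspaces $V_j$ arising from the tangent-space geometry of $\mc{S}_{A_1,A_2}$.
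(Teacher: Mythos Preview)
Your proposal is correct and matches the paper exactly: the paper does not prove this theorem at all but simply recalls it as a quotation from \cite{BCCT}, and your final paragraph recognizes this and recommends invoking it as a black box. Your sketch of the BCCT heat-flow argument is accurate but goes well beyond what the paper does; for the purposes of this paper nothing more than the citation is needed.
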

An equivalent formulation of the estimate \eqref{1803e1.8} is
\beq\label{1712e1.9}
\|(\prod_{j=1}^m g_j\circ \pi_j)^{1/m} \|_q \lesim (\prod_{j=1}^m \|g_j\|_2)^{1/m},
\endeq
with $q=\frac{2n}{d}.$
The restriction that $p\ge 1$ becomes $dm\ge n$. The transversality condition \eqref{1803e1.9} becomes
\beq\label{1712e1.11}
dim(V)\le \frac{n}{d m}\sum_{j=1}^m dim(\pi_j(V)), \text{ for each subspace } V\subset \R^n.
\endeq
Now let us be more specific about $d, n$ and $m$.  In this section, we will take $n=5$, since we are considering a three dimensional surface $\mc{S}$  in $\R^5$. We will take $d=3$, since the tangent space to $\mc{S}$ has dimension three. The degree $m$ of multi-linearity is more complicated. It will not be a fixed integer, but will rather depend on the scale of the sets (cubes) we are using.
\medskip

With this numerology \eqref{1712e1.9} becomes
\beq\label{1712e1.12}
\|(\prod_{j=1}^m g_j\circ \pi_j)^{1/m} \|_{L^{\frac{10}3}(\R^5)} \lesim (\prod_{j=1}^m \|g_j\|_2)^{1/m},
\endeq
and the condition \eqref{1712e1.11} becomes
\beq\label{1712e1.13}
dim(V)\le \frac{5}{3m}\sum_{j=1}^m dim(\pi_j(V)), \text{ for each subspace } V\subset \R^5.
\endeq
\bigskip

Fix now $\mc{S}$ satisfying the requirement of Theorem \ref{1803thm1.1}.
We will next try to understand what it mean for \eqref{1712e1.13} to be satisfied, given that $V_j$ are the tangent spaces to $\mc{S}$ at the points $(r_j,s_j,t_j)\in[0,1]^3$. We will see that this means that a rather big fraction of these points  should not belong to a  2-variety. By that we will mean the (real) zero set of a nontrivial  polynomial $P(r,s,t)$ of degree at most two.

In order to achieve this, we need more notation.
Let $\mc{M}$ be an $m\times n$ matrix with $m\ge n$. We define $\det(\mc{M})$ to be the $l^1$ sum of the determinants of all  $n\times n$ sub-matrices of $\mc{M}$.

At one point $(r, s, t)\in [0, 1]^3$, we denote by $n_1, n_2$ and $n_3$ the three tangent vectors of the surface $\mc{S}$ given by
\beq
\begin{split}
	& n_1=(1, 0, 0, \frac{\partial{Q_1}}{\partial r}, \frac{\partial{Q_2}}{\partial r}),\\
	& n_2=(0, 1, 0, \frac{\partial{Q_1}}{\partial s}, \frac{\partial{Q_2}}{\partial s}),\\
	& n_3=(0, 0, 1, \frac{\partial{Q_1}}{\partial t}, \frac{\partial{Q_2}}{\partial t}).
\end{split}
\endeq
The tangent space they span will be denoted by $V_{r,s,t}$. The projection onto this space will be denoted by $\pi_{r,s,t}$.
\medskip

\noindent For a one dimensional subspace $V\subset \R^5$ spanned by a unit vector $x$, denote by $\mc{M}_V(r,s,t)$ the $1\times 3$ matrix
\beq
[x\cdot n_1, x\cdot n_2, x\cdot n_3].
\endeq
For a two dimensional subspace $V\subset \R^5$ spanned by two orthogonal unit vectors $x, y\in \R^5$, denote by $\mc{M}_V(r,s,t)$ the $2\times 3$ matrix
\beq\label{1803e1.15}
\begin{bmatrix}
	x\cdot n_1 & x\cdot n_2 & x\cdot n_3\\
	y\cdot n_1 & y\cdot n_2 & y\cdot n_3
\end{bmatrix}
\endeq
Similarly, for a four dimensional subspace $V\subset \R^5$ spanned by four orthogonal unit vectors $x, y, z, \theta \in \R^5$, we denote by $\mc{M}_V(r,s,t)$ the $4\times 3$ matrix
\beq\label{1803e1.16}
\begin{bmatrix}
	x\cdot n_1 & x\cdot n_2 & x\cdot n_3\\
	y\cdot n_1 & y\cdot n_2 & y\cdot n_3\\
	z\cdot n_1 & z\cdot n_2 & z\cdot n_3\\
	\theta\cdot n_1 & \theta\cdot n_2 & \theta\cdot n_3
\end{bmatrix}
\endeq

\begin{rem}
	\label{ww14}
	Note that for $V\subset \R^5$ of dimensions $1,2$ or $4$, the condition $\det(\mc{M}_V(r,s,t))\not=0$ is equivalent with $dim(\pi_{r,s,t}(V))$ being at least $1,2$ or $3$, respectively. This is a consequence of the rank-nullity theorem.
\end{rem}

Now we are ready to state our transversality condition.
\begin{defi}[Transversality]\label{1803defi1.3}
	A collection of $m\ge 10^4 $  sets $S_1, ..., S_m\subset [0, 1]^3$ is said to be $\nu$-transverse if for each
	\beq
	1\le i_1\neq i_2\neq ... \neq i_{[m/100]}\le m,
	\endeq
	we have that
	for each subspace $V\subset \R^5$ of dimension one, two or four,
	\beq\label{2201e4.2}
	\max_{1\le j\le [m/100]} \inf_{(r, s, t)\in S_{i_j}} \left|\det(\mc{M}_V(r,s,t)) \right| \ge \nu.
	\endeq
\end{defi}

We next observe that the transversality condition in Definition \ref{1803defi1.3} is stronger than the Brascamp-Lieb transversality condition \eqref{1712e1.13}.

\begin{prop}
	Consider $m$ sets $S_j$ which are $\nu$-transverse for some $\nu>0$. Then for each $(r_j,s_j,t_j)\in S_j$, the $m$ tangent planes $V_j, 1\le j\le m$ spanned by the vectors $n_i(r_j, s_j, t_j),\;1\le i\le 3,$ satisfy the condition \eqref{1712e1.13}.
\end{prop}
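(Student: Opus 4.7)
The plan is to verify the Brascamp--Lieb transversality \eqref{1712e1.13} by case analysis on $d:=\dim V\in\{0,1,2,3,4,5\}$. The two extremes are essentially free: $d=0$ is vacuous, and for $d=5$ we have $\pi_{r_j,s_j,t_j}(\R^5)=V_j$ of dimension $3$ for every $j$, so $\sum_j\dim\pi_j(V)=3m$, which matches $\tfrac{3md}{5}$ exactly.

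For $d\in\{1,2,4\}$ I would invoke Definition~\ref{1803defi1.3} directly on $V$. The crucial combinatorial step is the contrapositive of \eqref{2201e4.2}: if the set of indices $j$ with
$$\inf_{(r,s,t)\in S_j}|\det \mc M_V(r,s,t)|<\nu$$
had size $\ge [m/100]$, we could choose $[m/100]$ such indices and contradict the $\max\ge\nu$ hypothesis. Hence at most $[m/100]-1$ indices are ``bad'', leaving at least $99m/100$ ``good'' indices $j$ for which $\det\mc M_V(r_j,s_j,t_j)\ne 0$. By Remark~\ref{ww14}, at every such $j$ we get $\dim\pi_j(V)\ge\min(d,3)$. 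Therefore
$$\sum_{j=1}^m \dim\pi_j(V)\ge \min(d,3)\cdot\tfrac{99m}{100},$$
which comfortably exceeds the required $\tfrac{3md}{5}$ in each of the cases $d=1,2,4$. The tightest of the three is $d=4$, where $3\cdot\tfrac{99}{100}=2.97>\tfrac{12}{5}$.

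The remaining case $d=3$ is not directly covered by Definition~\ref{1803defi1.3} and is the main (if mild) obstacle. My plan is to reduce it to the $d=2$ case by picking any two-dimensional subspace $V'\subset V$; the previous argument applied to $V'$ gives $\dim\pi_j(V')\ge 2$ for at least $99m/100$ indices, and the inclusion $\pi_j(V)\supset\pi_j(V')$ upgrades this to $\dim\pi_j(V)\ge 2$ for the same indices. The resulting bound
$$\sum_{j=1}^m \dim\pi_j(V) \ge 2\cdot\tfrac{99m}{100}=\tfrac{198m}{100}$$
clears the required $\tfrac{9m}{5}$, which completes the verification of \eqref{1712e1.13}. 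All the heavy lifting is done by the dimensional dictionary in Remark~\ref{ww14} and a single pigeonhole count; the monotonicity trick handles the missing dimension $3$ case.
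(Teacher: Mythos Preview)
Your proof is correct and follows essentially the same approach as the paper: handle $d=5$ trivially, invoke Definition~\ref{1803defi1.3} together with Remark~\ref{ww14} for $d\in\{1,2,4\}$, and count the fraction of ``good'' indices. The paper dispatches $d=3$ with the single phrase ``similarly''; your reduction to a two-dimensional subspace $V'\subset V$ makes that implicit step explicit and is exactly the natural way to fill the gap, since the definition of $\nu$-transversality does not speak about three-dimensional $V$ directly.
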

\begin{proof}
	The case $dim(V)=5$ is trivially true, as we always have $dim(\pi_j(V))=3$ for all $j$. When $dim(V)=4$, in order to verify \eqref{1712e1.13}, it suffices to prove that there are at least $9m/10$  $V_j$ with $dim(\pi_j(V))\ge 3$. This follows from Remark \ref{ww14} and \eqref{2201e4.2}. The cases $dim(V)=1, 2, 3$ can be proved similarly.
\end{proof}
An $\alpha$-cube is defined to be a closed cube with side length $\frac1{\alpha}$ inside $[0, 1]^3$. If $\alpha\in 2^{\Z}, $ the collection of all dyadic $\alpha$-cubes will be denoted by $Col_\alpha$. We will implicitly assume that various values of $\alpha$ we use are in $2^{\Z}$.
\medskip

The following result provides a nice criterium for transversality.

\begin{thm}\label{1803thm1.7}
	Consider an arbitrary collection ${\mathcal C}$ of $m(\ge 10^4)$  $K$-cubes such  that the $10/K$ neighbourhood of each 2-variety in $\R^3$ intersects no more than $m/100$ of these $K$-cubes.  Then the cubes in ${\mathcal C}$ are $\nu_K$-transverse, for some $\nu_K>0$ that depends only on $K$.
\end{thm}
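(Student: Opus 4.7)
The plan is a compactness argument on the Grassmannian combined with the hypothesis of Theorem \ref{1803thm1.1}. Suppose towards contradiction that no such $\nu_K$ exists. Then there exist sequences $\nu_n\to 0$, subsets $I_n\subset\{1,\ldots,m\}$ of size $[m/100]$, dimensions $d_n\in\{1,2,4\}$, and subspaces $V_n$ of dimension $d_n$ in $\R^5$, witnessing the failure: for every $i\in I_n$, the cube $S_i$ contains a point where $|\det(\mc{M}_{V_n})|<\nu_n$. Since only finitely many pairs $(I,d)$ occur and each Grassmannian $\mathrm{Gr}(d,5)$ is compact, we may pass to a subsequence on which $I_n\equiv I$, $d_n\equiv d$, and $V_n\to V\in\mathrm{Gr}(d,5)$. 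The entries of $\mc{M}_V(r,s,t)$ depend jointly continuously on $V$ and $(r,s,t)$, and each cube is compact, so for every $i\in I$ the cube $S_i$ contains a point of the zero set $Z_V:=\{(r,s,t)\in\R^3:\det(\mc{M}_V(r,s,t))=0\}$.

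The heart of the argument is to show that $Z_V$ is contained in some $2$-variety $Y\subset\R^3$; once this is done, each of the $[m/100]$ cubes indexed by $I$ lies in the $10/K$-neighborhood of $Y$, contradicting the hypothesis on $\mathcal{C}$. Both the definition of $\mc{M}_V$ and the identification in Remark \ref{ww14} show that $Z_V$ is the common zero locus of a small collection of polynomials in $(r,s,t)$ of degree at most $2$, so it suffices to produce one of these polynomials that is not identically zero.

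For $d=1$, writing $V=\mathrm{span}(x)$, the entries $x\cdot n_i$ of $\mc{M}_V$ are affine in $(r,s,t)$; their simultaneous identical vanishing would force $x_1=x_2=x_3=0$ together with $x_4Q_1+x_5Q_2\equiv 0$, i.e.\ $Q_1$ and $Q_2$ linearly dependent, which contradicts the hypothesis of Theorem \ref{1803thm1.1} (the top two rows of the determining matrix would then be proportional). For $d=4$, setting $V^\perp=\mathrm{span}(x)$, Remark \ref{ww14} identifies $Z_V$ with the solution set of two affine equations $x_4=(x_1,x_2,x_3)\cdot\nabla Q_1$ and $x_5=(x_1,x_2,x_3)\cdot\nabla Q_2$. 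Simultaneous identical vanishing forces $x_4=x_5=0$ and $(x_1,x_2,x_3)\in\ker A_1\cap\ker A_2$; for any nonzero vector $U$ in this common kernel, both $\nabla Q_i$ are perpendicular to $U$ pointwise, so $\nabla Q_1\times\nabla Q_2$ is parallel to $U$ pointwise, and choosing $(u,v,w)\perp U$ nonzero makes the determinant in the hypothesis of Theorem \ref{1803thm1.1} vanish identically, a contradiction.

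The principal obstacle is the case $d=2$, in which the minors of $\mc{M}_V$ are genuinely quadratic. Writing $V=\mathrm{span}(x,y)$ for an orthonormal basis, a short expansion shows that the degree-$2$ part of the $(i,j)$-minor of $\mc{M}_V$ equals $(x_4y_5-x_5y_4)$ times the corresponding $(i,j)$-minor of the Jacobian of $(Q_1,Q_2)$. If $(x_4,x_5)$ and $(y_4,y_5)$ are linearly independent, the identical vanishing of all three minors of $\mc{M}_V$ forces the identical vanishing of all three Jacobian minors, which contradicts the hypothesis. If they are linearly dependent, a rotation within $V$ reduces us to $y=(y_1,y_2,y_3,0,0)$ with $(y_1,y_2,y_3)\ne 0$ and $(x_4,x_5)\ne 0$; the $(i,j)$-minors are then affine, and their identical vanishing forces $x_4\nabla Q_1+x_5\nabla Q_2$ to be parallel to $(y_1,y_2,y_3)$ at every $(r,s,t)$. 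Plugging $(u,v,w)=(y_1,y_2,y_3)$ into the scalar triple product representation of the determinant in Theorem \ref{1803thm1.1} then yields an identical zero, the final contradiction, and the identification of $Z_V$ with a $2$-variety is complete.
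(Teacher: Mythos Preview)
Your argument is essentially the paper's own: a compactness argument on the Grassmannian reduces matters to showing that for each $V$ of dimension $1$, $2$, or $4$ the set $Z_V$ lies in a $2$-variety, which is exactly the content of Lemma \ref{2201lemma3.1}; you then reprove that lemma inline with direct minor computations rather than the paper's Taylor-projection formalism $P_\xi$. One small omission: in the $d=2$ linearly-dependent subcase your reduction to $(x_4,x_5)\ne 0$ tacitly assumes the map $V\to\R^2$, $v\mapsto(v_4,v_5)$, has rank exactly one; when it has rank zero the entries of $\mc{M}_V$ are constants and $Z_V=\emptyset$, which is trivial but should be mentioned. (Also, strictly speaking $[m/100]\le m/100$, so the final contradiction needs a tiny adjustment of constants; the paper glosses over the same point.)
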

\begin{proof}
	The proof will follow from a standard compactness argument combined with Lemma \ref{2201lemma3.1}.
	
\end{proof}
\medskip

For each subset $R\subset [0, 1]^3$ and $0<\delta<1$, let $\mc{N}_{R, \delta}$ be a $\delta$-neighbourhood of
$$
\mc{S}_R=\{(r, s, t, Q_1(r,s,t),Q_2(r,s,t)): (r, s, t)\in R\}.
$$

The following multilinear restriction theorem is a particular case of a result from \cite{BBFL}. Its proof relies on Theorem \ref{nndkjlaleqffhqeuvasdk} and induction on scales.
\begin{thm}\label{2001thm1.9}
	Let $R_j$ with $j=1, ..., m$ be a collection of subsets of $[0,1]^3$ that are $\nu$-transverse. For each $f_j: \mc{N}_{R_j, 1/N}\to \C$, each $\epsilon>0$ and each ball $B_N\subset \R^5$ of radius $N\ge 1$, we have
	\beq\label{1803e1.26}
	\|\prod_{j=1}^m |\hat{f}_j|^{1/m}\|_{L^{\frac{10}3}(B_N)}\lesim_{\epsilon,\nu} N^{-1+\epsilon}\left( \prod_{j=1}^m\|f_j\|_{L^2(
		\mathcal{N}_{R_j, 1/N})} \right)^{1/m}.
	\endeq
\end{thm}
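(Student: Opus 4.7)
The estimate \eqref{1803e1.26} is a multilinear Fourier extension theorem in the Bennett--Carbery--Tao mould, adapted to a three-dimensional submanifold of $\R^5$; it is the instance of the main theorem of \cite{BBFL} for the surface $\mc{S}$. The proof I propose combines the Brascamp--Lieb inequality (Theorem \ref{nndkjlaleqffhqeuvasdk}) with an induction on scales. The first ingredient is already in place: the preceding proposition shows that the $\nu$-transversality of the $R_j$'s implies the Brascamp--Lieb transversality \eqref{1712e1.13} for the tangent planes $V_j$ of $\mc{S}$ at any choice of base points $(r_j,s_j,t_j)\in R_j$; by the continuity of the Brascamp--Lieb constant in its linear data, Theorem \ref{nndkjlaleqffhqeuvasdk} then yields a uniform Brascamp--Lieb inequality whose constant depends only on $\nu$.

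For the base of the induction one works on a single unit ball. Since $f_j$ is supported in $\mc{N}_{R_j,1/N}$, a set of thickness $\sim 1/N$ in the two normal directions to $\mc{S}$ (hence of measure $\sim N^{-2}$), the uncertainty principle forces $\hat f_j$ to be essentially constant on balls of radius $N$ in the normal directions. Combining this with Plancherel in the normal fibers and applying the equivalent form \eqref{1712e1.9} of the Brascamp--Lieb inequality to the resulting tangent-plane approximants yields the unit-scale bound
$$\Bigl\|\prod_{j=1}^m |\hat f_j|^{1/m}\Bigr\|_{L^{10/3}(B_1)}\lesssim_\nu N^{-1}\prod_{j=1}^m \|f_j\|_{L^2}^{1/m},$$
with the $N^{-1}$ factor reflecting the small measure of $\mc{N}_{R_j,1/N}$.

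To bootstrap from unit scale to scale $N$, let $A(R)$ denote the best constant in the analogue of \eqref{1803e1.26} at spatial scale $R$. Following the Bennett--Carbery--Tao induction-on-scales argument -- adapted in \cite{BBFL} to codimension larger than one -- one decomposes $B_N$ into a finitely overlapping cover by balls $B_K$ of an intermediate radius $K=K(\epsilon)$, controls the contribution of each $B_K$ by combining the base case with the inductive hypothesis, and uses that on each $B_K$ the transversality parameter $\nu$ is preserved up to bounded factors. This produces a recursion that, upon iterating and optimizing over $K$, gives $A(N)\lesssim_{\epsilon,\nu} N^\epsilon$ for every $\epsilon>0$.

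The hard part is the last step: one must verify that the transversality is preserved uniformly under the multi-scale decomposition and wave-packet reorganization, so that the induction can be closed with constants depending only on $\nu$. In our quadratic setting this preservation is obtained via an affine rescaling of caps of $\mc{S}$, which maps rescaled configurations to ones with comparable Brascamp--Lieb geometry; making this uniformity work at every intermediate scale is the technical heart of the argument in \cite{BBFL}.
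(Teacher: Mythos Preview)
Your proposal is correct and matches the paper's approach: the paper does not give a detailed proof but simply records that this is a particular case of the main result of \cite{BBFL}, whose proof combines Theorem~\ref{nndkjlaleqffhqeuvasdk} with an induction on scales, exactly as you outline. One minor refinement: in your final paragraph, the mechanism in \cite{BBFL} that closes the induction uniformly in $\nu$ is the \emph{stability} of the Brascamp--Lieb constant under perturbation of the projections $\pi_j$, rather than an affine rescaling of caps; this is what allows the argument to apply to general smooth submanifolds and not only quadratic ones.
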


We close this section with presenting the following consequence, a direct application of  Proposition 6.5 from \cite{BD2} with $n=5$, $d=3$ and
$$\kappa_p=\frac{p-\frac{10}{3}}{p-2},\;\;p\ge \frac{10}{3}.$$

This result will play a key role in the iteration from Section \ref{s5}.
\begin{prop}
	\label{iovjurgyptn8vbgu89357893v7589ty7056893}
	Let $R_j$ with $j=1, ..., m$ be a collection of subsets of $[0,1]^3$ that are $\nu$-transverse.
	For  each ball $B_R$ in $\R^5$ with radius $R\ge N\ge 1$, $p\ge \frac{10}{3}$, $\epsilon>0$, $\kappa_p\le \kappa\le 1$ and $g_i:R_i\to \C$ we have
	$$\|(\prod_{i=1}^{m}\sum_{\atop{l(\tau)=N^{-1/2}}}|E_{\tau}g_i|^2)^{\frac1{2m}}\|_{L^{p}(w_{B_R})}\lesssim_{\epsilon,\nu}$$
	$$
	N^{\epsilon}\|(\prod_{i=1}^{m}\sum_{\atop{l(\Delta)=N^{-1}}}|E_{\Delta}g_i|^2)^{\frac1{2m}}
	\|_{L^{p}(w_{B_R})}^{1-\kappa}
	(\prod_{i=1}^{m}\sum_{\atop{l(\tau)=N^{-1/2}}}\|E_{\tau}g_i\|_{L^{p}(w_{B_R})}^2)^{\frac{\kappa}{2m}}.
	$$
\end{prop}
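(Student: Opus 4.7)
As the authors explicitly note, this is a direct application of \cite[Proposition 6.5]{BD2}, an abstract ball-inflation device that produces mixed-norm interpolated multilinear decouplings from a single input: a multilinear restriction estimate at the Brascamp--Lieb critical exponent $p_c = 2n/d$. In our setting $(n,d) = (5,3)$ we have $p_c = 10/3$, and the required input is precisely Theorem \ref{2001thm1.9}, with the $\nu$-transversality of the $R_j$ passing through to the implicit constant. My plan is to verify the hypotheses and to recall the three-step skeleton of that abstract proposition specialized to our parameters. Write $L$, $A$, $B$ for the left-hand side and the two quantities on the right-hand side of the target inequality, so that the goal is $L \lesssim_{\epsilon,\nu} N^\epsilon A^{1-\kappa} B^\kappa$ for every $\kappa \in [\kappa_p, 1]$.

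First, the trivial endpoint $\kappa = 1$ requires no transversality. For any $p \ge 2$, H\"older with equal weights $(1/m,\ldots,1/m)$ followed by Minkowski (using $p/2 \ge 1$) gives
\begin{equation*}
L \le \prod_i \Bigl\|\Bigl(\sum_\tau |E_\tau g_i|^2\Bigr)^{\tfrac12}\Bigr\|_{L^p(w_{B_R})}^{\tfrac1m} \le \prod_i \Bigl(\sum_\tau \|E_\tau g_i\|_{L^p(w_{B_R})}^2\Bigr)^{\tfrac{1}{2m}} = B.
\end{equation*}

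Next, for the critical endpoint $\kappa = 0$, $p = 10/3$, one covers $B_R$ by balls of radius $N^{1/2}$, the spatial uncertainty scale of the caps $\tau$ of side $N^{-1/2}$. On each such ball $B_{N^{1/2}}$, the locally-constant property of $|E_\tau g_i|$ together with $L^2$-orthogonality of $\{E_\Delta g_i : \Delta \subset \tau\}$ lets one replace $\sum_\tau |E_\tau g_i|^2$ by a local $L^2$-average of $|\sum_{\Delta\subset\tau} E_\Delta g_i|^2$. Theorem \ref{2001thm1.9}, applied to the finer caps $\Delta$ at the outer scale $R$, then produces the $N^\epsilon$ constant at the critical $L^{10/3}$ level and delivers $L \lesssim_{\epsilon,\nu} N^\epsilon A$ in this case.

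Finally, for general $p \ge 10/3$ one interpolates between the two endpoints: the exponent $\kappa_p = (p-10/3)/(p-2)$ is exactly what balances the critical $L^{10/3}$ bound against the $L^p$ Minkowski bound. This interpolation is carried out at the level of the square-function integrands, whose local constancy on balls of radii $N^{1/2}$ and $N$ (for $\tau$ and $\Delta$ caps respectively) makes the requisite H\"older manipulation valid, as in \cite[Proposition 6.5]{BD2}. Once the case $\kappa = \kappa_p$ is in hand, the full range $\kappa \in [\kappa_p, 1]$ follows by one more H\"older: setting $\beta = (\kappa-\kappa_p)/(1-\kappa_p) \in [0,1]$ and combining with the $\kappa=1$ bound above,
\begin{equation*}
L = L^{1-\beta}\cdot L^\beta \le (N^\epsilon A^{1-\kappa_p} B^{\kappa_p})^{1-\beta} B^\beta = N^{\epsilon(1-\beta)} A^{1-\kappa} B^\kappa.
\end{equation*}
The delicate part is the critical-endpoint bound combined with the extraction of precisely the exponent $\kappa_p$ through the interpolation: both rely on wave-packet analysis and $L^2$-orthogonality at the intermediate scale $N^{1/2}$ rather than on naive Riesz--Thorin. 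The remaining manipulations are routine H\"older and Minkowski bookkeeping.
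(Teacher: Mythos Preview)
Your proposal is correct and follows exactly the paper's approach: the paper does not give an independent proof but simply invokes \cite[Proposition~6.5]{BD2} with $(n,d)=(5,3)$ and the multilinear restriction input Theorem~\ref{2001thm1.9}, which is precisely what you do. Your additional sketch of the three-step mechanism behind that cited proposition (trivial $\kappa=1$ endpoint, ball-inflation/multilinear-restriction at $p=10/3$, and H\"older interpolation to reach $\kappa_p$ and then all $\kappa\in[\kappa_p,1]$) is accurate and goes beyond what the paper records, though one minor imprecision is that the multilinear restriction is applied at the intermediate scale (on balls of radius $N$ inside $B_R$), not ``at the outer scale $R$''.
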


\section{Lower dimensional decoupling}
\label{s3}
Recall that we are working with a manifold $$\mc{S}:=\{(r, s, t, Q_1(r,s,t),Q_2(r,s,t)): (r, s, t)\in [0, 1]^3\}$$
satisfying the requirement in Theorem \ref{1803thm1.1}.
The assumption that $Q_1$ and $Q_2$ do not vanish on any hyperplane at the same time, implies that there exists $\eta>0$ such that for any $\alpha,\beta,\gamma = O(1)$, either $Q_1 (r,s,\alpha+\beta r+\gamma s)$ or $Q_2 (r,s,\alpha+\beta r+\gamma s)$, when viewed as polynomials in $r,s$, will have at least one quadratic coefficient which has absolute value at least $\eta$.\
\smallskip

Unless specified otherwise, the extension operator $E$ will refer to $E^{\mc{S}}$.\
\smallskip

The main result of this section is the following   decoupling inequality for cubes clustered near a plane. It will be used in the next section in the proof of Proposition \ref{1903prop1.10}.
\begin{thm}\label{1903lemma1.14}
	Let $H$ be a  plane in $\R^3$ which intersects $[0, 1]^3$. Fix a large constant $K\gg 1$. Let $\mc{R}\subset Col_{K^{1/2}}$ be a collections of $K^{1/2}$-cubes, each of which intersects $H$. Then we have the decoupling inequality
	\beq\label{1903e1.39}
	\|\sum_{R\in \mc{R}}E_{R}g\|_{L^p(w_{B_K})}\lesim_\epsilon K^{\frac{3}{2}(\frac{1}{2}-\frac{1}{p})+\epsilon} (\sum_{R\in \mc{R}} \|E_{R}g\|_{L^p(w_{B_K})}^p)^{1/p},
	\endeq
	for all $4\le p\le 6$.
\end{thm}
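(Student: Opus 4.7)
The plan is to parametrize $H$ as a graph over the $(r,s)$-plane and reduce \eqref{1903e1.39} to a $2$-dimensional decoupling for an auxiliary base surface. By Remark~\ref{rmnf897-594090-ei=rfo=[=1[} and a compactness argument over the set of planes meeting $[0,1]^3$, we may assume after a nonsingular linear change in $(r,s,t)$ that $H=\{t=\alpha+\beta r+\gamma s\}$ with $\alpha,\beta,\gamma=O(1)$; planes parallel to the $t$-axis are handled by graphing over a different pair of coordinates. Setting $\tau:=t-(\alpha+\beta r+\gamma s)$, the cubes $R\in\mc{R}$ live in the slab $\{|\tau|\le CK^{-1/2}\}$, and Taylor expansion gives
$$Q_i(r,s,t)=\tilde{Q}_i(r,s)+\tau L_i(r,s)+\tfrac12\tau^2\partial_t^2 Q_i,$$
where $\tilde{Q}_i(r,s):=Q_i(r,s,\alpha+\beta r+\gamma s)$ is quadratic in $(r,s)$ and $L_i(r,s):=\partial_tQ_i(r,s,\alpha+\beta r+\gamma s)$ is affine. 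The $O(\tau^2)=O(K^{-1})$ remainder lies below the Fourier-uncertainty scale on $B_K$ and can be absorbed into the weight.

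Next I would carry out the cube-local cylindrical approximation alluded to in the introduction. Fix $R\in\mc{R}$ with center $(r_R,s_R,t_R)$. On $R$ we have $|r-r_R|,|s-s_R|=O(K^{-1/2})$, so the affinity of $L_i$ gives $\tau L_i(r,s)=\tau L_i(r_R,s_R)+O(K^{-1})$. Thus, up to $O(K^{-1})$ phase errors absorbable on $B_K$, the extension $E_R^{\mc{S}}g$ agrees with the extension over the cylinder
$$\mc{C}_R:=\{(r,s,\tau,\tilde{Q}_1(r,s)+\tau L_1(r_R,s_R),\tilde{Q}_2(r,s)+\tau L_2(r_R,s_R))\}.$$
This $\mc{C}_R$ is the image of the \emph{standard} cylinder $\mc{C}:=\tilde{\mc S}|_H\times\R e_3\subset\R^5$ under the linear shear $T_R: x_3\mapsto x_3-L_1(r_R,s_R)x_4-L_2(r_R,s_R)x_5$, where
$$\tilde{\mc S}|_H:=\{(r,s,\tilde Q_1(r,s),\tilde Q_2(r,s)):(r,s)\in[0,1]^2\}\subset\R^4$$
is a fixed base $2$-surface, common to all $R$. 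Since $T_R$ has bounded distortion, it preserves $L^p(w_{B_K})$-norms (Proposition~\ref{ff8}).

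The remaining task is a decoupling for the standard cylinder $\mc{C}$. By Fourier-inverting in the extrusion variable $\tau$, the cylinder extension $E_R^{\mc{C}}h$ may be rewritten as a $2$-dimensional extension over $\tilde{\mc S}|_H$ of an $x_3$-dependent profile evaluated at $(x_1,x_2,x_4,x_5)$. Applying the known $l^pL^p$ decoupling for the $2$-surface $\tilde{\mc S}|_H\subset\R^4$ \emph{fiberwise} in $x_3$, and then integrating in $x_3$ against $w_{B_K}$, reduces the cylindrical decoupling to the base one. Lemma~\ref{ ruitgurepod30od=-wowe=-o} guarantees that at least one of $\tilde Q_1,\tilde Q_2$ carries a quadratic coefficient of absolute value at least $\eta>0$, so $\tilde{\mc S}|_H$ is genuinely curved; in the non-degenerate case Bourgain--Demeter gives a base $l^pL^p$ decoupling constant $\lesim K^{(1/2-1/p)+\epsilon}$ for $p\le 6$, and in the degenerate ``cylinder-over-a-parabola'' sub-case one reduces to the $1$D parabola in $\R^3$ via a product-function test in the spirit of the argument around~\eqref{ff4}. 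Either way the bound is strictly smaller than the target $K^{\frac{3}{2}(1/2-1/p)+\epsilon}$ for $p\ge 2$, yielding \eqref{1903e1.39}.

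The main obstacle is Step~2: because the extrusion direction of $\mc{C}_R$ depends on $(r_R,s_R)$, no single cylinder approximates the entire slab within $O(K^{-1})$, and a direct global cylindrical decoupling is unavailable. The remedy, following the approximation argument of \cite{Oh}, is to absorb the cube-dependent shear $T_R$ at the pointwise level on $B_K$ and verify that the resulting cube-wise cylindrical decouplings combine into the desired global estimate; crucially, each $\mc{C}_R$ has the \emph{same} base $\tilde{\mc S}|_H$, so the $l^p$-valued right-hand sides all share a common decoupling constant with no extra loss.
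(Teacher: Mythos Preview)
Your proposal has a genuine gap at exactly the point you flag as an ``obstacle'': the cube-dependent cylinders $\mc{C}_R$ cannot be combined into a single decoupling inequality. The quantity to be controlled is $\|\sum_{R\in\mc R}E_R g\|_{L^p(w_{B_K})}$, and a decoupling must act on this sum as a whole. Even granting the pointwise approximation $E_R g\approx (E_R^{\mc C}g)\circ T_R^{-1}$ on $B_K$, one is left with $\sum_R(E_R^{\mc C}g)\circ T_R^{-1}$, a sum of functions each composed with a \emph{different} bounded linear map; no known cylindrical decoupling applies to such an expression, and the fact that all $\mc C_R$ share the same base $\tilde{\mc S}|_H$ is irrelevant --- what is needed is that the union of the Fourier supports sits in an $O(K^{-1})$-neighborhood of one object. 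Your final paragraph asserts that ``the cube-wise cylindrical decouplings combine'', but gives no mechanism; the reference to \cite{Oh} is not apposite, since that argument iterates scales on a \emph{single} smooth object, not across an $R$-indexed family of shears. A secondary issue: Lemma~\ref{ ruitgurepod30od=-wowe=-o} only guarantees that \emph{one} of $\tilde Q_1,\tilde Q_2$ has a quadratic coefficient bounded below, so $\tilde{\mc S}|_H\subset\R^4$ may fail every nondegeneracy hypothesis needed for an $l^pL^p$ decoupling at the strength you claim (e.g.\ $\tilde Q_2\equiv 0$ is allowed).

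The paper resolves the first issue not by managing cube-dependent shears but by eliminating them. Via the dimension-reduction Lemma~\ref{abc8} one drops, say, $Q_2$ and works with $\mc M_1=\{(r,s,t,Q_1(r,s,t))\}\subset\R^4$. The crucial computation (the two Claims) then shows that the tangent spaces to $\mc M_1$ at \emph{all} points of the base parabola $\B$ (Cases~1,~2) or surface $\mathbb S$ (Case~3) contain a single fixed linear subspace (a plane $\pi$ or a line $L$), independent of $(r,s)$; this is a nontrivial algebraic identity --- see \eqref{fm2} and \eqref{form1} --- that relies on having only one quadratic form in play. Consequently the entire strip lies within $O(K^{-1})$ of a \emph{single} cylinder, and Lemmas~\ref{ww8}, \ref{cyl} apply directly to the sum $\sum_R$. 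This global-cylinder step is precisely the missing idea in your outline.
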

We will only use this result for $p=\frac{14}{3}$. Let us comment on the strength of this result. It is stronger than what we would get by using only trivial decoupling, and by that we refer to Lemma \ref{1903lemma1.12} below. Indeed this lemma gives the poor decoupling  constant $K^{2(\frac{1}{2}-\frac{1}{p})}$,  because it exploits no curvature. 
\medskip

Given a manifold $$\mc{M}:=\{(v,Q(v)):v\in\R^{d}\}$$
associated with $Q:\R^{d}\to \R^{d'}$, its extension operator will be defined as follows
$$E_V^{\mc{M}}g(x,x')=\int_Vg(v)e(xv+x'Q(v))dv.$$
Here $V$ is an arbitrary measurable set in $\R^d$, $g$ is an arbitrary complex valued measurable function on $\R^d$ and $(x,x')\in\R^{d}\times \R^{d'}.$
We recall the following dimension reduction result, which is a small variation of the one from \cite{BDGu}.
\begin{lem}
	\label{abc8}
	Let $p\ge q\ge 2.$
	Let $Q_i:\R^3\to\R$, $i=1,2$ be measurable. Fix  $U_1,\ldots,U_l$, an arbitrary measurable partition of $[0,1]^{3}$ and fix $B$,  an arbitrary measurable subset of $\R^{4}$. For $i=1,2$, let $E^{(i)}=E^{\mc{M}_i}$ denote the extension operators associated with  the manifolds $\mc{M}_i$ defined as follows
	$$\mc{M}_1=\{(u,Q_1(u)):u\in\R^{3}\},$$
	$$\mc{M}_2=\{(u,Q_1(u),Q_2(u)):u\in\R^{3}\}.$$
	Fix a measurable function $h:[0,1]^3\to\C$.
	Let $C$ be a number such that the inequality
	$$\|E^{(1)}_{[0,1]^{3}}\widetilde{h}\|_{L^p(B)}\le C(\sum_i\|E_{U_i}^{(1)}\widetilde{h}\|_{L^p(B)}^q)^{1/q}$$
	holds for all measurable $\widetilde{h}$ such that $|\widetilde{h}|=|h|$.
	
	Then for each measurable set $B'\subset \R$  we have
	$$\|E^{(2)}_{[0,1]^{3}}h\|_{L^p(B\times B')}\le C(\sum_i\|E_{U_i}^{(2)}h\|_{L^p(B\times B')}^q)^{1/q}.$$
\end{lem}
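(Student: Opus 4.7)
The plan is to ``freeze'' the last spectral variable $x_5$ and reduce to the four-dimensional inequality via a frequency modulation trick. For each fixed $x_5\in\R$, I would define the modulated function
$$\tilde{h}_{x_5}(u):=h(u)\,e(x_5 Q_2(u)).$$
Note that $|\tilde{h}_{x_5}|=|h|$ pointwise in $u$, so $\tilde h_{x_5}$ is an admissible test function for the hypothesis. A direct computation gives
$$E^{(2)}_V h(x,x_4,x_5)=E^{(1)}_V \tilde{h}_{x_5}(x,x_4)$$
for every measurable $V\subset[0,1]^3$, which is the key identity that converts the five-dimensional problem into the four-dimensional one at each slice $x_5=\mathrm{const}$.

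With this identity in hand, for each fixed $x_5$ I would apply the hypothesized inequality to $\tilde h_{x_5}$ to obtain
$$\|E^{(2)}_{[0,1]^3}h(\cdot,\cdot,x_5)\|_{L^p(B)}\le C\Bigl(\sum_i \|E^{(2)}_{U_i}h(\cdot,\cdot,x_5)\|_{L^p(B)}^q\Bigr)^{1/q}.$$
Raising both sides to the $p$-th power and integrating over $x_5\in B'$, Fubini's theorem identifies the left-hand side as $\|E^{(2)}_{[0,1]^3}h\|_{L^p(B\times B')}^p$. For the right-hand side, set $a_i(x_5):=\|E^{(2)}_{U_i}h(\cdot,\cdot,x_5)\|_{L^p(B)}$. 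Since $p\ge q$, the exponent $p/q\ge 1$ and Minkowski's inequality applied to the functions $a_i^q$ in $L^{p/q}(B')$ yields
$$\int_{B'}\Bigl(\sum_i a_i(x_5)^q\Bigr)^{p/q}dx_5\le \Bigl(\sum_i\Bigl(\int_{B'} a_i(x_5)^p\,dx_5\Bigr)^{q/p}\Bigr)^{p/q}.$$
A second application of Fubini rewrites $\int_{B'} a_i(x_5)^p\,dx_5$ as $\|E^{(2)}_{U_i}h\|_{L^p(B\times B')}^p$, and extracting a $p$-th root then delivers the claimed inequality.

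There is no genuine obstacle in this argument; the only delicate point, and the reason for the seemingly awkward hypothesis phrased in terms of all $\tilde h$ with $|\tilde h|=|h|$, is that the function to which the four-dimensional inequality is applied is the modulated $\tilde h_{x_5}$, whose phase depends on $x_5$. That freedom in the hypothesis is exactly what permits the slice-wise application and the subsequent Minkowski step in the ratio $p/q\ge 1$.
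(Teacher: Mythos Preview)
Your argument is correct and is precisely the standard modulation-plus-Minkowski proof one expects here. The paper does not actually supply its own proof of this lemma; it merely recalls the result as ``a small variation of the one from \cite{BDGu}'', and your write-up is exactly that variation.
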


We will also need the following instances of cylindrical decouplings.
\begin{lem}
	\label{ww8}
	Consider the curve $\gamma$ in the $(u_1,u_2)$-plane
	$$\gamma:=\{(u_1,\psi(u_1)):\;|u_1|\lesssim 1\}.$$
	We assume $|\psi''|\sim 1.$ For $K\gg 1$, let $I_1,I_2,\ldots$ be a partition of $|u_1|\lesssim 1$ using intervals of length $\sim K^{-1/2}$. Partition the $O(K^{-1})$-neighborhood of $\gamma$ into sets $R_i$, each of which is an $O(K^{-1})$ neighborhood of $I_i$. Note that each $R_i$ looks like a $\sim K^{-1/2}\times K^{-1}$ rectangle. For each $R_i$ consider the vertical region $P_i$ in $\R^4$ defined as follows
	$$P_i=\{(u_1,u_2,u_3,u_4):\;(u_1,u_2)\in R_i,\;u_3,u_4\in\R\}.$$
	For each $f:\R^4\to\C$ with Fourier transform supported in $\cup_i P_i$, we will define the Fourier restriction $f_{P_i}$ of $f$ to $P_i$ by
	$$\widehat{f_{P_i}}=\widehat{f}1_{P_i}.$$
	Then for each $2\le p\le 6$, each such $f$ and each $B_K$ in $\R^4$ we have
	$$\|f\|_{L^p(w_{B_K})}\lesssim_\epsilon K^{\epsilon}(\sum_{i}\|f_{P_i}\|_{L^p(w_{B_K})}^2)^{1/2}$$
	and
	\beq
	\label{ww13}
	\|f\|_{L^p(w_{B_K})}\lesssim_\epsilon K^{\frac14-\frac1{2p}+\epsilon}(\sum_{i}\|f_{P_i}\|_{L^p(w_{B_K})}^p)^{1/p}.
	\endeq
	
\end{lem}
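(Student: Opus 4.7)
The first inequality is the standard $\ell^2$ decoupling for a smooth planar curve of non-vanishing curvature, lifted by cylindrical extension from $\R^2$ to $\R^4$. The plan is to reduce it to the two-dimensional Bourgain--Demeter decoupling for the curve $\gamma$, which applies since $|\psi''|\sim 1$, and then integrate over the extra two variables. First I would dominate the 4D weight by a tensor product of 2D weights, $w_{B_K}(u_1,u_2,u_3,u_4)\lesim w_{B_K^{(12)}}(u_1,u_2)\,w_{B_K^{(34)}}(u_3,u_4)$, where $B_K^{(12)}$ and $B_K^{(34)}$ are 2D balls of radius $K$ in the indicated coordinate planes; this is legitimate up to a harmless adjustment of the decay exponent $C$.

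Next, for each fixed $(u_3,u_4)$ I would consider the 2D slice $F_{u_3,u_4}(u_1,u_2):=f(u_1,u_2,u_3,u_4)$. Because $P_i$ is the cylinder $R_i\times\R^2$, a routine Fubini computation gives that the 2D Fourier transform of $F_{u_3,u_4}$ is supported in $\bigcup_i R_i$, and its 2D cap-projection onto $R_i$ is exactly the $(u_3,u_4)$-slice of $f_{P_i}$. I would then apply the 2D Bourgain--Demeter $\ell^2$ decoupling theorem (at the weighted level) to $F_{u_3,u_4}$, raise to the $p$-th power, integrate against $w_{B_K^{(34)}}$, and invoke Minkowski's inequality (valid since $p/2\ge 1$) to swap the $\ell^2$ sum with the $L^{p/2}$ integral. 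Reassembling the product weight into $w_{B_K}$ then yields the first inequality.

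The second inequality follows from the first by a single application of H\"older's inequality on the counting sum, whose length is $\sim K^{1/2}$:
\[
\Bigl(\sum_i a_i^2\Bigr)^{1/2}\lesim K^{\frac12(\frac12-\frac1p)}\Bigl(\sum_i a_i^p\Bigr)^{1/p}=K^{\frac14-\frac1{2p}}\Bigl(\sum_i a_i^p\Bigr)^{1/p},
\]
and the claimed constant $K^{1/4-1/(2p)+\epsilon}$ drops out. There is no real obstacle in this argument: it is pure cylindrical extension of a known decoupling, the only point requiring mild care being the transfer of the Fourier support condition to each planar slice, which is transparent from the product structure $P_i=R_i\times\R^2$.
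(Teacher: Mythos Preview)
Your proposal is correct and matches the paper's own proof essentially line for line: the paper says the first inequality ``follows immediately by applying Theorem 1.1 from \cite{BD1} \dots\ combined with a standard Fubini-type argument,'' which is exactly the cylindrical slicing plus Minkowski step you describe, and the second inequality ``follows from the first one combined with H\"older,'' which is your final paragraph.
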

\begin{proof}
	The first inequality follows immediately by applying Theorem 1.1 from \cite{BD1} (in the form from Section 7)  combined with a standard Fubini-type argument. The second one follows from the first one combined with H\"older.
	
\end{proof}

\begin{lem}
	\label{cyl}
	Consider the surface $\Lambda$ where
	$$\Lambda:=\{(u_1,u_2,\psi(u_1,u_2)):\;|u_1|,|u_2|\lesssim 1\}.$$
	We assume $|D^2(\psi)|\sim 1$ where $D^2(\psi)$ is the Hessian of $\psi$. For $K\gg 1$, let $I_1,I_2,\ldots$ be a partition of $|u_1|,|u_2|\lesssim 1$ using squares of side length $\sim K^{-1/2}$. Partition the $O(K^{-1})$-neighborhood of $\Lambda$ into sets $R_i$, each of which is an $O(K^{-1})$ neighborhood of $I_i$. Note that each $R_i$ looks like a $\sim K^{-1/2}\times K^{-1/2} \times K^{-1}$ rectangular box. For each $R_i$ consider the vertical region $P_i$ in $\R^4$ defined as follows
	$$P_i=\{(u_1,u_2,u_3,u_4):\;(u_1,u_2,u_3)\in R_i,\;u_4\in\R\}.$$
	For each $f:\R^4\to\C$ with Fourier transform supported in $\cup_i P_i$, we will define the Fourier restriction $f_{P_i}$ of $f$ to $P_i$ by
	$$\widehat{f_{P_i}}=\widehat{f}1_{P_i}.$$
	Then for each $p\geq 4$, each such $f$ and each $B_K$ in $\R^4$ we have
	\beq
	\label{ww14bju9fgjb89tibv90rt890u}
	\|f\|_{L^p(w_{B_K})}\lesssim_\epsilon K^{1-\frac3{p}+\epsilon}(\sum_{i}\|f_{P_i}\|_{L^p(w_{B_K})}^p)^{1/p}.
	\endeq
	
\end{lem}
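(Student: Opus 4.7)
The plan is to reduce to the base three-dimensional decoupling for the two-dimensional surface $\Lambda$ via a cylindrical Fubini argument, and then invoke sharp Bourgain--Demeter decoupling. Since each $P_i = R_i \times \R$ extends $R_i$ trivially in the $u_4$-direction, for every fixed $x_4$ the three-dimensional Fourier slice of $f(\cdot, x_4)$ is supported in $\cup_i R_i$. The four-dimensional weight $w_{B_K}$ can be dominated, up to admissible polynomial factors, by an average of products of a three-dimensional weight in $(x_1,x_2,x_3)$ with a one-dimensional weight in $x_4$, each localized at scale $K$. Applying the three-dimensional decoupling for $\Lambda$ slice-by-slice and integrating back in $x_4$ via Fubini/Minkowski then yields the four-dimensional cylindrical decoupling with essentially the same constant. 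This is the same Section~7 style reduction from \cite{BD1} that the authors already invoke for Lemma \ref{ww8}.

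For the base surface in $\R^3$ (with $|D^2\psi|\sim 1$, so non-degenerate Gaussian curvature), the sharp $l^p L^p$ decoupling constant at scale $K^{-1/2}$ is $\max(K^{1/2 - 1/p},\,K^{1 - 4/p})\,K^\epsilon$. The first term arises from the Bourgain--Demeter $l^2 L^p$ decoupling (which has constant $K^\epsilon$ for $p \le p_c = 4$, the critical exponent for a 2D surface in $\R^3$) composed with H\"older on the $l^2 \to l^p$ side, which loses $(\#R_i)^{1/2 - 1/p} = K^{1/2 - 1/p}$ since there are $\sim K$ caps $R_i$; the second term is the sharp $l^pL^p$ bound in the super-critical regime $p\ge 4$. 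For $p \ge 4$ both terms are bounded by $K^{1 - 3/p}$: indeed $K^{1/2 - 1/p} \le K^{1 - 3/p}$ reduces to $2/p \le 1/2$, i.e., $p \ge 4$, while $K^{1 - 4/p} \le K^{1 - 3/p}$ is trivial. Composed with the cylindrical reduction this produces the claim.

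The main technical obstacle is the weight bookkeeping in the cylindrical Fubini reduction, since $w_{B_K}$ is not a genuine tensor product across $(x_1,x_2,x_3)$ and $x_4$; however, the standard trick of dominating $w_{B_K}$ by an admissible average of product weights with rapidly-decaying factors handles this cleanly. As a conceptually streamlined alternative, one may prove only the endpoint $p=4$ case by the above lifting (giving constant $K^{1/4 + \epsilon}$), and then complex-interpolate the decoupling operator $(f_{P_i}) \mapsto \sum_i f_{P_i}$ with the trivial $p = \infty$ endpoint (triangle inequality over the $\sim K$ caps, giving $K$): the interpolated exponent $(1/4)\cdot(4/p) + 1\cdot(1 - 4/p) = 1 - 3/p$ matches the claim on the nose.
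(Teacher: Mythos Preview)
Your approach is essentially the same as the paper's: reduce to the three-dimensional $l^pL^p$ decoupling for the surface $\Lambda\subset\R^3$ via a Fubini argument in the cylindrical $u_4$ direction, then quote the sharp result for hypersurfaces with nonzero Gaussian curvature (this is exactly the content of Theorem~1.1 in \cite{BD10}, which the paper cites). So the strategy is right.

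There is, however, a slip in your exponents that you should fix. For a $2$-dimensional surface in $\R^3$ the universal $l^pL^p$ decoupling constant is
\[
\max\big(K^{\frac12-\frac1p},\,K^{1-\frac{3}{p}}\big)\,K^\epsilon,
\]
not $\max(K^{1/2-1/p},K^{1-4/p})$ as you wrote; the second exponent is $\frac{d}{2}-\frac{n}{p}$ with $d=2$, $n=3$, hence $1-\frac3p$. Your version would be strictly \emph{smaller} than the true decoupling constant for every $p>4$ (e.g.\ at $p=6$ you claim $K^{1/3}$ but the sharp constant is $K^{1/2}$), so it cannot be a valid upper bound. Once the exponent is corrected, the two branches of the $\max$ coincide at $p=4$ and the second dominates for $p\ge 4$, giving $K^{1-3/p+\epsilon}$ on the nose without any further inequality---there is nothing to ``bound above by $K^{1-3/p}$''. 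Your alternative endpoint-plus-interpolation argument (the $p=4$ case giving $K^{1/4+\epsilon}$ interpolated against the trivial $p=\infty$ bound $K$) is correct and gives the same conclusion cleanly.
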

\begin{proof}
	The inequality follows immediately by applying Theorem 1.1 from \cite{BD10} combined with a standard Fubini-type argument.
	
\end{proof}

\begin{rem}
	It may help to realize that the Fourier transform of the function $f$ from the lemmas is supported in the $O(K^{-1})$-neighborhood of the cylinder
	$$Cyl=\{(u_1,\psi(u_1),u_3,u_4),\;|u_1|\lesssim 1,\;u_3,u_4\in\R\}.$$
	and
	$$Cyl=\{(u_1,u_2,\psi(u_1,u_2),u_4),\;|u_1|,|u_2|\lesssim 1,\;u_4\in\R\},$$
	respectively.
	These cylinders are obtained in the first case by attaching to each point $(u_1,\psi(u_1),0,0)$, the plane  $\pi$   spanned by $(0,0,1,0)$ and $(0,0,0,1)$, while in the second case by attaching to each point $(u_1,u_2,\psi(u_1,u_2),0)$ the line $L$ spanned by $(0,0,0,1)$. We will call this plane (line) the ``vertical component" of $Cyl$.
	
	The results of the lemmas remain true if the  cylinder is replaced with any of its rigid motions.
\end{rem}

\bigskip

We will now start the proof of Theorem \ref{1903lemma1.14}.
\medskip

By symmetry, we could assume our plane is given by $t=\alpha+\beta r+\gamma s$ for some $\alpha,\beta,\gamma= O(1)$. And without loss of generality, we could also assume that $$\widetilde{Q}_1 (r,s):= Q_1(r,s,\alpha+\beta r+\gamma s)=a r^2+b s^2+c rs + L(r,s),$$ with $\max\{|a|,|b|,|c|\}>\eta$ and $L$  affine. The value of $L$ is irrelevant (it never influences the curvature) and can be discarded.  Now we will analyze three cases. Let us start by briefly explaining the third case, which is conceptually the easiest. When the quantity $c^2-4ab$ is away from zero, we can view the relevant manifold (living in $\R^4$) as being close to a cylinder over a two dimensional surface (lying inside a three dimensional space). The requirement on $c^2-4ab$ being nonzero is equivalent to the non degeneracy of the cylinder. We will then combine the well established decoupling theory for surfaces\footnote{What matters in all three cases is that at least one principal curvature of the surface is away from zero. We can afford to perform a trivial decoupling in the direction corresponding to small curvature} with the cylindrical decoupling from Lemma \ref{cyl}. On the other hand, when $c^2-4ab=0$ the cylinder is degenerate, it lives inside a copy of $\R^3$. We will then essentially view it as a cylinder over a curve, and will instead invoke Lemma \ref{ww8}. \\\

Case 1. Assume $|a|> \eta/4$. Suppose $Q_1 (r,s,t)= Ar^2 +B s^2 +C t^2 +D rs + E rt + F st$ for some $A,B,C,D,E,F \in \R$. Then by a direct computation,
\beq
\label{a}
a=A+C\beta ^2 +E\beta .
\endeq

Tile the unit square $\{(r,s)\in[0,1]\times [0,1]\}$ with $K^{1/2}-$squares, and call this collection $\mc{R}_{tile}$. By allowing another $O(1)$ loss, we may in fact assume that there is at most one $R\in \mc{R}$ whose $(r,s)-$projection is any given  square in $\mc{R}_{tile}$.

Let
$$h=g\sum_{R\in\mc{R}}1_R.$$
With this in mind, it suffices to prove that for $2\le p\le 6$ (note that in this case we can afford a more generous range than $4\le p\le 6$)
$$\|E_{[0,1]^3}h\|_{L^p(w_{B_K})}\lesim_\epsilon K^{\frac{3}{2}(\frac12-\frac{1}{p})+\epsilon} (\sum_{I,J\subset [0,1]\atop{|I|=|J|=K^{-1/2}}} \|E_{J\times I\times [0,1]}h\|_{L^p(w_{B_K})}^p)^{1/p}.$$

By invoking Lemma \ref{abc8}, it will suffice to prove the following inequality for each $\widetilde{h}$ with $|\widetilde{h}|=|h|$
\beq
\label{ww6}
\|E_{[0,1]^3}^{(1)}\widetilde{h}\|_{L^p(w_{B_K})}\lesim_\epsilon K^{\frac{3}{2}(\frac12-\frac{1}{p})+\epsilon} (\sum_{I,J\subset [0,1]\atop{|I|=|J|=K^{-1/2}}} \|E^{(1)}_{J\times I\times [0,1]}\widetilde{h}\|_{L^p(w_{B_K})}^p)^{1/p},
\endeq
where $E^{(1)}$ is the extension operator for the manifold
$$\mc{M}_1=\{(r,s,t,Q_1(r,s,t)),\;(r,s,t)\in\R^3\}.$$
Of course, our estimates need to be uniform over $\alpha,\beta,\gamma$.
As a first step towards proving \eqref{ww6}, we perform a trivial decoupling in the $s$ direction (Lemma \ref{1903lemma1.12}), to write for each $p\ge 2$
$$\|E^{(1)}_{[0,1]^3}\widetilde{h}\|_{L^p(w_{B_K})}\lesim K^{\frac{1}{2}-\frac{1}{p}} (\sum_{I\subset [0,1]\atop{|I|=K^{-1/2}}} \|E^{(1)}_{[0,1]\times I\times [0,1]}\widetilde{h}\|_{L^p(w_{B_K})}^p)^{1/p}.$$

Fix $I=[s_0,s_0+K^{-1/2}]$ from the summation. It remains to prove the following inequality, for $2\le p\le 6$
\beq
\label{ww5}
\|E^{(1)}_{[0,1]\times I\times [0,1]}\widetilde{h}\|_{L^p(w_{B_K})}\lesssim_{\epsilon} K^{\frac14-\frac1{2p}+\epsilon}(\sum_{J\subset [0,1]\atop{|J|=K^{-1/2}}}\|E^{(1)}_{J\times I\times [0,1]}\widetilde{h}\|_{L^p(w_{B_K})}^p)^{1/p}.
\endeq
Consider the following strip on $\mc{M}_1$
$$\mc{M}_{1,I}=\{(r,s,t,Q_1(r,s,t)):\;(r,s,t)\in [0,1]\times I\times [0,1],\;|t-(\alpha+\beta r+\gamma s)|\lesssim K^{-1/2}\}.$$
This lies in the $O(K^{-1/2})$-neighborhood of the parabola
$$\B=\{(r,s_0,\alpha+\beta r +\gamma s_0, ar^2+b s_0 ^2+ crs_0):\;r\in[0,1]\},$$
whose curvature satisfies
\beq
\label{ww11}
\kappa\sim 1.
\endeq
because $|a|$ is away from zero by assumption.
This parabola lies in a translate of the plane spanned by
$$w_1=(1,0,\beta,0)$$
and
$$w_2=(0,0,0,1).$$
Tile $\mc{M}_{1,I}$ with caps
$$\mc{M}_{1,I,J}=\{(r,s,t,Q_1(r,s,t)):\;(r,s,t)\in J\times I\times [0,1],\;|t-(\alpha+\beta r+\gamma s)|\lesssim K^{-1/2}\}.$$
Note that the Fourier transform of $E^{(1)}_{J\times I\times [0,1]}\widetilde{h}$ is supported on the cap $\mc{M}_{1,I,J}$. By loosing $O(1)$ we may assume that the $K^{-1}$-neighborhoods $N_{I,J}$ of these caps are pairwise disjoint. Thus \eqref{ww5} will follow if we prove that for each $f$ Fourier supported in $\cup_{J}N_{I,J}$ we have
\beq
\label{ww12}
\|f\|_{L^p(w_{B_K})}\lesssim_{\epsilon} K^{\frac14-\frac1{2p}+\epsilon}(\sum_{J\subset [0,1]\atop{|J|=K^{-1/2}}}\|f_{N_{I,J}}\|_{L^p(w_{B_K})}^p)^{1/p},
\endeq
where $f_{N_{I,J}}$ is the Fourier restriction of $f$ to $N_{I,J}$.
\medskip

In order to prove \eqref{ww12} we need to  prove the following claim.
\begin{claim}
	The strip $\mc{M}_{1,I}$ lies within $O(K^{-1})$ from a cylinder like the one from Lemma \ref{ww8} (modulo rigid motions).
\end{claim}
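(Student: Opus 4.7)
The plan is to parametrize $\mc{M}_{1,I}$ by the ``long'' variable $r\in [0,1]$ and the two ``small'' parameters $s':=s-s_0$ and $\mu:=t-\alpha-\beta r-\gamma s$, both of size $O(K^{-1/2})$ on $\mc{M}_{1,I}$. Taylor-expanding $Q_1$ and keeping only the first-order terms in $(s',\mu)$---the quadratic-in-$(s',\mu)$ remainders being $O(K^{-1})$---I would then apply a sequence of invertible linear changes of variables in $\R^4$ that put the defining equation into the cylindrical form $u_4'=\phi(Z_1)+O(K^{-1})$, with $\phi$ the quadratic defining $\B$. The algebraic core is completing the square in $r$, which is available because $|a|>\eta/4$.

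Carrying out the Taylor expansion (and dropping the harmless affine term $L$ as noted earlier in the section), one obtains
\[
u_4 \;=\; \phi(r) + (P_0+cr)s' + (M_0+M_1 r)\mu + O(K^{-1}),
\]
where $\phi(r)=ar^2+crs_0+bs_0^2$, $M_1=2C\beta+E$, and $P_0, M_0\in\R$ are explicit constants. A linear shift $u_4\mapsto u_4-P_0 s'-M_0\mu$ eliminates the $r$-independent linear pieces and leaves the bilinear obstruction $rW$, where $W:=cs'+M_1\mu$ has size $|W|\lesssim K^{-1/2}$. Setting $Z_1:=r+W/(2a)$ and using the identity
\[
\phi(r)+rW \;=\; \phi(Z_1) - \tfrac{cs_0}{2a}W - \tfrac{W^2}{4a},
\]
the term $W^2/(4a)$ is $O(K^{-1})$, and the residual linear-in-$W$ term is absorbed by a further shift $u_4\mapsto u_4+\tfrac{cs_0}{2a}W$. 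In the new coordinates $(Z_1,s',W,u_4')$ the equation becomes $u_4'=\phi(Z_1)+O(K^{-1})$, with $s'$ and $W$ ranging in $[-CK^{-1/2},CK^{-1/2}]$ and $Z_1$ in an interval comparable to $[0,1]$. This set is the $O(K^{-1})$-neighborhood of the cylinder over the parabola $\{u_4'=\phi(Z_1)\}$ (with $|\phi''|=|2a|\sim 1$) extended by the 2-plane spanned by $s'$ and $W$---precisely the configuration of Lemma~\ref{ww8}.

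The conceptual obstacle is that the naive first-order picture of $\mc{M}_{1,I}$ has tangent 2-planes transverse to $\B$ that rotate linearly with $r$, an $O(K^{-1/2})$ effect apparently incompatible with $O(K^{-1})$-cylindricity; the shear $Z_1=r+W/(2a)$ compensates for this rotation exactly, and is only available because $a\ne 0$. Two technical subcases deserve mention: if $M_1=0$ but $c\ne 0$ one takes $W:=cs'$ and the argument is unchanged, while if $c=M_1=0$ then $W\equiv 0$ and $\mc{M}_{1,I}$ is already cylindrical over $\phi$ after the first shift. Lastly, the composition of linear changes of variables used here is not literally a rigid motion, but the decoupling inequality of Lemma~\ref{ww8} is invariant under such linear reparametrizations (with the shear factors absorbed by implicit constants), so the cylinder obtained is ``like the one from Lemma~\ref{ww8} modulo rigid motions'' in the sense the claim requires.
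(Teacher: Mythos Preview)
Your argument is correct but takes a genuinely different route from the paper's.

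The paper works in the original coordinates: it computes the tangent 3-space $T_r$ to $\mc{M}_{1,I}$ at each point of the base parabola $\B$, and shows that all these $T_r$ contain a \emph{fixed} 2-plane $\pi$, spanned by explicit vectors $u_3,u_4$. The key step is verifying that $\pi$ is transverse to the plane of $\B$ via the $4\times 4$ determinant $|\det[w_1,w_2,u_3,u_4]|$, which requires a subcase split (either $|2A+E\beta|>\eta/4$ or $|2C\beta^2+E\beta|>\eta/4$, one of which follows from $|a|>\eta/4$). Second-order Taylor then gives the $O(K^{-1})$ approximation. Your route instead absorbs the rotating tangent planes algebraically: Taylor-expand in the two short directions, then complete the square in $r$ via $Z_1=r+W/(2a)$ (available precisely because $|a|>\eta/4$). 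This avoids the subcase split entirely and is computationally lighter; the paper's version is more geometric and makes the cylinder (base $\B$, fiber $\pi$) explicit in the ambient $\R^4$.

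Two loose ends in your write-up deserve tightening. First, when $M_1=0$ the map $(r,s',\mu)\mapsto(Z_1,s',W)$ is not invertible (since $W=cs'$), and your stated remedy ``take $W:=cs'$'' does not repair this; you should instead retain $\mu$ as a free coordinate, i.e.\ use $(Z_1,W,\mu)$, whose Jacobian has determinant $c\ne 0$. Second, your closing sentence says Lemma~\ref{ww8} is ``invariant under linear reparametrizations''. That is true for the downstream decoupling, but the claim as stated asks for a cylinder \emph{modulo rigid motions}. What your computation actually yields is that $\mc{M}_{1,I}$ lies within $O(K^{-1})$ of $T^{-1}(\{u_4'=\phi(Z_1)\})$, where $T$ is your composite affine map. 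Since $T$ has $O(1)$ distortion (the only possible degeneracy is $a\to 0$, which is excluded), this preimage is itself a cylinder with base $\B$ and a fixed fiber 2-plane transverse to the plane of $\B$; one rigid motion then puts it in the form of Lemma~\ref{ww8} with $|\psi''|\sim 1$. Saying this explicitly, rather than appealing to invariance of the lemma, would match the claim as written.
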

\begin{proof}(of the claim)
	The tangent space $T_r$ to $\mc{M}_{1,I}$ at the point from $\B$ indexed by $r$ is spanned by the vectors
	$$v_1=(1,0,0,2Ar+Ds_0+Et)=(1,0,0,(2A+E\beta)r+Ds_0+E\gamma s_0 +E\alpha)$$
	$$v_2=(0,1,0,2Bs_0+Dr+Ft)=(0,1,0,(D+F\beta)r+2Bs_0+F\gamma s_0 +F\alpha)$$
	and
	$$v_3=(0,0,1,2Ct+Er+Fs_0)=(0,0,1,(E+2C\beta)r+Fs_0+2C\gamma s_0 +2C\alpha).$$
	
	Recall that $|a|=|A+C\beta ^2 +E\beta|>\eta /4.$ Therefore, by the triangle inequality, either $|2A+E\beta |>\eta/4$ or $|2C\beta^2 +E\beta|>\eta/4$. So we split into two cases here.
	\medskip
	
	First, assume $|2A+E\beta |>\eta/4$. Note that for each $r$, $T_r$ contains the fixed plane $\pi$ spanned by
	\beq
	\begin{split}
		u_3=& (-(D+F\beta),2A+E\beta,0,\\
		& -(D+F\beta)(Ds_0+E\gamma s_0 +E\alpha)+ (2A+E\beta)(2Bs_0+F\gamma s_0 +F\alpha))
	\end{split}
	\endeq
	and
	\beq
	\begin{split}
		& u_4= (-(E+2C\beta),0,2A+E\beta, \\
		& -(E+2C\beta)(Ds_0+E\gamma s_0 +E\alpha)+(2A+E\beta)(Fs_0 +2C\gamma s_0 +2C\alpha)).
	\end{split}
	\endeq
	Consider the cylinder $Cyl$ in $\R^4$ obtained by attaching the plane $\pi$ to each point of the parabola $\B$. In other words, $\pi$ will be the ``vertical component" of $Cyl$. In general, the plane $\pi$ is not perpendicular to the plane of the parabola. However, since
	\beq
	\label{fm2}|\det[w_1,w_2,u_3,u_4]|=
	|2A+E\beta||2A+2C\beta ^2 +2E\beta|>\eta/4 \times \eta/2
	\endeq
	is away from zero, the cylinder is non-degenerate. Its cross section with the plane $\pi^{\perp}$ is the projection of $\B$ onto $\pi^{\perp}$. Due to \eqref{ww11} and \eqref{fm2} this projection will be a curve given by $u_2=\psi(u_1)$, with $|\psi''|\sim 1$, for some appropriate orthonormal basis $(u_1,u_2)$ in $\pi^{\perp}$. In other words, $Cyl$ is a cylinder like the one in Lemma \ref{ww8}, modulo a rigid motion. Taylor's approximation of second order finishes the proof of the claim in this case, as $\mc{M}_{1,I}$ lies within $O(K^{-1/2})$ from $\B$.
	\medskip
	
	In the second case, assume $|2C\beta^2 +E\beta|>\eta/4$. The proof is similar to the first case, but this time we use
	\beq
	\begin{split}
		u_3=&\Big(E+2C\beta,0,-(2A+E\beta),\\
		& (E+2C\beta)(Ds_0+E\gamma s_0 +E\alpha)- (2A+E\beta)(Fs_0 +2C\gamma s_0 +2C\alpha)\Big)
	\end{split}
	\endeq
	and
	\beq
	\begin{split}
		& u_4= \Big(0,E\beta + 2C\beta ^2,-(D\beta+F\beta ^2), \\
		& (E\beta+2C\beta ^2)(2Bs_0+F\gamma s_0 +F\alpha)-(D\beta+F\beta ^2)(Fs_0 +2C\gamma s_0 +2C\alpha)\Big).
	\end{split}
	\endeq
	
\end{proof}

It follows that $\cup_JN_{I,J}$ lies in the $O(K^{-1})$-neighborhood of $Cyl$.
Let now $P_1,P_2,\ldots$ be the partition of this neighborhood like in Lemma \ref{ww8}. By choosing $P_i$ wide enough (still of order $O(K^{-1/2})$) we may arrange so that each $N_{I,J}$ is inside some $P_i$ and moreover, each $P_i$ contains at most one $N_{I,J}$. This can be seen via simple geometry, using the orientation of $Cyl$.

Thus, if $f$ is Fourier supported in $\cup_JN_{I,J}$, it is automatically Fourier supported in $\cup_iP_i$ and moreover
$$f_{N_{I,J}}=f_{P_i}$$
whenever $N_{I,J}\subset P_i$. With all these observations, inequality \eqref{ww12} is an immediate consequence of \eqref{ww13}. This finishes the analysis of Case 1.
\medskip

Case 2. Assume $|b|>\eta /4$. Then the proof is similar to the proof of Case 1 with the role of $r,s$ swapped.
\medskip

Case 3. Since we are not in Case 1 or 2, we may assume that $|a|\leq \eta /4, |b| \leq \eta/4, |c|>\eta$. Suppose $Q_1 (r,s,t)= Ar^2 +B s^2 +C t^2 +D rs + E rt + F st$ for some $A,B,C,D,E,F \in \R$. Then by a direct computation,
\beq
\label{c}
c=2\beta \gamma C+D+E\gamma + F\beta, \;\;a=A+ C\beta ^2 + E \beta ,\;\; b=B+C\gamma ^2+F \gamma.
\endeq

Our approach here is similar to what we did before, we will use a cylindrical decoupling. But this time, the base would be a two dimensional surface in $\R ^3$ with nonzero Gaussian curvature. \\\

Tile the unit square $\{(r,s)\in[0,1]\times [0,1]\}$ with $K^{1/2}-$squares, and call this collection $\mc{R}_{tile}$. By allowing another $O(1)$ loss, we may in fact assume that there is at most one $R\in \mc{R}$ whose $(r,s)-$projection is any given  square in $\mc{R}_{tile}$.
Let
$$h=g\sum_{R\in\mc{R}}1_R.$$
We will prove that for each $p\geq 4$,
\beq
\label{ww18}
\|E_{[0,1]^3}h\|_{L^p(w_{B_K})}\lesim_\epsilon K^{1-\frac3{p}+\epsilon} (\sum_{I,J\subset [0,1]\atop{|I|=|J|=K^{-1/2}}} \|E_{I\times J\times [0,1]}h\|_{L^p(w_{B_K})}^p)^{1/p}.
\endeq

By invoking Lemma \ref{abc8}, it will suffice to prove the following inequality for each $\widetilde{h}$ with $|\widetilde{h}|=|h|$
\beq
\label{ww15}
\|E_{[0,1]^3}^{(1)}\widetilde{h}\|_{L^p(w_{B_K})}\lesim_\epsilon K^{1-\frac3{p}+\epsilon} (\sum_{I,J\subset [0,1]\atop{|I|=|J|=K^{-1/2}}} \|E^{(1)}_{I\times J\times [0,1]}\widetilde{h}\|_{L^p(w_{B_K})}^p)^{1/p},
\endeq
where $E^{(1)}$ is the extension operator for the manifold
$$\mc{M}_1=\{(r,s,t,Q_1(r,s,t)),\;|t-(\alpha+\beta r+\gamma s)|\lesssim K^{-1/2})\}.$$

Note that $\mc{M}_1$ lies in the $O(K^{-1/2})$-neighborhood of the surface
$$\mathbb{S}=\{(r,s,\alpha+\beta r +\gamma s, ar^2+b s ^2+ crs):\;r,s\in[0,1]\},$$
whose Gaussian curvature satisfies
\beq
\label{ww16}
\kappa\sim 1,
\endeq
since $|c^2-4ab|\geq |c|^2-4 |a||b|\geq \eta ^2 -4 \eta/4 \times \eta /4=3\eta^2 /4$ is away from zero by assumption.
This surface lies in a translate of the three dimensional space spanned by
$$w_1=(1,0,\beta,0),$$
$$w_2=(0,1,\gamma,0),$$
and
$$w_3=(0,0,0,1).$$
Tile $\mc{M}_{1}$ with caps
$$\mc{M}_{1,I,J}=\{(r,s,t,Q_1(r,s,t)):\;(r,s,t)\in I\times J\times [0,1],\;|t-(\alpha+\beta r+\gamma s)|\lesssim K^{-1/2}\}.$$
Note that the Fourier transform of $E^{(1)}_{I\times J\times [0,1]}\widetilde{h}$ is supported on the cap $\mc{M}_{1,I,J}$. By loosing $O(1)$ we may assume that the $K^{-1}$-neighborhoods $N_{I,J}$ of these caps are pairwise disjoint. Thus (\ref{ww15}) will follow if we prove that for each $f$ Fourier supported in $\cup_{I,J}N_{I,J}$ we have
\beq
\label{ww17}
\|f\|_{L^p(w_{B_K})}\lesssim_{\epsilon} K^{1-\frac3{p}+\epsilon}(\sum_{I,J\subset [0,1]\atop{|I|=|J|=K^{-1/2}}}\|f_{N_{I,J}}\|_{L^p(w_{B_K})}^p)^{1/p},
\endeq
where $f_{N_{I,J}}$ is the Fourier restriction of $f$ to $N_{I,J}$.
\medskip

In order to prove (\ref{ww17}) we need to  prove the following claim.
\begin{claim}
	$\mc{M}_{1}$ lies within $O(K^{-1})$ from a cylinder like the one from Lemma \ref{cyl} (modulo rigid motions).
\end{claim}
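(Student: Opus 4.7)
The plan is to exhibit a fixed direction $L\in\R^4$ (transverse to the tangent plane of $\mathbb{S}$) such that the cylinder $\mathbb{S}+\R L$ approximates $\mc{M}_1$ to within $O(K^{-1})$. Parametrize $\mc{M}_1$ by $(r,s,\tau)$ with $t=\alpha+\beta r+\gamma s+\tau$ and $|\tau|\lesssim K^{-1/2}$. Taylor-expanding the fourth coordinate in $\tau$ yields
$$Q_1(r,s,t_0+\tau)=\tilde Q_1(r,s)+\tau\,\partial_t Q_1(r,s,t_0)+C\tau^2,$$
where the $C\tau^2$ remainder is $O(K^{-1})$. The goal is to find $L=(l_1,l_2,l_3,l_4)$ such that each point $P(r,s,\tau)\in\mc{M}_1$ is matched by a cylinder point $\tilde P(r',s')+uL$ up to $O(K^{-1})$, using the ansatz $(r',s')=(r-ul_1,\,s-ul_2)$ and $u=\tau/\mu$, where $\mu:=l_3-\beta l_1-\gamma l_2$.

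With this ansatz the first three coordinates match automatically. Since $\tilde Q_1$ is exactly quadratic, its Taylor expansion around $(r,s)$ truncates, and the quadratic remainder $u^2(al_1^2+cl_1l_2+bl_2^2)$ is $O(K^{-1})$. Matching the fourth coordinates then reduces to the polynomial identity (in $r,s$)
$$l_4-l_1\,\partial_r\tilde Q_1-l_2\,\partial_s\tilde Q_1=\mu\,\partial_t Q_1(r,s,t_0).$$
Normalizing $\mu=1$ and matching coefficients in $r,s$ and the constant term yields $l_4=2C\alpha$, $l_3=1+\beta l_1+\gamma l_2$, and a $2\times 2$ linear system on $(l_1,l_2)$ whose determinant equals $4ab-c^2$. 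This is the main obstacle: invertibility of the system rests squarely on the Case~3 hypothesis $|c^2-4ab|\ge 3\eta^2/4$, which forces the determinant to be bounded below and thus pins down $L$ uniquely with $|L|=O(1)$.

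It remains to identify the cylinder $\mathbb{S}+\R L$ with one of the form in Lemma~\ref{cyl} modulo rigid motions. The normalization $\mu=1\neq 0$ forces $L\notin T\mathbb{S}$, since every tangent vector to $\mathbb{S}$, being a linear combination of the vectors $(1,0,\beta,\partial_r\tilde Q_1)$ and $(0,1,\gamma,\partial_s\tilde Q_1)$, satisfies the corresponding $\mu'=0$. Hence the cylinder is a genuine 3D submanifold of $\R^4$, and the projection of $\mathbb{S}$ along $L$ onto $L^\perp$ is non-degenerate with constants depending only on $\eta$ and the coefficients of $Q_1$. Combined with the Gaussian curvature $\sim 1$ of $\mathbb{S}$ established in~\eqref{ww16}, this projection has $|D^2\psi|\sim 1$. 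A rigid motion aligning $L$ with the fourth coordinate axis then places everything in the standard form required by Lemma~\ref{cyl}.
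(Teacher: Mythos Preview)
Your proof is correct and takes a genuinely different route from the paper's. The paper locates the cylinder direction geometrically: it computes the tangent spaces $T_{r,s}$ of $\mc{M}_1$ at the points of $\mathbb{S}$, exhibits a vector $u_4$ lying in every $T_{r,s}$ (by showing that the $r$- and $s$-coefficients in a specific linear combination of $v_1,v_2,v_3$ vanish identically), and then verifies non-degeneracy via the identity
\[
|\det[w_1,w_2,w_3,u_4]| = |c^2-4ab|.
\]
The $O(K^{-1})$ approximation then follows from a second-order Taylor step. You instead parametrize $\mc{M}_1$ explicitly in the normal variable $\tau$, write down an ansatz for the matching cylinder point, and determine the direction $L$ by solving the resulting $2\times 2$ linear system in $(l_1,l_2)$; the discriminant $4ab-c^2$ appears directly as that system's determinant. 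Your route is more elementary and makes the dependence on the Case~3 hypothesis completely transparent; the paper's route makes clearer why such a fixed direction must exist (it is the line common to all tangent hyperplanes).

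One small point of exposition: the conclusion you need for the cylinder to be non-degenerate in the sense of Lemma~\ref{cyl} is that $L$ is uniformly transverse to the three-dimensional affine span of $\mathbb{S}$ (i.e.\ to $\operatorname{span}(w_1,w_2,w_3)$), not merely to the two-dimensional tangent plane $T\mathbb{S}$. Your computation already gives this, since $|\det[w_1,w_2,w_3,L]|=|\mu|=1$ by your normalization; it would be cleaner to state it that way. With $|L|=O(1)$ and this quantitative transversality, the claim that the projected surface satisfies $|D^2\psi|\sim 1$ then follows exactly as you indicate from \eqref{ww16}.
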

\begin{proof}(of the claim)
	The tangent space $T_{r,s}$ to $\mc{M}_{1}$ at the point from $\mathbb{S}$ indexed by $r,s$ is spanned by the vectors
	$$v_1=(1,0,0,2Ar+Ds+Et)=(1,0,0,(2A+E\beta)r+(D+E\gamma) s +E\alpha)$$
	$$v_2=(0,1,0,2Bs+Dr+Ft)=(0,1,0,(D+F\beta)r+(2B+F\gamma) s +F\alpha)$$
	and
	$$v_3=(0,0,1,2Ct+Er+Fs)=(0,0,1,(E+2C\beta)r+(F+2C\gamma) s +2C\alpha).$$
	
	By a direct computation, we have the following identity:

	\beq
	\label{form1}
	\begin{split}
		&\Big|\det
		\begin{bmatrix}
			2A+E\beta & D+F\beta \\
			D+E\gamma  &  2B+F\gamma
		\end{bmatrix}
		-\beta
		\det
		\begin{bmatrix}
			D+F\beta & E+2C\beta\\
			2B+F\gamma & F+2C\gamma
		\end{bmatrix}\\
		&
		-\gamma
		\det
		\begin{bmatrix}
			D+E\gamma & F+2C\gamma\\
			2A+E\beta & E+2C\beta
		\end{bmatrix}\Big|\\
		&= |(2\beta \gamma C+D+E\gamma + F\beta) ^2-4(A+ C\beta ^2 + E \beta)(B+C\gamma ^2+F \gamma)|
	\end{split}
	\endeq
	
	Since the right hand side of the equation is equal to $|c^2- 4ab|$ which is away from $0$, at least one term from the left hand side of the equation must be away from $0$. In particular, this tells us that the
	rank of
	$$\begin{bmatrix}
	2A+E\beta & D+F\beta & E+2C\beta\\
	D+E\gamma & 2B+F\gamma & F+2C\gamma
	\end{bmatrix}$$
	is two.\\\
	
	From this, we could deduce that for each $r,s$, $T_{r,s}$ contains a line $L$ parallel to the vector
	\beq
	\begin{split}
		u_4= \Big(  \det
		\begin{bmatrix}
			D+F\beta & E+2C\beta\\
			2B+F\gamma & F+2C\gamma
		\end{bmatrix} ,
		& \det
		\begin{bmatrix}
			D+E\gamma & F+2C\gamma\\
			2A+E\beta & E+2C\beta
		\end{bmatrix},\\
		&    \det
		\begin{bmatrix}
			2A+E\beta & D+F\beta \\
			D+E\gamma  &  2B+F\gamma
		\end{bmatrix}, \ast\Big)
	\end{split}
	\endeq
	where
	\beq
	\begin{split}
		\ast &= \det
		\begin{bmatrix}
			D+F\beta & E+2C\beta\\
			2B+F\gamma & F+2C\gamma
		\end{bmatrix} \times ((2A+E\beta)r+(D+E\gamma) s +E\alpha)
		+ \\
		&\det
		\begin{bmatrix}
			D+E\gamma & F+2C\gamma\\
			2A+E\beta & E+2C\beta
		\end{bmatrix} \times ((D+F\beta)r+(2B+F\gamma) s +F\alpha)
		+ \\
		&\det
		\begin{bmatrix}
			2A+E\beta & D+F\beta \\
			D+E\gamma  &  2B+F\gamma
		\end{bmatrix} \times ((E+2C\beta)r+(F+2C\gamma) s +2C\alpha).
	\end{split}
	\endeq
	The main point is that $\ast$ is independent of $r,s$ (after simplification, the coefficient of $r,s$ is $0$), so that $u_4$ is independent of $r,s$. \\\

	Consider the cylinder $Cyl$ in $\R^4$ obtained by attaching the line $L$ parallel to $u_4$ to each point of the surface $\mathbb{S}$. In other words, $L$ will be the ``vertical component" of $Cyl$. In general, the line $L$ is not perpendicular to the three dimensional space where the surface lies. However, since
	\beq\label{nondeg}
	\begin{split}
		& |\det[w_1,w_2,w_3,u_4]|=
		\Big|\det
		\begin{bmatrix}
			2A+E\beta & D+F\beta \\
			D+E\gamma  &  2B+F\gamma
		\end{bmatrix}\\
		&   -\beta
		\det
		\begin{bmatrix}
			D+F\beta & E+2C\beta\\
			2B+F\gamma & F+2C\gamma
		\end{bmatrix}
		-\gamma
		\det
		\begin{bmatrix}
			D+E\gamma & F+2C\gamma\\
			2A+E\beta & E+2C\beta
		\end{bmatrix}\Big|
	\end{split}
	\endeq
	is away from zero by formula \eqref{form1}, the cylinder is non-degenerate. Its cross section with the space $L^{\perp}$ is the projection of $\mathbb{S}$ onto $L^{\perp}$. Due to \eqref{ww16} and \eqref{nondeg} this projection will be a surface given by $u_3=\psi(u_1,u_2)$, with $|D^2(\psi)|\sim 1$, for some appropriate orthonormal basis $(u_1,u_2,u_3)$ in $L^{\perp}$. In other words, $Cyl$ is a cylinder like the one in Lemma \ref{cyl}, modulo a rigid motion. Taylor's approximation of second order finishes the proof of the claim, as $\mc{M}_{1}$ lies within $O(K^{-1/2})$ from $\mathbb{S}$.\\\

\end{proof}

It follows that $\cup_{I,J}N_{I,J}$ lies in the $O(K^{-1})$-neighborhood of $Cyl$.
Let now $P_1,P_2,\ldots$ be the partition of this neighborhood like in Lemma \ref{cyl}. By choosing $P_i$ wide enough (still of order $O(K^{-1/2})$) we may arrange so that each $N_{I,J}$ is inside some $P_i$ and moreover, each $P_i$ contains at most one $N_{I,J}$. This can be seen via simple geometry, using the orientation of $Cyl$.

Thus, if $f$ is Fourier supported in $\cup_{I,J}N_{I,J}$, it is automatically Fourier supported in $\cup_iP_i$ and moreover
$$f_{N_{I,J}}=f_{P_i}$$
whenever $N_{I,J}\subset P_i$. With all these observations, inequality \eqref{ww17} is an immediate consequence of \eqref{ww14bju9fgjb89tibv90rt890u}.

Thus, since $1-\frac3p\le \frac32(\frac12-\frac1p)$ when $p\le 6$, we have that  \eqref{1903e1.39} is a consequence of \eqref{ww18}.

\bigskip

This ends the analysis of Case 3 and thus  the proof of Theorem \ref{1903lemma1.14}.

\section{From planes to arbitrary surfaces}
\label{snew}
Throughout this section we will fix $p\in(2,\infty)$ and will assume that the inequality
\beq\label{jj2}
\|E_Sg\|_{L^p(w_{B_M})}\lesssim M^{\gamma}(\sum_{Q\subset S\atop{l(Q)=M^{-1/2}}}\|E_Qg\|^p_{L^p(w_{B_M})})^{1/p}
\endeq
holds true for all $M\ge 1$ and for all rectangular boxes $S\subset [0,1]^3$ with size $\sim M^{-1/2}\times 1\times 1$. In our applications, we will take $p=\frac{14}{3}$.

The forthcoming discussion is following very closely the arguments from \cite{Oh}. This is a variant of the induction on scales that was used in \cite{PrSe}  and then in \cite{BD1} to prove the sharp decoupling for the cone. The intriguing aspect in the present context is that we approximate curved surfaces with zero curvature manifolds (planes). To bridge the gap between zero curvature and nonzero curvature we use the following rescaling argument.

\begin{lem}
	\label{jj1}
	For each rectangular box $R\subset [0,1]^3$ with size $\sim M^{-1}\times M^{-1/2}\times M^{-1/2}$ we have
	$$\|E_Rg\|_{L^p(w_{B_{M^2}})}\lesssim M^{\gamma}(\sum_{Q'\subset R\atop{l(Q')=M^{-1}}}\|E_{Q'}g\|^p_{L^p(w_{B_{M^2}})})^{1/p}.$$
\end{lem}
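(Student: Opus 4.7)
I would prove this by parabolic rescaling. After translating, assume $R = (r_0,s_0,t_0) + [0,M^{-1}]\times[0,M^{-1/2}]^2$. The substitution $(r,s,t) = (r_0,s_0,t_0) + (u,v,w)/M^{1/2}$ sends $R$ to $\tilde{R} := [0,M^{-1/2}]\times[0,1]^2$, which has exactly the shape of the box $S$ treated in the hypothesis \eqref{jj2}, and establishes a bijection between $M^{-1}$-cubes $Q'\subset R$ and $M^{-1/2}$-cubes $\tilde Q\subset\tilde R$.

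The key algebraic identity is that, since each $Q_i$ is homogeneous of degree two,
$$Q_i(r,s,t) = Q_i(r_0,s_0,t_0) + \frac{2(r_0,s_0,t_0)A_i(u,v,w)^T}{M^{1/2}} + \frac{Q_i(u,v,w)}{M}.$$
Substituting into the phase of $E_Rg(x)$ and collecting the $(u,v,w)$-independent, linear, and quadratic contributions yields
$$E_Rg(x) = M^{-3/2}\,e(\phi_0(x))\,E^{\mc{S}}_{\tilde R}\tilde g(Tx),$$
where $\tilde g(u,v,w) = g(r_0+u/M^{1/2},s_0+v/M^{1/2},t_0+w/M^{1/2})$, $\phi_0$ is independent of $(u,v,w)$, and $T:x\mapsto y$ is the block upper-triangular affine map with $y_j = x_j/M^{1/2}+(\text{linear in }x_4,x_5)$ for $j=1,2,3$ and $y_j = x_j/M$ for $j=4,5$. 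One computes $|\det T| = M^{-7/2}$, and the singular values of $T$ are $\sim M^{-1/2}$ (three of them) and $\sim M^{-1}$ (two of them), so the image $E := T(B_{M^2})$ is an ellipsoid in $\R^5$ with semi-axes $\sim M^{3/2}\times M^{3/2}\times M^{3/2}\times M\times M$.

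Applying \eqref{jj2} at scale $M$ to $\tilde R$ and $\tilde g$, with the ball $B_M$ replaced by an arbitrary translate $B_M^{(c)}$ (allowed by translation-invariance), gives
$$\|E^{\mc{S}}_{\tilde R}\tilde g\|^p_{L^p(w_{B_M^{(c)}})} \le M^{p\gamma}\sum_{\tilde Q}\|E^{\mc{S}}_{\tilde Q}\tilde g\|^p_{L^p(w_{B_M^{(c)}})}.$$
I would then integrate both sides in $c$ against the ellipsoid weight $w_E(c)$ and exchange the order of integration. The convolution identity $\int_c w_E(c)w_{B_M^{(c)}}(y)\,dc = (w_E * w_{B_M})(y) \sim M^5 w_E(y)$, valid because the ball $B_M$ is smaller than $E$ in every direction, transfers the inequality to the ellipsoid weight $w_E\sim \tilde w$, where $\tilde w(y) := w_{B_{M^2}}(T^{-1}y)$. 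The common factor $M^5$ cancels on both sides. Undoing the change of variables $y=Tx$ introduces equal Jacobian ($M^{7/2}$) and pointwise ($M^{-3p/2}$) factors on the two sides of the target inequality, returning the claimed estimate in $x$-coordinates.

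The main obstacle is the weight bookkeeping: because the image $T(B_{M^2})$ is an anisotropic ellipsoid rather than a ball of radius $M$, one cannot simply identify $w_{B_{M^2}}(x)$ with $w_{B_M}(Tx)$, and a discrete cover of $E$ by $B_M$-balls can fail to dominate $\tilde w$ along the long directions. The averaging-over-translates argument above is a clean way to resolve this, and the underlying ingredients (translation-invariance of \eqref{jj2} and the convolution estimate $w_E * w_{B_M}\sim M^5 w_E$) are standard manipulations in the decoupling literature.
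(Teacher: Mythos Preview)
Your argument is correct and follows the same parabolic rescaling as the paper: rescale $(r,s,t)$ by $M^{1/2}$ so that $R$ becomes a slab $\sim M^{-1/2}\times 1\times 1$ on which \eqref{jj2} applies at scale $M$, observe that $B_{M^2}$ is mapped to an anisotropic region of dimensions $\sim M^{3/2}\times M^{3/2}\times M^{3/2}\times M\times M$, and then transfer the inequality back. The only cosmetic difference is that the paper covers this region by balls $B_M$ and sums the resulting inequalities, whereas you average over continuous translates via the convolution $w_E*w_{B_M}\sim M^5 w_E$; these two bookkeeping devices are interchangeable and standard.
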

\begin{proof}
	The argument is a standard parabolic rescaling. Rescale $t,r,s$ by $M^{1/2}$. The ball $B_{M^2}$ from $\R^5$ will turn into a set that resembles a box with size $M^{3/2}\times M^{3/2}\times M^{3/2}\times M\times M$. Cover it with balls $B_M$, apply \eqref{jj2} on each $B_M$ then sum up all these contributions.
	
\end{proof}
The key observation is that \eqref{jj2} forces a similar inequality for curved boxes.

\begin{prop}
	\label{reiou rtyg8 we9d0-e r8gew=-d}
	The inequality
	\beq\label{jj3}
	\|E_Ug\|_{L^p(w_{B_K})}\lesssim_\epsilon K^{\gamma+\epsilon}(\sum_{Q\subset U\atop{l(Q)=K^{-1/2}}}\|E_Qg\|^p_{L^p(w_{B_K})})^{1/p}
	\endeq
	holds true for all $K\ge 1$ and $\epsilon>0$, where $U\subset [0,1]^3$ is the $K^{-1/2}$ neighborhood of a smooth surface in $\R^3$ (the graph of a smooth function). The implicit constant is uniform over surfaces with principal curvatures of magnitude $O(1)$.
\end{prop}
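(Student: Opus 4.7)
The plan is to deduce \eqref{jj3} from the planar decoupling assumption \eqref{jj2} by an induction on scales, following the template of \cite{PrSe, BD1} and its refinement in \cite{Oh}. After reducing (via a partition of unity and the linear invariance in Remark \ref{rmnf897-594090-ei=rfo=[=1[}) to the case where the surface is the graph $t = \phi(r,s)$ of a smooth function with $|D^2 \phi| \sim 1$, I would partition the base $[0,1]^2$ into $M^{-1/2}$-squares $\{\sigma\}$ at an intermediate scale $1 \ll M \ll K$. By Taylor's formula, over each $\sigma$ the surface deviates from its tangent plane at the center of $\sigma$ by at most $O(M^{-1})$, so the piece $U_\sigma := U \cap (\sigma \times [0,1])$ is contained in a tilted plate $P_\sigma$ of thickness $\sim \max(K^{-1/2}, M^{-1})$ around that tangent plane.

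I would take $M = K^{1/2}$, matching $M^{-1}$ to the thickness $K^{-1/2}$, so that each $P_\sigma$ has dimensions $\sim K^{-1/4} \times K^{-1/4} \times K^{-1/2}$ in coordinates adapted to its tangent plane. A shear in the $(r,s,t)$-variables (a nonsingular linear change of coordinates, which preserves the hypotheses of Theorem \ref{1803thm1.1} and hence \eqref{jj2} by Remark \ref{rmnf897-594090-ei=rfo=[=1[}) aligns $P_\sigma$ with the coordinate axes. Lemma \ref{jj1} with parameter $M = K^{1/2}$ then applies directly to each sheared plate, yielding decoupling of $E_{P_\sigma} g$ into $K^{-1/2}$-cubes with cost $K^{\gamma/2 + \epsilon}$.

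The remaining task is to decouple $E_U g = \sum_\sigma E_{U_\sigma} g = \sum_\sigma E_{P_\sigma}(g \cdot 1_{U_\sigma})$ into the plate pieces at a comparable cost $K^{\gamma/2 + \epsilon}$, which then combines multiplicatively with the previous step to yield the claimed bound $K^{\gamma+\epsilon}$. For this I would exploit that the Fourier supports of the $E_{U_\sigma} g$ are disjoint pieces of $\mc{S}$ (different $\sigma$'s project to disjoint subsets of the base of the surface), combined with a second application of \eqref{jj2} at the ``dual'' scale --- namely the two-parameter variation of tangent-plane normals along the surface, parametrized by the Gauss map, which itself admits a planar decoupling.

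The main obstacle is precisely this plate-combining step. A direct triangle inequality costs $(K^{1/2})^{1 - 1/p}$, and even $L^2$ orthogonality followed by H\"older loses $(K^{1/2})^{1/2 - 1/p}$; both exceed the budget $K^{\gamma/2 + \epsilon}$ in general. The required refinement --- that the rotation of tangent planes along the curved surface can itself be decoupled cheaply via \eqref{jj2} --- is the technical heart of the argument and what justifies the section's opening remark that ``\eqref{jj2} forces a similar inequality for curved boxes''. This is the input to be transplanted from \cite{Oh}.
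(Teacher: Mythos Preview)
Your two-step scheme is the right geometry but has a genuine gap at the plate-combining step, and the resolution you gesture at (a ``dual'' application of \eqref{jj2} via the Gauss map) is not an actual argument. The hypothesis \eqref{jj2} decouples planar slabs in the \emph{domain} $[0,1]^3$; it says nothing about combining pieces whose Fourier supports point in different directions on $\mc{S}$. There is no Gauss-map decoupling available here, and as you yourself note, both the triangle inequality and $L^2$ orthogonality lose too much.

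The paper's fix is to replace the single jump $1 \to K^{-1/4}$ by a geometric iteration $1 \to K^{-\epsilon} \to K^{-2\epsilon} \to K^{-4\epsilon} \to \cdots \to K^{-1/2}$, doubling the scale at each step. The point is that only the \emph{first} step lacks a Lemma~\ref{jj1} input, but at that step the number of pieces is $\sim K^{2\epsilon}$, so the crude H\"older bound costs only $K^{2\epsilon(1-1/p)}$, which is acceptable. At every subsequent step one observes that a cube $Q_j$ of side $K^{-2^{j}\epsilon}$ meets $U$ in a set contained in a rectangular box of size $K^{-2^{j+1}\epsilon}\times K^{-2^{j}\epsilon}\times K^{-2^{j}\epsilon}$ (this is Taylor's formula, exactly your observation), and then Lemma~\ref{jj1} with $M=K^{2^{j+1}\epsilon}$ decouples it into cubes of side $K^{-2^{j+1}\epsilon}$ at cost $K^{2^{j+1}\epsilon\,\gamma}$. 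The total cost is
\[
K^{O(\epsilon)}\cdot C^{n}\cdot K^{\gamma(2\epsilon+4\epsilon+\cdots+2^{n}\epsilon)}
\le K^{O(\epsilon)}\cdot C^{n}\cdot K^{\gamma},
\]
with $n\sim\log_2(1/\epsilon)$, which gives \eqref{jj3}. In short: the obstacle you identified is real, and the cure is not a clever single step but pushing the unavoidable lossy step to a negligible scale and letting the geometric series do the rest.
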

\begin{proof}
	Fix $\epsilon>0$ of the form $\epsilon=2^{-n-1}$ with $n\in \N$. We may assume that $g$ is supported on $U$.
	
	Cover $U$ with $\sim K^{2\epsilon}$ rectangular boxes $R_1$ of size $K^{-2\epsilon}\times K^{-\epsilon}\times K^{-\epsilon}$, then write using H\"older's inequality
	\beq\label{jj4}
	\|E_Ug\|_{L^p(w_{B_K})}\lesssim K^{2\epsilon(1-\frac1p)}(\sum_{R_1}\|E_{R_1}g\|^p_{L^p(w_{B_K})})^{1/p}.
	\endeq
	Next we apply Lemma \ref{jj1} with $M=K^{2\epsilon}$ on each ball $B_{K^{4\epsilon}}$ in a finitely overlapping cover of $B_K$ and then sum over these balls to get
	\beq\label{jj5}
	\|E_{R_1}g\|_{L^p(w_{B_K})}\le C K^{2\epsilon\gamma}(\sum_{Q_1\subset R_1\atop{l(Q_1)=K^{-2\epsilon}}}\|E_{Q_1}g\|^p_{L^p(w_{B_K})})^{1/p}.
	\endeq
	
	We repeat this argument as follows. Fix a $Q_1$ as above and note that $E_{Q_1}g=E_{Q_1\cap U}g$. Note also that $Q_1\cap U$ is contained in a rectangular box $R_2$ with size $\sim K^{-4\epsilon}\times K^{-2\epsilon}\times K^{-2\epsilon}$, and we may thus write
	$$E_{Q_1}g=E_{R_2}g.$$
	Apply Lemma \ref{jj1} as above with $M=K^{4\epsilon}$ to write
	\beq\label{jj6}
	\|E_{Q_1}g\|_{L^p(w_{B_K})}=\|E_{R_2}g\|_{L^p(w_{B_K})}\le C K^{4\epsilon\gamma}(\sum_{Q_2\subset R_2\atop{l(Q_1)=K^{-4\epsilon}}}\|E_{Q_2}g\|^p_{L^p(w_{B_K})})^{1/p}.
	\endeq
	
	We iterate this procedure. In the final step, we are faced with cubes $Q_{n-1}$ with side length $K^{-2^{n-1}\epsilon}=K^{-\frac14}$. Since $Q_{n-1}\cap U$ is inside a rectangular box $R_n$ with size $$\sim K^{-2^n\epsilon}\times K^{-2^{n-1}\epsilon}\times K^{-2^{n-1}\epsilon}=K^{-1/2}\times K^{-1/4}\times K^{-1/4}$$
	we may apply  Lemma \ref{jj1} one last time with $M=K^{1/2}$ to write
	\beq\label{jj7}
	\|E_{Q_{n-1}}g\|_{L^p(w_{B_K})}=\|E_{R_n}g\|_{L^p(w_{B_K})}\le C K^{\frac12\gamma}(\sum_{Q_n\subset R_n\atop{l(Q_n)=K^{-1/2}}}\|E_{Q_n}g\|^p_{L^p(w_{B_K})})^{1/p}.
	\endeq
	Collecting \eqref{jj4} through \eqref{jj7} we conclude that
	$$\|E_Ug\|_{L^p(w_{B_K})}\lesssim C^n K^{2\epsilon(1-\frac1p)}K^{\gamma(\frac12+\frac14+\ldots)}(\sum_{Q_n\subset U\atop{l(Q_n)=K^{-1/2}}}\|E_{Q_n}g\|^p_{L^p(w_{B_K})})^{1/p}$$
	which is equivalent to \eqref{jj3}.
\end{proof}

We can now prove the following consequence of Theorem \ref{1903lemma1.14}.

\begin{cor}\label{1903lemma1.141}
	Let $H$ be a 2-variety in $\R^3$ which intersects $[0, 1]^3$. Fix a large constant $K\gg 1$. Let $\mc{R}\subset Col_{K^{1/2}}$ be a collections of $K^{1/2}$-cubes, each of which intersects $H$. Then we have the decoupling inequality
	\beq\label{1903e1.39nnn}
	\|\sum_{\beta\in \mc{R}}E_{\beta}g\|_{L^p(w_{B_K})}\lesim_\epsilon K^{\frac{3}{2}(\frac{1}{2}-\frac{1}{p})+\epsilon} (\sum_{\beta\in \mc{R}} \|E_{\beta}g\|_{L^p(w_{B_K})}^p)^{1/p},
	\endeq
	for all $4\le p\le 6$.
\end{cor}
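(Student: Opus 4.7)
The plan is to deduce the corollary from Proposition \ref{reiou rtyg8 we9d0-e r8gew=-d} after verifying its hypothesis \eqref{jj2} and analyzing the structure of 2-varieties. First I would observe that a rectangular box $S\subset[0,1]^3$ of dimensions $\sim M^{-1/2}\times 1\times 1$ lies inside the $O(M^{-1/2})$-neighborhood of the plane parallel to its two long axes. Applying Theorem \ref{1903lemma1.14} (with $K=M$) to the $M^{-1/2}$-cubes of this neighborhood that fall inside $S$ therefore yields \eqref{jj2} with $\gamma=\frac{3}{2}(\frac{1}{2}-\frac{1}{p})$, uniformly over $p\in[4,6]$. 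Proposition \ref{reiou rtyg8 we9d0-e r8gew=-d} then supplies the desired decoupling \eqref{jj3} whenever $U$ is the $K^{-1/2}$-neighborhood of any smooth graph with bounded Hessian.

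Next I would reduce an arbitrary 2-variety $H=\{P=0\}$ with $\deg P\le 2$ to a standard form, via an affine change of variables in $(r,s,t)$; this is justified by Proposition \ref{ff8}, which shows the decoupling question is invariant under such transformations. The resulting types are: a single plane; a union of two planes; a smooth quadric surface (ellipsoid, paraboloid, one- or two-sheeted hyperboloid, or elliptic/parabolic/hyperbolic cylinder); or a cone. The single plane case is Theorem \ref{1903lemma1.14}. For a smooth quadric I would cover $H\cap[0,1]^3$ with $O(1)$ rotated graph charts, in each of which $H$ is written as a graph $\phi\colon\R^2\to\R$ with $|D^2\phi|=O(1)$, and invoke Proposition \ref{reiou rtyg8 we9d0-e r8gew=-d} chart-by-chart. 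For a union of two planes, apply Theorem \ref{1903lemma1.14} on each component separately; if the two planes intersect in a line $L\subset[0,1]^3$, the $K^{-1/2}$-neighborhood of $L$ is effectively one-dimensional and contains only $O(K^{1/2})$ many $K^{-1/2}$-cubes, for which the trivial H\"older bound $K^{\frac{1}{2}(1-\frac{1}{p})}$ is dominated by the target $K^{\frac{3}{2}(\frac{1}{2}-\frac{1}{p})}$ exactly when $p\ge 4$---this is the only place the lower bound on $p$ is genuinely used.

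The main remaining obstacle is the cone, whose principal curvature along circular cross-sections diverges like $1/\rho$ at the apex, so the bounded-Hessian hypothesis of Proposition \ref{reiou rtyg8 we9d0-e r8gew=-d} fails there. I would resolve this by a dyadic annular decomposition in the radial variable $\rho$: for each $0\le k\le\frac{1}{2}\log_2 K$, the annulus $A_k=\{\rho\sim 2^{-k}\}$ is dilated by $2^k$, turning it into a unit-scale piece of the cone with $O(1)$ Hessian (using that the cone is homogeneous) while simultaneously converting the $K^{-1/2}$-cubes into $K_k^{-1/2}$-cubes with $K_k=K/2^{2k}$; this is the same parabolic rescaling built into Lemma \ref{jj1}. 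Proposition \ref{reiou rtyg8 we9d0-e r8gew=-d} at scale $K_k$ bounds the $A_k$-contribution by $K_k^{\gamma+\epsilon}$, and summing over $k$ via H\"older's inequality produces a geometric series dominated by the $k=0$ term $K^{\gamma+\epsilon}$. Uniformity of all constants over the finite-dimensional parameter space of degree-$\le 2$ polynomials follows by a standard compactness argument.
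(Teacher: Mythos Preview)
Your overall strategy matches the paper's: verify hypothesis \eqref{jj2} by feeding rectangular slabs into Theorem \ref{1903lemma1.14} (a slab sits in the $M^{-1/2}$-neighborhood of a plane), and then invoke Proposition \ref{reiou rtyg8 we9d0-e r8gew=-d} for the two-dimensional pieces of $H$. The difference lies in how the 2-variety $H$ is broken up. The paper simply writes $H$ as a union of $O(1)$ smooth manifolds of dimension at most two; for the one-dimensional pieces it quotes Wongkew \cite{Wo} to bound the number of $K^{1/2}$-cubes meeting a real algebraic curve by $O(K^{1/2})$, then applies the trivial decoupling of Lemma \ref{1903lemma1.12}; for the two-dimensional pieces it appeals directly to Proposition \ref{reiou rtyg8 we9d0-e r8gew=-d}. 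You instead carry out an explicit classification of real quadrics and treat the cone by a dyadic annular rescaling. Your route is more hands-on and avoids the external reference \cite{Wo}; it also confronts head-on the unbounded curvature at the cone's apex, which the paper's terse sentence ``if $H$ is a surface, we combine Theorem \ref{1903lemma1.14} with Proposition \ref{reiou rtyg8 we9d0-e r8gew=-d}'' glosses over (the proposition as stated requires principal curvatures $O(1)$). One small remark: for the line-overlap in the two-plane case you quote the H\"older bound $M^{1-1/p}$, but Lemma \ref{1903lemma1.12} already gives the sharper $M^{1-2/p}$, which with $M\sim K^{1/2}$ yields $K^{\frac12-\frac1p}\le K^{\frac32(\frac12-\frac1p)}$ for all $p\ge 2$; so the restriction $p\ge 4$ is not actually forced by that step.
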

\begin{proof}
	Write $H$ as the union of $O(1)$ many manifolds of dimension at most two. It suffices to prove our inequality pretending $H$ is one of these manifolds. The case of zero dimension is trivial. If $H$ is one dimensional, the inequality follows from trivial decoupling (Lemma \ref{1903lemma1.12}), since the result from \cite{Wo} implies that $H$ intersects at most $O(K^{1/2})$ cubes from $Col_{K^{1/2}}$. Finally, if $H$ is a surface, we combine Theorem \ref{1903lemma1.14} with Proposition \ref{reiou rtyg8 we9d0-e r8gew=-d}.
\end{proof}

\section{Equivalence between linear and multilinear decoupling}
\label{s4}

In this subsection we run a version of the Bourgain-Guth argument from \cite{BG} to prove that the linear decoupling inequality \eqref{1803e1.6} is equivalent to a certain multilinear one. Recall that we work with a fixed $\mc{S}$ as in \eqref {ff9}. We continue to use the simplified notation $E$ to denote the extension operator $E^{\mc{S}}$, while $D(N,p)$ will refer to $D_{\mc{S}}(N,p)$. Define the multilinear  decoupling constant $D_{multi}(N, p, \nu)$ to be the smallest number such that
\beq
\label{ww22}
\|\prod_{i=1}^m |E_{R_i} g_i|^{1/m}\|_{L^p(w_{B_N})} \le D_{multi}(N, p, \nu) (\prod_{i=1}^m \sum_{\substack{\Delta\subset R_i\\l(\Delta)=N^{-1/2}}} \|E_{\Delta} g_i\|^p_{L^p(w_{B_N})})^{1/pm}.
\endeq
holds for all $\nu$-transverse cubes $R_i\subset [0,1]^3$ (both $m$ and the side lengths of the cubes can be arbitrary), all $g_i:R_i\to\C$ and all balls $B_N\subset \R^5$.
H\"older's inequality proves that
$$D_{multi}(N, p, \nu)\le D(N,p).$$

In the rest of this section, we will show that the reverse inequality is also essentially true. More precisely, we will prove the following result.
\begin{prop}\label{1903prop1.10}
	For each $K\gg 1$, $4\le p\le 6$ and $\epsilon>0$, there exists  $\beta(p, K, \epsilon)>0$ and $C(p,K)$ such that for each $\epsilon,p$ we have
	\beq
	\lim_{K\to \infty} \beta(p, K, \epsilon)=0,
	\endeq
	and  for each $N\ge K$ we have
	\beq\label{1803e1.31}
	\begin{split}
		& D(N, p)  \le N^{\beta(p, K, \epsilon)+\epsilon}  N^{\frac{3}{2}(\frac{1}{2}-\frac{1}{p})} \\
		&+ C(p,K) N^{\beta(p, K, \epsilon)+\epsilon}  \max_{1\le M\le N} \left[\Big(\frac{M}{N}\Big)^{-\frac{3}{2}(\frac{1}{2}-\frac{1}{p})}  D_{multi}(M, p, \nu_K) \right].
	\end{split}
	\endeq
\end{prop}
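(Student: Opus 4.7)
The plan is to run a Bourgain--Guth iteration at scale $K$. Cover $B_N$ by finitely overlapping balls $B_K$ of radius $K$, partition $[0,1]^3$ into $K$-cubes $\tau$ of side $K^{-1}$, and on each $B_K$ apply the broad/narrow dichotomy via Theorem \ref{1803thm1.7}: either there exist $m\ge 10^4$ $\nu_K$-transverse $K$-cubes $\tau_i$ whose extensions dominate $E_{[0,1]^3}g$ on $B_K$ (broad case), or every dominant $\tau$ lies in the $10/K$-neighborhood of some $2$-variety $V \subset \R^3$ (narrow case). In the broad case the pointwise bound $|Eg(x)| \lesim K^{O(1)} \prod_{i=1}^m |E_{\tau_i}g(x)|^{1/m}$, after pigeonholing over possible $m$-tuples of transverse cubes at a cost $K^{O(1)}$, reduces matters to the multilinear constant $D_{multi}(N,p,\nu_K)$ applied at the full scale $N$. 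In the narrow case one writes
\[
\sum_{\tau \subset V_{10/K}} E_\tau g \;=\;\sum_{\beta\in\mc R} E_\beta \tilde g,\qquad \tilde g := g \cdot \mathbf 1_{V_{10/K}},
\]
where $\mc R$ is the collection of $K^{1/2}$-cubes meeting $V$, and invokes Corollary \ref{1903lemma1.141} to decouple at cost $K^{\gamma + \epsilon}$ with $\gamma := \frac{3}{2}(\frac{1}{2}-\frac{1}{p})$.

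For each $K^{1/2}$-cube $\beta$ produced by the narrow branch, parabolic rescaling reduces decoupling inside $\beta$ down to the target scale $N^{-1/2}$ to the linear constant $D(N/K,p)$ on a rescaled ball (using Fubini in the thin directions that appear after rescaling). Combining the broad and narrow contributions yields a one-step recursion
\[
D(N,p) \;\le\; C_1(p,K)\, K^{\gamma + \epsilon}\, D(N/K,p) \;+\; C_2(p,K)\, D_{multi}(N,p,\nu_K),
\]
where $C_1,C_2 = K^{O(1)}$ are pigeonhole constants. Iterating this $L = \log_K N$ times, with the option at each level to terminate the recursion by applying the broad estimate, unrolls as
\[
D(N,p) \;\le\; (C_1 K^{\gamma + \epsilon})^L \cdot O(1) \;+\; C_2 \sum_{j=0}^{L-1} (C_1 K^{\gamma+\epsilon})^j\, D_{multi}(N/K^j, p, \nu_K).
\]
Setting $M = N/K^j$ turns the geometric factor into $(N/M)^{\gamma + \epsilon} = (M/N)^{-\gamma}\,N^{O(\epsilon)}$, while the accumulated constant $C_1^L = K^{O(L)} = N^{O(1/\log K)}$ is absorbed into a prefactor $N^{\beta(p,K,\epsilon)}$ with $\beta(p,K,\epsilon) \to 0$ as $K \to \infty$. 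This reproduces \eqref{1803e1.31}.

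The \textbf{main obstacle} is to carry out the narrow branch cleanly. Corollary \ref{1903lemma1.141} is a decoupling for sums $\sum_\beta E_\beta g$ over $K^{1/2}$-cubes, so the narrow contribution $\sum_{\tau \subset V_{10/K}} E_\tau g$ must first be recast via the restricted function $\tilde g = g\cdot \mathbf 1_{V_{10/K}}$, and the resulting pieces $\|E_\beta \tilde g\|$ must be controlled by $\|E_\beta g\|$ up to losses that can be absorbed in $\beta(p,K,\epsilon)$. Moreover, the scale mismatch between the $K^{-1/2}$-neighborhood of $V$ in $[0,1]^3$ and the uncertainty radius $K^{-1}$ dual to $B_K$ is precisely what forces the cylindrical decompositions of Section \ref{s3}; Corollary \ref{1903lemma1.141} is designed to keep this narrow step cheap enough that the iteration closes with only sub-polynomial loss in $K$.
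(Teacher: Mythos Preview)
Your approach is the paper's Bourgain--Guth iteration, and the overall scheme is right, but two steps fail as written.

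First, your broad/narrow dichotomy is misstated. The negation of ``there exist $m\ge 10^4$ $\nu_K$-transverse dominant $K$-cubes'' is \emph{not} ``every dominant $\tau$ lies in the $10/K$-neighborhood of a single $2$-variety $V$.'' Theorem~\ref{1803thm1.7} only tells you that \emph{some} $2$-variety $H_1$ captures at least $m/100$ of the dominant cubes; the remaining $\ge 99m/100$ need not lie near $H_1$ at all. The paper repairs this with an inner loop: enlarge to the $10K^{-1/2}$-neighborhood of $H_1$, decouple those $K^{1/2}$-cubes via Corollary~\ref{1903lemma1.141}, remove them, and repeat on the remainder. This terminates in $O(\log K)$ rounds (absorbed into $K^\epsilon$) and leaves a residual ``Case 1'' with fewer than $10^4$ cubes, handled trivially into $K$-cubes. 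Consequently the paper's one-step inequality has \emph{three} outputs ($K$-cubes, $K^{1/2}$-cubes, multilinear), and the iteration tracks a pair $(\lambda_1,\lambda_2)$. Your single-branch recursion $D(N,p)\le C_1 K^{\gamma+\epsilon}D(N/K,p)+C_2\,D_{multi}$ is a legitimate simplification once the dichotomy is corrected---the $K$-cube branch rescales to $D(N/K^2,p)$, which is only better---but as stated your narrow case does not cover all the non-transverse mass. Incidentally, passing to the $K^{-1/2}$-neighborhood and decoupling the full $g$ on those $\beta$'s (as the paper does) avoids the $\tilde g$ versus $g$ issue you flag at the end.

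Second, your bookkeeping of $C_1$ is inconsistent and, taken literally, kills the argument. You assert $C_1=K^{O(1)}$ and then $C_1^L=N^{O(1/\log K)}$; but if $C_1=K^c$ and $L=\log_K N$ then $C_1^L=N^c$, a fixed power of $N$ that cannot be absorbed into $N^{\beta}$ with $\beta\to 0$. The iteration closes precisely because the implicit constant in Corollary~\ref{1903lemma1.141} is $C_{p,\epsilon}$ \emph{independent of $K$}, so that $C_1^L=C_{p,\epsilon}^{\log_K N}=N^{\log_K C_{p,\epsilon}}$, and it is this exponent that defines $\beta(p,K,\epsilon)=\log_K C_{p,\epsilon}\to 0$. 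The $K^{O(1)}$ pigeonhole losses (from counting transverse $m$-tuples) sit only in front of the multilinear term as $C(p,K)$ and are not iterated.
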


Here $\nu_K$ is the quantity appearing  in Theorem \ref{1803thm1.7}.
\begin{rem}
	\label{ww19}
	In light of the expected values for $D(N, p)$, see \eqref{1803e1.7}, the inequality \eqref{1803e1.31} shows that the value of $D(N, p)$ can not be significantly larger than that of  $D_{multi}(N, p, \nu_K)$, if $K$ is large enough.
\end{rem}

To prove the above proposition, we need several auxiliary lemmas. The first one is a ``trivial'' decoupling estimate. It makes use of the orthogonality among functions with frequencies supported on different caps, however it does not take advantage of the curvature of the surface $\mc{S}$ from \eqref{1803e1.1}.
\begin{lem}\label{1903lemma1.12}
	Let $R_1, ..., R_M$ be pairwise disjoint cubes in $[0, 1]^3$ with side length $K^{-1}$. Then for each $2\le p\le \infty$, we have
	\beq
	\|\sum_j E_{R_j} g\|_{L^p(w_{B_K})}\lesim_p M^{1-\frac{2}{p}}(\sum_j \|E_{R_j} g\|_{L^p(w_{B_K})}^p)^{1/p}
	\endeq
\end{lem}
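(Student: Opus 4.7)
This is a classical ``trivial'' decoupling inequality in which no curvature of $\mc{S}$ is exploited, so my plan would be to obtain it by interpolating between the endpoints $p=2$ and $p=\infty$. Write $F:=\sum_{j}E_{R_{j}}g$.

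At $p=\infty$ the triangle inequality immediately gives
$$\|F\|_{L^{\infty}(w_{B_{K}})}\le \sum_{j}\|E_{R_{j}}g\|_{L^{\infty}(w_{B_{K}})}\le M\max_{j}\|E_{R_{j}}g\|_{L^{\infty}(w_{B_{K}})},$$
which matches the desired loss $M^{1-2/\infty}=M$. At $p=2$ I would invoke weighted $L^{2}$ almost-orthogonality: the Fourier transform of $E_{R_{j}}g$ is supported on the cap of $\mc{S}$ above $R_{j}$, a set of diameter $O(K^{-1})$. Since the $R_{j}$ are pairwise disjoint these caps are essentially disjoint, and their $O(K^{-1})$-fattenings still have bounded overlap. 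Because $w_{B_{K}}$ can be majorized by a Schwartz weight whose Fourier transform is genuinely supported in a ball of radius $\ll K^{-1}$ (a standard device), Plancherel then yields
$$\|F\|_{L^{2}(w_{B_{K}})}^{2}\lesssim \sum_{j}\|E_{R_{j}}g\|_{L^{2}(w_{B_{K}})}^{2},$$
which is exactly the claim at $p=2$ (loss $M^{0}=M^{1-2/2}$).

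For intermediate $p\in(2,\infty)$ I would combine the two endpoints through the log-convexity of $L^{p}$ norms,
$$\|F\|_{L^{p}(w_{B_{K}})}\le \|F\|_{L^{2}(w_{B_{K}})}^{2/p}\,\|F\|_{L^{\infty}(w_{B_{K}})}^{1-2/p},$$
and then convert the resulting sums involving $\|E_{R_{j}}g\|_{L^{2}(w_{B_{K}})}$ and $\|E_{R_{j}}g\|_{L^{\infty}(w_{B_{K}})}$ back to the target $\|E_{R_{j}}g\|_{L^{p}(w_{B_{K}})}$ via Bernstein on $B_{K}$: since each $E_{R_{j}}g$ is Fourier-supported in a set of diameter $O(K^{-1})$, it is essentially constant on balls of radius $K$, so $\|E_{R_{j}}g\|_{L^{q}(w_{B_{K}})}\sim K^{5/q}\,|E_{R_{j}}g(x_{0})|$ for every $q\in[1,\infty]$ and any $x_{0}\in B_{K}$. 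A direct bookkeeping of these Bernstein exponents shows that the powers of $K$ cancel, and a final application of Hölder over $j$ collapses everything into $(\sum_{j}\|E_{R_{j}}g\|_{L^{p}(w_{B_{K}})}^{p})^{1/p}$ with total $M$-loss exactly $M^{1-2/p}$, as required.

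The only (mild) technical obstacle is the weighted orthogonality step; it is handled in the routine way by replacing $w_{B_{K}}$ with a smooth majorant whose Fourier transform is truly compactly supported, which is standard in the decoupling literature.
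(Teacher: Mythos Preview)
Your proposal is correct and follows essentially the same route as the paper: endpoint bounds at $p=2$ (via almost-orthogonality from disjoint frequency caps) and $p=\infty$ (via the triangle inequality), followed by interpolation. The paper's proof is only a sketch and defers the details of the interpolation step to \cite{BD5}; your write-up is in fact more explicit, spelling out the log-convexity/Bernstein/H\"older bookkeeping that the paper leaves implicit.
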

\begin{proof}
	When $p=2$, we use the fact that the functions $E_{R_j}g$ have essentially disjoint frequency supports. At $p=\infty$, we use the triangle inequality. The rest  follows  from interpolation. See the proof of Lemma 5.1 from \cite{BD5} for details.
	
\end{proof}

Now we are ready to start the proof of Proposition \ref{1903prop1.10}. The main step is the proof of the following result.
\begin{prop}
	\label{r89ngyr7g7w8rt-02340rogm90ut9g0i}
	For each $4\le p\le 6$, each $\epsilon>0$ and $N\ge K\gg 1,$ we have that
	\beq\label{1903e1.40}
	\begin{split}
		& \|E_{[0, 1]^3} g\|_{L^p(\w)} \lesim_{\epsilon}  K^{\frac{3}{2}(1-\frac{2}{p})+\epsilon} (\sum_{R\in Col_K}\|E_R g\|_{L^p(\w)}^p)^{1/p}\\
		& + K^{\frac{3}{2}(\frac{1}{2}-\frac{1}{p})+\epsilon} \left( \sum_{\beta\in Col_{K^{1/2}}}\|E_{\beta}g\|_{L^p(\w)}^p \right)^{1/p} \\
		& + C(p,K) D_{multi}(N, p, \nu_K) \left(  \sum_{\Delta\in Col_{N^{1/2}}} \|E_{\Delta} g\|_{L^p(\w)}^p \right)^{1/p}.
	\end{split}
	\endeq
	Here $\nu_K$ is the quantity appearing  in Theorem \ref{1803thm1.7}, and $C(p,K)$ is a constant depending on $p$.
\end{prop}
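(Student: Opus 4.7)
The plan is to run a Bourgain--Guth iteration at the scale of $K^{-1}$-cubes. Cover $B_N$ by a finitely overlapping family of balls $B_K$ of radius $K$; since $w_{B_N}$ is essentially constant on each such $B_K$, a standard localization reduces the task to proving the bound on each $B_K$ with $w_{B_N}$ replaced by $w_{B_K}$ and then reassembling. For $x\in B_K$ set $M(x):=\max_{\sigma\in Col_K}|E_\sigma g(x)|$, and call $\tau\in Col_K$ \emph{dominant at} $x$ if $|E_\tau g(x)|\ge K^{-100}M(x)$. Since $|Col_K|=O(K^3)$, the non-dominant cubes contribute at most $K^{-97}M(x)$ in total, hence
$$|E_{[0,1]^3}g(x)|\sim \Big|\sum_{\tau\text{ dominant at }x}E_\tau g(x)\Big|$$
up to a harmless error.

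Now partition $B_K$ into three regions according to the geometry of the dominant collection. \textbf{Case A} (concentrated): all dominant $\tau$'s lie inside a single $\beta^{*}(x)\in Col_{K^{1/2}}$. The same negligibility argument, applied inside $\beta^{*}$, forces $|E_{[0,1]^3}g(x)|\approx|E_{\beta^{*}}g(x)|$; integrating, summing over $\beta^{*}$, and then applying Lemma \ref{1903lemma1.12} with $M=K^{3/2}$ inside each $\beta^{*}$ produces the first term, with constant $K^{\frac32(1-\frac2p)}$. \textbf{Case B} (near a 2-variety): the dominant cubes meet at least two distinct $K^{1/2}$-cubes but, by the contrapositive of Theorem \ref{1803thm1.7}, all lie in the $10/K$-neighborhood of some 2-variety $H=H(B_K)$. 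If $\mc{R}\subset Col_{K^{1/2}}$ denotes the $K^{1/2}$-cubes meeting $H$, the negligibility argument upgrades to $|E_{[0,1]^3}g(x)|\approx|\sum_{\beta\in\mc{R}}E_\beta g(x)|$, and Corollary \ref{1903lemma1.141} applied on $B_K$ delivers the second term. \textbf{Case C} (transverse): otherwise, Theorem \ref{1803thm1.7} guarantees at least $10^{4}$ dominant $\tau$'s forming a $\nu_K$-transverse collection $R_1,\ldots,R_m\in Col_K$, whence
$$|E_{[0,1]^3}g(x)|\lesssim K^{O(1)}\Big(\prod_{i=1}^m|E_{R_i}g(x)|\Big)^{1/m}.$$
Union-bounding over the at most $C(p,K)$ possible transverse sub-collections of $Col_K$, invoking the definition \eqref{ww22} of $D_{multi}(N,p,\nu_K)$ on $B_N$, and applying AM--GM to the geometric mean yields the third term.

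The main new subtlety sits in Case B. In earlier Bourgain--Guth schemes the lower dimensional contribution could be clustered inside a $K^{-1}$-neighborhood of a lower dimensional variety, a scale perfectly compatible with the uncertainty principle on $B_K$. Here Theorem \ref{1803thm1.7} only permits clustering in the coarser $10/K$-neighborhood, which forces the decoupling to run at the scale $K^{-1/2}$ rather than $K^{-1}$. This is precisely the obstruction dealt with in Section \ref{s3}: Theorem \ref{1903lemma1.14} is proved by showing that the curved $K^{-1/2}$-strip lies within $O(K^{-1})$ of a genuine cylinder and then invoking the cylindrical decouplings of Lemmas \ref{ww8} and \ref{cyl}, after which the plane-to-2-variety upgrade of Corollary \ref{1903lemma1.141} becomes available. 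With this ingredient in hand the three-case partition glues together in the standard way, the $K^{\epsilon}$ losses in the first two terms being inherited from Corollary \ref{1903lemma1.141} and the logarithmic overhead of the iteration, and the $C(p,K)$ in the third term absorbing both the union bound over transverse sub-collections and the $K^{O(1)}$ factor from Case C.
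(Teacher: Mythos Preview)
Your Case B misreads the contrapositive of Theorem \ref{1803thm1.7}. The theorem asserts $\nu_K$-transversality provided $m\ge 10^4$ and \emph{no} 2-variety's $10/K$-neighbourhood meets more than $m/100$ of the cubes. When transversality fails (and $m\ge 10^4$), the contrapositive therefore only produces a 2-variety $H_1$ whose $10/K$-neighbourhood contains \emph{more than $m/100$} of the dominant cubes --- not all of them. A configuration with, say, ninety percent of the dominant cubes clustered near a quadric and the remainder scattered elsewhere falls into none of your three cases: it is not $\nu_K$-transverse, the dominant cubes do not all lie near a single 2-variety, and they span many $K^{1/2}$-cubes. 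Thus your trichotomy is not exhaustive (equivalently: your claim that Case C forces transversality is unjustified).

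The paper's Case 3 closes precisely this gap by iterating. One peels off the dominant cubes lying in the $10K^{-1/2}$-neighbourhood of $H_1$ (this portion is controlled by Corollary \ref{1903lemma1.141}), and then reruns the whole trichotomy on the surviving collection $Col_K^{(1)}$. Since each pass removes at least one percent of the remaining cubes, the procedure terminates after $O(\log K)$ rounds, and the accumulated losses are absorbed into $K^\epsilon$. Your closing reference to ``the logarithmic overhead of the iteration'' gestures at this but does not carry it out; without the peeling argument the lower-dimensional and transverse contributions cannot be separated. A secondary omission: the situation $m<10^4$ is not covered by your Case A, which demands that all dominant cubes sit in a \emph{single} $K^{1/2}$-cube; the paper disposes of this directly (its Case 1) via the triangle inequality, with an $O(1)$ constant.
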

\begin{proof}
	Partition $[0, 1]^3$ into cubes $R$ from $Col_K$. Following Bourgain and Guth \cite{BG}, we may assume that $|E_R(x)|$ is essentially constant on each ball $B_K$ of radius $K$. This value will be denoted as $|E_R g(B_K)|$. Write
	\beq
	E_{[0, 1]^3} g(x)=\sum_{R\in Col_K} E_R g(x),\;\;x\in B_K.
	\endeq
	For a fixed $B_K$, let $R^*\in Col_{K}$ be the cube that maximizes $|E_R g(B_K)|$. Let $Col_{K}^*$ be those cubes $R\in Col_K$ such that
	\beq
	|E_R g(B_K)|\ge K^{-3}|E_{R^*} g(B_K)|.
	\endeq
	Before we proceed, let us first explain the ideas. We will deal with three cases. The first case is when $Col_K^*$ contains a ``small'' amount of cubes. In this case,  applying only the triangle and the Cauchy--Schwarz inequality will suffice. The second case is when the cardinality of $Col^*_K$ is large, but the cubes in $Col^*_K$ are not clustered near any 2-variety in $\R^3$. By Theorem \ref{1803thm1.7}, we know that these cubes are transverse, which allows us to invoke multilinear estimates. The last case is when a big percentage of the cubes in $Col^*_K$ intersect a 2-variety in $\R^3$. In this case, we will rely on a lower dimensional decoupling inequality, that is \eqref{1903e1.39} from Theorem \ref{1903lemma1.14}.\\
	
	{\bf Case 1:} Suppose
	\beq
	\#(Col_K^*)<10^4.
	\endeq

	In this case we combine the triangle and the Cauchy--Schwarz inequality, to get a very favorable estimate. First we observe that for $x\in B_K$
	$$
	|E_{[0, 1]^3} g(x)| \le |\sum_{R\in Col_{K}^*} E_R g(x)|+|E_{R^*}g(B_K)| \lesssim (\sum_{R\in Col_{K}} |E_R g(x)|^p)^{1/p}
	$$
	Integrating on $B_K$  we get
	\beq\label{1903e1.45}
	\|E_{[0, 1]^3} g\|_{L^p(w_{B_K})} \lesim  (\sum_{R\in Col_K}\|E_R g\|_{L^p(w_{B_K})}^p)^{1/p}.
	\endeq
	Note that we get a better estimate than needed in this case.
	\medskip
	
	{\bf Case 2:} Assume
	\beq
	m:=\#(Col_K^*)\ge 10^4.
	\endeq
	Moreover, assume  there does not exist any 2-variety in $\R^3$ whose $10/K$ neighbourhood intersects more than $m/100$ of the cubes from $Col_K^*$. Then by Theorem \ref{1803thm1.7}, these $m$ cubes are $\nu_K$-transverse. We may write
	\beq\label{1903e1.47}
	\begin{split}
		\|E_{[0, 1]^3} g\|_{L^p(w_{B_K})} & \lesim K^6 \max_{R_1, ..., R_m\atop{\nu_K-\text{transverse}}} \|\prod_{i=1}^m|E_{R_i} g|^{1/m}\|_{L^p(w_{B_K})}\\
		& \lesim K^6 \left( \sum_{R_1, ..., R_m\atop{\nu_K-\text{transverse}}} \|\prod_{i=1}^m|E_{R_i} g|^{1/m}\|^p_{L^p(w_{B_K})}\right)^{1/p}.
	\end{split}
	\endeq
	
	{\bf Case 3:} Suppose that  there is a 2-variety in $\R^3$ whose $10/K$ neighbourhood intersects more than $m/100$ of the (at least $10^4$) cubes from $Col_K^*$. Call this 2-variety $H_1$. Consider the $10K^{-1/2}$-neighbourhood $\mc{N}_{K^{-1/2}}(H_1)$ of $H_1$. Denote
	\beq
	Col_K^{(1)}:=Col_K^*\setminus \{R\in Col_K^*: R\subset \mc{N}_{K^{-1/2}}(H_1)\}.
	\endeq
	Moreover, define
	\beq
	m_1=\#\Big(Col_K^{(1)} \Big).
	\endeq
	We cover $\mc{N}_{K^{-1/2}}(H_1)$ using cubes $\beta$ from $Col_{K^{1/2}}$. By Corollary \ref{1903lemma1.141}
	\beq\label{1903e1.50}
	\left\|\sum_{\substack{\beta\in Col_{K^{1/2}}\\ \beta\subset \mc{N}_{K^{-1/2}}(H_1)}}E_{\beta}g\right\|_{L^p(w_{B_K})} \lesim_\epsilon K^{\frac{3}{2}(\frac{1}{2}-\frac{1}{p})+\epsilon} \left(\sum_{\substack{\beta\in Col_{K^{1/2}}\\ \beta\subset \mc{N}_{K^{-1/2}}(H_1)}} \|E_{\beta}g\|_{L^p(w_{B_K})}^p\right)^{1/p}.
	\endeq
	This takes care of the cubes inside $\mc{N}_{K^{-1/2}}(H_1)$. For cubes outside, we repeat the whole procedure, with $Col_K^*$ replaced by $Col_K^{(1)}$ and $m$ by $m_1$. This procedure will terminate in at most $\log K$ many steps, as $Col_K^{(i+1)}$ is at least one percent smaller than $Col_K^{(i)}$. The $\log K$ will be harmlessly absorbed into the $K^\epsilon$ term.\\
	
	We collect all the contributions of the type \eqref{1903e1.45}, \eqref{1903e1.47} and \eqref{1903e1.50} from each step,
	
	\beq
	\begin{split}
		& \|E_{[0, 1]^3} g\|_{L^p(w_{B_K})} \lesim_{\epsilon}  (\sum_{R\in Col_K}\|E_R g\|_{L^p(w_{B_K})}^p)^{1/p}\\
		& + K^{\frac{3}{2}(\frac12-\frac{1}{p})+\epsilon} \left( \sum_{\beta\in Col_{K^{1/2}}}\|E_{\beta}g\|_{L^p(w_{B_K})}^p \right)^{1/p} \\
		& + K^{6}  \left( \sum_{K\lesssim m\lesssim K^3}\sum_{R_1, ..., R_{m}: \nu_K transverse} \|\prod_{i=1}^{m}|E_{R_i} g|^{1/m}\|^p_{L^p(w_{B_K})}\right)^{1/p}.
	\end{split}
	\endeq
	Raising to the $p$-th power and summing over $B_K\subset B_N$, we obtain
	\beq
	\label{fuyqerf7wer7f8er789er89}
	\begin{split}
		& \|E_{[0, 1]^3} g\|_{L^p(\w)} \lesim_{\epsilon}   (\sum_{R\in Col_K}\|E_R g\|_{L^p(\w)}^p)^{1/p}\\
		& + K^{\frac{3}{2}(\frac12-\frac{1}{p})+\epsilon} \left( \sum_{\beta\in Col_{K^{1/2}}}\|E_{\beta}g\|_{L^p(\w)}^p \right)^{1/p} \\
		& + K^{6}  \left( \sum_{K\lesssim m\lesssim K^3}\sum_{R_1, ..., R_{m}: \nu_K transverse} \|\prod_{i=1}^{m}|E_{R_i} g|^{1/m}\|^p_{L^p(\w)}\right)^{1/p}.
	\end{split}
	\endeq
	Note that there are only $O_K(1)$ choices of squares.
	By the definition of the multilinear decoupling constant in \eqref{ww22}, we conclude \eqref{1903e1.40}, as desired. Note that the first term in \eqref{fuyqerf7wer7f8er789er89} has a more favorable estimate than the one stated in \eqref{1903e1.40}. We prefer to work with the latter estimate, as it makes the rest of the argument more symmetric.
	
\end{proof}
Given a cube $R\subset [0,1]^3$ and $\alpha^{-1}<l(R)$, we denote by $Col_\alpha(R)$ the collection of all dyadic cubes inside $R$ with side length $\frac1\alpha$.

A standard rescaling gives (see Proposition 5.6 in \cite{BD1} for details)
\begin{prop}
	Let $R\subset [0, 1]^3$ be a cube with side length $\delta$. Then for each $\epsilon>0$, $K\ge 1$ and $N>\delta^{-2}$, we have
	\beq\label{2012e1.38}
	\begin{split}
		& \|E_{R} g\|^p_{L^p(\w)} \le C_{p,\epsilon}\big[  K^{\frac{3}{2}(p-2)+\epsilon} \sum_{R'\in Col_{K/\delta}(R)}\|E_{R'} g\|_{L^p(\w)}^p\\
		& + K^{\frac{3}{2}(\frac{p}{2}-1)+\epsilon}  \sum_{\beta\in Col_{K^{1/2}/\delta}(R)}\|E_{\beta}g\|_{L^p(\w)}^p  \\
		& + C(p,K) D^p_{multi}(N\delta^2, p, \nu_K)  \sum_{\Delta\in Col_{N^{1/2}}(R)} \|E_{\Delta} g\|_{L^p(\w)}^p\big].
	\end{split}
	\endeq
\end{prop}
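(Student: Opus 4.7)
The plan is to deduce this from Proposition \ref{r89ngyr7g7w8rt-02340rogm90ut9g0i} by a parabolic rescaling that maps $R$ onto $[0,1]^3$. Write $R=(r_0,s_0,t_0)+[0,\delta]^3$, set $\v=(r_0,s_0,t_0)+\delta\v'$, and use the homogeneity of the quadratic forms to expand
$$Q_i(r_0+\delta r',s_0+\delta s',t_0+\delta t')=Q_i(r_0,s_0,t_0)+\delta\,\nabla Q_i(r_0,s_0,t_0)\cdot\v'+\delta^2 Q_i(\v').$$
The constant $Q_i(r_0,s_0,t_0)$ contributes only a unimodular phase, while the linear terms are absorbed by an affine substitution of the form $y_i=\delta x_i+\delta(\cdots)x_4+\delta(\cdots)x_5$ for $i=1,2,3$ and $y_j=\delta^2 x_j$ for $j=4,5$, with the coefficients depending on $\nabla Q_1(r_0,s_0,t_0)$ and $\nabla Q_2(r_0,s_0,t_0)$. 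A direct calculation then gives $|E_R^{\mc{S}} g(x)|=\delta^3|E_{[0,1]^3}^{\mc{S}}\tilde g(y)|$ with $\tilde g(\v')=g((r_0,s_0,t_0)+\delta\v')$. Crucially, the rescaled manifold is again $\mc{S}$ (this is the point of using homogeneous quadratic forms), so Proposition \ref{r89ngyr7g7w8rt-02340rogm90ut9g0i} applies verbatim in the $\v'$-coordinates.

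Next I would transport the weighted norm. The linear map $L:x\mapsto y$ is block-triangular with $|\det L|=\delta^7$ and singular values of order $\delta,\delta,\delta,\delta^2,\delta^2$, so $L(B_N)$ is essentially an axis-parallel box of dimensions $\delta N\times\delta N\times\delta N\times\delta^2 N\times\delta^2 N$. Since $N>\delta^{-2}$ we have $\delta^2 N\ge 1$, and this box is covered by $O(\delta^{-3})$ balls $\{B_{\delta^2 N}^{(k)}\}$ in $y$-space. A routine polynomial-decay computation gives the pointwise bound $w_{B_N}(L^{-1}(y-c'))\lesssim \sum_k w_{B_{\delta^2 N}^{(k)}}(y)$, which combined with the Jacobian identity yields
$$\|E_R^{\mc{S}} g\|_{L^p(w_{B_N})}^p\lesssim \delta^{3p-7}\sum_k \|E_{[0,1]^3}^{\mc{S}}\tilde g\|_{L^p(w_{B_{\delta^2 N}^{(k)}})}^p.$$

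Applying Proposition \ref{r89ngyr7g7w8rt-02340rogm90ut9g0i} (raised to the $p$-th power) to each term on the right at scale $N\delta^2$ produces sums over subcubes of $[0,1]^3$ of side lengths $K^{-1}$, $K^{-1/2}$, and $(N\delta^2)^{-1/2}$; unraveling the dilation by $\delta$, these become subcubes of $R$ in $Col_{K/\delta}(R)$, $Col_{K^{1/2}/\delta}(R)$, and $Col_{N^{1/2}}(R)$ respectively, exactly as required. Reversing the weight decomposition on each atomic piece and summing over $k$ reassembles the $w_{B_N}$-weighted norms and cancels the factor $\delta^{3p-7}$, while the exponents $K^{p\cdot\frac{3}{2}(1-\frac{2}{p})}=K^{\frac{3}{2}(p-2)}$ and $K^{p\cdot\frac{3}{2}(\frac12-\frac{1}{p})}=K^{\frac{3}{2}(\frac{p}{2}-1)}$ match the statement. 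The only delicate point is the weight transport under the anisotropic $L$, which is precisely why one sums over the $O(\delta^{-3})$ cover rather than working with a single weight. This is the standard parabolic rescaling of Proposition 5.6 in \cite{BD1}; the remainder is routine bookkeeping.
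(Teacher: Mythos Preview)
Your proposal is correct and is precisely the approach the paper takes: the paper's entire proof is the one-line remark ``A standard rescaling gives (see Proposition 5.6 in \cite{BD1} for details)'', and you have spelled out exactly that parabolic rescaling, including the anisotropic weight transport via a cover by $O(\delta^{-3})$ balls of radius $\delta^2 N$.
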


We have arrived at the final stage of the proof of Proposition \ref{1903prop1.10}. We iterate the above result, from scale one, until scale $K^n$ is reached, where $n$ is such that
\beq
K^n= N^{1/2}.
\endeq
In other words, the iteration of each term terminates exactly when it equals  the last term in \eqref{2012e1.38}. At the end of the iteration, we will get many copies of the last term in \eqref{2012e1.38}, each of which comes with a certain coefficient.  Let us trace the iteration history of such a term. Suppose that throughout the iteration history the scale gets smaller by a factor of $\delta$ exactly $\lambda_1$ times and by a factor of $\delta^{1/2}$ exactly $\lambda_2$ times. Then
\beq
\lambda_1+\frac{\lambda_2}{2}\le n=\frac{1}{2}\log_K N.
\endeq
The corresponding coefficient of the final term corresponding to this $(\lambda_1,\lambda_2)$ pattern of iterations is
\beq
(C_{p, \epsilon})^{\lambda_1+\frac{\lambda_2}{2}}K^{(\lambda_1+\frac{\lambda_2}{2})\cdot \epsilon}K^{\frac{3}{2}(p-2)\lambda_1+\frac{3}{2}(\frac{p}{2}-1)\lambda_2}
\endeq
Notice that
$$
K^{(\lambda_1+\frac{\lambda_2}{2})\cdot \epsilon} \le K^{\epsilon\cdot \log_K N}\le N^{\epsilon},
$$
$$
(C_{p, \epsilon})^{\lambda_1+\frac{\lambda_2}{2}} \le N^{\log_K C_{p, \epsilon}},
$$
$$
K^{\frac{3}{2}(p-2)\lambda_1+\frac{3}{2}(\frac{p}{2}-1)\lambda_2}\le N^{\frac32(\frac{p}{2}-1)}.
$$
It is easy to see that there are at most $2^n$ terms corresponding to a given $(\lambda_1,\lambda_2)$ pattern. We write $2^n=N^{\log_K2}$.
Hence we obtain
$$D(N, p)^p  \le N^{\epsilon+\log_K C_{p, \epsilon}}  N^{\log_K2}\sum_{\lambda_1+(\lambda_2/2)=\frac{1}{2}\log_K N}  K^{\frac{3}{2}(p-2)\lambda_1+\frac{3}{2}(\frac{p}{2}-1)\lambda_2} \\
$$
\begin{multline*}
	+ C(p,K) N^{\epsilon+\log_K C_{p, \epsilon}}  N^{\log_K2}\\
	\sum_{\lambda_1+(\lambda_2/2)<\frac{1}{2}\log_K N} K^{\frac{3}{2}(p-2)\lambda_1+\frac{3}{2}(\frac{p}{2}-1)\lambda_2}  D^p_{multi}(NK^{-2\lambda_1-\lambda_2}, p, \nu_K)
\end{multline*}
\begin{multline*}
	\le N^{\epsilon+\log_K (2C_{p, \epsilon})}\log_K N \\
	[N^{\frac{3}{4}(p-2)}
	+ C(p,K) \sum_{j<\log_K N} K^{\frac{3j}{2}(\frac{p}2-1)} D^p_{multi}(NK^{-j}, p, \nu_K)]\
\end{multline*}
\begin{multline*}
	\le N^{\epsilon+\log_K (2C_{p, \epsilon})}(\log_K N)^2 \\
	[N^{\frac{3}{4}(p-2)}
	+ {C(p,K)} \max_{j<\log_K N} K^{\frac{3j}{2}(\frac{p}2-1)} D^p_{multi}(NK^{-j}, p, \nu_K)].
\end{multline*}
This finishes the proof of Proposition \ref{1903prop1.10}, using
$$\beta(p,K,\epsilon)=\frac1{p}\log_K (2C_{p, \epsilon}).$$

\section{The final iteration}
\label{s5}

In this section we finish the proof of \eqref{1803e1.7}. The argument here is entirely standard, and it appears in all recent papers related to decouplings.
\medskip

Let $4\le p\le 6$.  Fix $K\gg 1$ and $\nu_K$ transverse cubes $R_1,\ldots,R_m\subset [0,1]^3$. Fix also $g_i:R_i\to\C$.
Combining the inequality in Proposition \ref{iovjurgyptn8vbgu89357893v7589ty7056893} with H\"older's inequality we derive the following critical inequality, valid for
$\kappa_p=\frac{p-\frac{10}{3}}{p-2}\le \kappa\le 1$
$$\|(\prod_{i=1}^{m}\sum_{\atop{l(\tau)=N^{-1/4}}}|E_{\tau}g_i|^2)^{\frac1{2m}}\|_{L^{p}(w_{B_R})}\lesssim_{\epsilon,K}$$
\beq
\label{2003e1.71}
N^{\epsilon+\frac{3\kappa}4(\frac12-\frac1p)}\|(\prod_{i=1}^{m}\sum_{\atop{l(\Delta)=N^{-1/2}}}|E_{\Delta}g_i|^2)^{\frac1{2m}}
\|_{L^{p}(w_{B_R})}^{1-\kappa}
(\prod_{i=1}^{m}\sum_{\atop{l(\tau)=N^{-1/4}}}\|E_{\tau}g_i\|_{L^{p}(w_{B_R})}^p)^{\frac{\kappa}{pm}}.
\endeq

By the Cauchy-Schwarz inequality, we have for $s\ge 1$
\beq\label{2003e1.72}
\|(\prod_{j=1}^m |E_{R_j} g_j|)^{1/m}\|_{L^p(\w)} \le N^{\frac{3}{2}\cdot 2^{-s}} \|(\prod_{j=1}^m \sum_{l(\tau)=N^{-2^{-s}}}|E_{\tau} g_j|^2)^{1/2m}\|_{L^p(\w)}.
\endeq

We start with \eqref{2003e1.72}, and apply the estimate \eqref{2003e1.71} until we reach the scale $N^{-1/2}$. We control the last term in \eqref{2003e1.71} that appears in each step of the  iteration by parabolic rescaling. That is, for each cube $R\subset [0, 1]^3$ with side length $L$, we have
\beq
\|E_R g\|_{L^p(\w)}\le D(\frac{N}{L^2}, p) (\sum_{\substack{\Delta\subset R\\ l(\Delta)=N^{-1/2}}} \|E_{\Delta} g\|_{L^p(\w)}^p )^{1/p}.
\endeq
In the end, we obtain
\beq
\begin{split}
	& \|(\prod_{j=1}^m |E_{R_j} g_j|)^{1/m}\|_{L^p(\w)} \\
	& \le N^{\frac{3}{2} \cdot 2^{-s}} (C_{p, K, \epsilon} N^{\epsilon})^{s-1} N^{\frac{3\kappa}{4}(\frac{1}{2}-\frac{1}{p})(1-\kappa)^{s-2}}\times ... \times N^{\frac{3\kappa}{2^{s-1}}(\frac{1}{2}-\frac{1}{p})(1-\kappa)} \times N^{\frac{3\kappa}{2^s}(\frac{1}{2}-\frac{1}{p})} \\
	&  \times D(N^{1-2^{-s+1}}, p)^{\kappa} \times D(N^{1-2^{-s+2}}, p)^{\kappa(1-\kappa)}\times  ... \times D(N^{1/2}, p)^{\kappa(1-\kappa)^{s-2}}\\
	& \|(\prod_{j=1}^m \sum_{l(\Delta)=N^{-1/2}}|E_{\Delta} g_j|^2)^{1/2m}\|^{(1-\kappa)^{s-1}}_{L^p(\w)} \big(\prod_{j=1}^m  \sum_{l(\Delta)=N^{-1/2}} \|E_{\Delta}g_j\|_{L^{p}(\w)}^p\big)^{\frac{1-(1-\kappa)^{s-1}}{pm}}
\end{split}
\endeq
By H\"older's and Minkowski's inequality, we bound the second to last term by
\beq
N^{\frac{3}{2}(\frac{1}{2}-\frac{1}{p})(1-\kappa)^{s-1}} \left(\prod_{j=1}^m  \sum_{l(\Delta)=N^{-1/2}} \|E_{\Delta}g_j\|_{L^{p}(\w)}^p\right)^{\frac{(1-\kappa)^{s-1}}{pm}}.
\endeq
By taking supremum over all $\nu_K$- transverse cubes $R_i$ and all $g_i:R_i\to\C$, these observations lead to
\beq\label{2003e1.76}
\begin{split}
	& D_{multi}(N, p, \nu_K) \le (C_{p, K, \epsilon} N^{\epsilon})^{s-1} N^{\frac{3}{2}\cdot 2^{-s}+ \frac{3}{2}(\frac{1}{2}-\frac{1}{p})(1-\kappa)^{s-1}} N^{3\kappa 2^{-s}(\frac{1}{2}-\frac{1}{p})\frac{1-(2(1-\kappa))^{s-1}}{2\kappa-1}}\\
	& D(N^{1-2^{-s+1}}, p)^{\kappa} \times D(N^{1-2^{-s+2}}, p)^{\kappa(1-\kappa)}\times  ... \times D(N^{1/2}, p)^{\kappa(1-\kappa)^{s-2}}.
\end{split}
\endeq
\bigskip

Now we come to the final step of the proof Theorem \ref{1803thm1.1}. Recall that  we have shown that the linear decoupling constant $D(N, p)$ is essentially controlled by the multilinear decoupling constant $D_{multi}(N, p, \nu_K)$. See the estimate \eqref{1803e1.31} from Proposition \ref{1903prop1.10}. The estimate \eqref{2003e1.76} also reveals a connection between these two constants. We will see that these two estimates together lead to the final conclusion.
\medskip

Let $\gamma_p$ be the unique positive constant such that
\beq
\lim_{N\to \infty} \frac{D(N, p)}{N^{\gamma_p+\delta}}=0, \text{ for each } \delta>0,
\endeq
and
\beq
\limsup_{N\to \infty} \frac{D(N, p)}{N^{\gamma_p-\delta}}=\infty, \text{ for each } \delta>0.
\endeq

By substituting the estimate $D(N, p)\lesim_{\delta} N^{\gamma_p+\delta}$ into \eqref{2003e1.76}, we obtain
\beq\label{2003e1.79}
\limsup_{N\to \infty} \frac{D_{multi(N, p, \nu_K)}}{N^{\gamma_{\kappa, p, \delta, s, \epsilon}}}<\infty.
\endeq
Here
\beq\label{2003e1.80}
\begin{split}
	\gamma_{\kappa, p, \delta, s, \epsilon}=& \epsilon(s-1)+\frac{3}{2}\cdot 2^{-s} +\frac{3}{2}(\frac{1}{2}-\frac{1}{p})(1-\kappa)^{s-1}\\
	&+ 3\kappa 2^{-s}(\frac{1}{2}-\frac{1}{p})\frac{1-(2(1-\kappa))^{s-1}}{2\kappa-1}\\
	& + \kappa(\gamma_p+\delta)\Big( \frac{1-(1-\kappa)^{s-1}}{\kappa} -2^{-s+1} \frac{1-(2(1-\kappa))^{s-1}}{2\kappa-1}\Big).
\end{split}
\endeq
By invoking interpolation, it suffices to prove that
$$\gamma_{\frac{14}{3}}\le \frac{3}{2}(\frac12-\frac{3}{14})=\frac37$$
We observe that
\beq
2(1-\kappa_{\frac{14}{3}})=1.
\endeq
This is precisely the relation that shows that $\frac{14}{3}$ is the critical exponent for our decoupling.
It suffices to prove that for each $\kappa>\frac12$
\beq
\gamma_{\frac{14}{3}}\le \frac{3}{2}(\frac{2\kappa-1}{2\kappa}+\frac{1}{2}-\frac3{14}).
\endeq
Assume for contradiction that there exists $\kappa>\frac12$ such that
\beq\label{2003e1.83}
\gamma_{\frac{14}{3}}> \frac{3}{2}(\frac{2\kappa-1}{2\kappa}+\frac{1}{2}-\frac{3}{14}).
\endeq
Using \eqref{2003e1.83}, multiplying both side of \eqref{2003e1.80} by $2^{s}$, letting $s$ be large enough, and then $\epsilon$ and $\delta$ be small enough, we obtain
\beq
\gamma_{\kappa, \frac{14}{3}, \delta, s, \epsilon}< \gamma_{\frac{14}3}.
\endeq

Fix small enough $\epsilon, \delta$ and large enough $s$, then  choose $K$ so large that
\beq\label{2003e1.86}
\frac{3}{2} (\frac{1}{2}-\frac{3}{14})+\epsilon+\beta(\frac{14}{3}, K,\epsilon)< \frac{3}{2}(\frac{2\kappa-1}{2\kappa}+\frac{1}{2}-\frac{3}{14}),
\endeq
and
\beq\label{2003f1.87}
\gamma_{\kappa, \frac{14}{3}, \delta, s, \epsilon}+\epsilon+\beta(\frac{14}{3}, K,\epsilon)< \gamma_\frac{14}{3}.
\endeq
Here $\beta(\frac{14}{3}, K,\epsilon)$ is the constant that appears in Proposition \ref{1903prop1.10}. Now combining Proposition \ref{1903prop1.10} with \eqref{2003e1.83} and  \eqref{2003e1.86}, we find that
\beq\label{2003e1.87}
D(N,\frac{14}{3} )\lesim_{K, \epsilon} N^{\epsilon+\beta(\frac{14}{3}, K, \epsilon)} \max_{1\le M\le N} \left[\Big(\frac{M}{N}\Big)^{-\frac{3}{2}(\frac{1}{2}-\frac{3}{14})}  D_{multi}(M, \frac{14}{3}, \nu_K) \right].
\endeq
We distinguish two cases, each of which will lead to a contradiction. \smallskip

Case 1. Assume  $\gamma_{\kappa, \frac{14}{3}, \delta, s, \epsilon}<\frac{3}{2}(\frac{1}{2}-\frac{3}{14})$. Then by \eqref{2003e1.79} and \eqref{2003e1.87}, we obtain
\beq
D(N, \frac{14}{3})\lesim_{K, \epsilon} N^{\epsilon+\beta(\frac{14}{3}, K, \epsilon)} N^{\frac{3}{2}(\frac{1}{2}-\frac{3}{14})}.
\endeq
By \eqref{2003e1.86}, this contradicts the assumption \eqref{2003e1.83}.
\smallskip

Case 2. Assume  $\gamma_{\kappa, \frac{14}{3}, \delta, s, \epsilon}\ge \frac{3}{2}(\frac{1}{2}-\frac{3}{14})$.
Substitute this into \eqref{2003e1.87}, we obtain
\beq
D(N,\frac{14}{3} )\lesim_{K, \epsilon} N^{\epsilon+\beta(\frac{14}{3}, K, \epsilon)} N^{\gamma_{kappa,\frac{14}3, \delta, s, \epsilon}}.
\endeq
By \eqref{2003f1.87}, this contradicts the definition of the constant $\gamma_{\frac{14}{3}}$.

The analysis of these cases  shows that \eqref{2003e1.83} can not be true. This finishes the proof of the estimate $\gamma_{\frac{14}3}\le 3/7$, and thus, of Theorem \ref{1803thm1.1}.

\section{Some linear algebra}
\label{s6}

Let us start by recalling some notation. We are concerned with the  surface
$$\mc{S}:=\{(r, s, t, Q_1(r,s,t),Q_2(r,s,t)): (r, s, t)\in [0, 1]^3\}$$
satisfying the requirement of Theorem \ref{1803thm1.1}. In this section, we will say that a property $\Omega=\Omega(\xi)$ (here $\xi \in \R ^3$) holds almost surely if $\{\xi: \Omega(\xi) \text{ does not hold}\}\neq \R ^3$. We will write $V_{r,s,t}$ to denote the tangent space to $\mc{S}$ at $(r, s, t, Q_1(r,s,t),Q_2(r,s,t))$  and $\pi_{r,s,t}$ to denote the orthogonal projection onto it.

The main purpose of this section is to prove the following lemma.

\begin{lem}\label{2201lemma3.1}
	1) Let $V$ be a one  dimensional linear subspace of $\R^5$. Then the set
	\beq\label{1903e1.70}
	\{(r, s, t): dim(\pi_{r, s, t}(V))=0\}
	\endeq
	is contained in a 2-variety. \\
	2)  Let $V$ be a two (resp. four) dimensional linear subspace of $\R^5$. Then the set
	\beq\label{guo8.2}
	\{(r, s, t): dim(\pi_{r, s, t}(V))\le 1  (\text{ resp. } 2)\}
	\endeq
	is contained in a 2-variety.\\
\end{lem}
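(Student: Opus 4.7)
The plan is to reformulate each set via the identification, implicit in Remark \ref{ww14} and rank-nullity, that $\dim(\pi_{r,s,t}(V))$ equals the rank of $\mc{M}_V(r,s,t)$. Each of the three sets is then the vanishing locus of certain polynomial minors of $\mc{M}_V$, and it will suffice to exhibit a nonzero polynomial of degree at most two that vanishes on the set. Throughout I will use a consequence of the hypothesis of Theorem \ref{1803thm1.1}: $Q_1$ and $Q_2$ must be linearly independent, for otherwise two rows of the defining determinant would be proportional and that determinant would vanish identically for every $(u,v,w)$.

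For part 1 (the case $\dim V=1$), I would write $V=\mathrm{span}(x)$ and observe that the set is the common zero locus of the three linear polynomials $x\cdot n_1, x\cdot n_2, x\cdot n_3$ in $(r,s,t)$. It suffices to show that not all three can be identically zero: if they were, expanding the formulas yields $x_1=x_2=x_3=0$ and $\nabla(x_4 Q_1+x_5 Q_2)\equiv 0$, so $x_4 Q_1+x_5 Q_2\equiv 0$; linear independence of $Q_1, Q_2$ then forces $(x_4,x_5)=0$, hence $x=0$, contradicting $\dim V=1$. For the case $\dim V = 4$ in part 2, writing $V^\perp=\mathrm{span}(w)$, a rank-nullity argument shows $\dim(\pi_{r,s,t}(V))\le 2$ if and only if $V^\perp\subset V^\perp_{r,s,t}$, which is equivalent to $w\cdot n_i=0$ for $i=1,2,3$; the argument then proceeds exactly as in part 1.

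The case $\dim V = 2$ in part 2 is the main obstacle. With $V=\mathrm{span}(x,y)$, there are three $2\times 2$ minors $M_{12},M_{13},M_{23}$ of $\mc{M}_V$, each a polynomial of degree at most two in $(r,s,t)$, and the goal is to show at least one is not identically zero. A direct expansion reveals that the degree-two part of each $M_{ij}$ equals $(x_4 y_5-x_5 y_4)$ times a $2\times 2$ minor of the Jacobian of $(Q_1,Q_2)$. Applying the hypothesis of Theorem \ref{1803thm1.1} with $(u,v,w)$ equal to standard basis vectors shows each such Jacobian minor is a nonzero polynomial, so if $x_4 y_5\neq x_5 y_4$ we are done. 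Otherwise, after replacing $y$ by $y-\lambda x$ for appropriate $\lambda$, I may assume $(y_4,y_5)=0$; two-dimensionality of $V$ then forces $(y_1,y_2,y_3)\neq 0$, so the second row of $\mc{M}_V$ becomes the constant nonzero vector $(y_1,y_2,y_3)$. Demanding that all minors vanish identically forces the first row $(x_1,x_2,x_3)+\nabla(x_4 Q_1+x_5 Q_2)$ to be a polynomial scalar multiple of $(y_1,y_2,y_3)$ at every point. Using the symmetry of the Hessian of $x_4 Q_1+x_5 Q_2$ combined with the fact that this polynomial is a pure quadratic, this yields
\[
x_4 Q_1+x_5 Q_2=\mu(y_1 r+y_2 s+y_3 t)^2
\]
for some scalar $\mu$. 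A routine case check dispatches the degenerate possibilities $\mu = 0$ and $(x_4,x_5)=0$; in the remaining case, the identity $x_4\nabla Q_1+x_5\nabla Q_2=2\mu(y_1 r+y_2 s+y_3 t)(y_1,y_2,y_3)$ exhibits the bottom row of the $3\times 3$ matrix in the hypothesis of Theorem \ref{1803thm1.1}, applied with $(u,v,w)=(y_1,y_2,y_3)$, as a nontrivial rational-function combination of the top two rows, forcing that determinant to vanish identically, the desired contradiction. The hardest step is the Hessian-symmetry reduction that produces the perfect-square form of $x_4 Q_1 + x_5 Q_2$, since this is what ultimately connects the vanishing of all $2\times 2$ minors of $\mc{M}_V$ back to a concrete violation of the hypothesis of Theorem \ref{1803thm1.1}.
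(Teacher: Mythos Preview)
Your arguments for the one-dimensional case and the two-dimensional case are correct. For $\dim V=2$ you take a more hands-on route than the paper: the paper identifies $\R^5$ with the space of polynomials $S_1\oplus S_2$ and studies $\pi_{S_1}P_\xi(V)$ via the projections $\pi_{S_1}(V)$ and $\pi_{S_2}(V)$, while you work directly with the minors of $\mc{M}_V$ and the Hessian of $x_4Q_1+x_5Q_2$. Both approaches hinge on the same trichotomy (whether the $(x_4,x_5),(y_4,y_5)$ components span a two-, one-, or zero-dimensional space), and both ultimately feed the vector $(y_1,y_2,y_3)$ into the determinant hypothesis of Theorem~\ref{1803thm1.1}; your perfect-square identity $x_4Q_1+x_5Q_2=\mu(y_1r+y_2s+y_3t)^2$ is a nice concrete substitute for the paper's codimension bookkeeping.

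The four-dimensional case, however, contains a genuine error. With $V^\perp=\mathrm{span}(w)$, rank--nullity gives $\dim(\pi_{r,s,t}(V))=4-\dim(V\cap V_{r,s,t}^\perp)$, so $\dim(\pi_{r,s,t}(V))\le 2$ is equivalent to $V_{r,s,t}^\perp\subset V$, i.e.\ to $V^\perp\subset V_{r,s,t}$---not to $V^\perp\subset V_{r,s,t}^\perp$ as you wrote. Thus the condition is that $w$ \emph{lies in} the span of $n_1,n_2,n_3$, not that $w$ is orthogonal to them, and the argument does not reduce to part~1. Concretely, writing $w=(w_1,\dots,w_5)$, the condition $w\in V_{r,s,t}$ forces the coefficients to be $w_1,w_2,w_3$, and the remaining constraints become the two affine equations
\[
w_4=(w_1,w_2,w_3)\cdot\nabla Q_1,\qquad w_5=(w_1,w_2,w_3)\cdot\nabla Q_2.
\]
You then need to show at least one of these is a nonzero polynomial. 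If both vanished identically one gets $w_4=w_5=0$ and $(w_1,w_2,w_3)\cdot\nabla Q_i\equiv 0$ for $i=1,2$; this does contradict the hypothesis of Theorem~\ref{1803thm1.1} (for instance by plugging $(u,v,w)$ equal to any vector not parallel to $(w_1,w_2,w_3)$ and noting the resulting determinant has a row in the span of the other two at every point), but this is a separate check that your proposal does not supply.
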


\begin{proof}
	First, we will observe one consequence of the condition imposed in Theorem \ref{1803thm1.1}. Take $(u,v,w)=(1,0,0), (0,1,0)$ and $(0,0,1)$, we see that $$\det
	\begin{bmatrix}
	\partial_s Q_1 & \partial_t Q_1 \\
	\partial_s Q_2 & \partial_t Q_2
	\end{bmatrix} ,
	\det
	\begin{bmatrix}
	\partial_r Q_1 & \partial_t Q_1 \\
	\partial_r Q_2 & \partial_t Q_2
	\end{bmatrix},
	\det
	\begin{bmatrix}
	\partial_r Q_1 & \partial_s Q_1 \\
	\partial_r Q_2 & \partial_s Q_2
	\end{bmatrix}$$ are nonzero polynomials in $r,s,t$. (here $\partial_r Q_i$ is a shorthand for $\frac{\partial Q_i}{\partial r}$ and similar for $\partial_s Q_i$ and $\partial_t Q_i$) Thus in particular,
	\beq
	\label{lab2}
	\text{rank}
	\begin{bmatrix}
		\partial_r Q_1(\xi) & \partial_s Q_1(\xi) & \partial_t Q_1(\xi) \\
		\partial_r Q_2(\xi) & \partial_s Q_2(\xi) & \partial_t Q_2(\xi)
	\end{bmatrix}=2
	\endeq
	almost surely. \\\
	
	We start by proving the first statement. Notice that $V_{r, s, t}$ is given by the span of the three vectors
	\beq
	\begin{split}
		& n_1=(1, 0, 0, \partial_r Q_1, \partial_r Q_2),\\
		& n_2=(0, 1, 0, \partial_s Q_1, \partial_s Q_2),\\
		& n_3=(0, 0, 1, \partial_t Q_1, \partial_t Q_2).
	\end{split}
	\endeq
	Let $V\subset \R^5$ be a one-dimensional subspace. Suppose that $V=\text{span}\{x\}$ for some non-zero vector $x=(x_1,x_2,x_3,x_4,x_5)\in \R^5$. The dimension of $\pi_{r, s, t}(V)$ is equal to the rank of the matrix
	\beq\label{guo8.4}
	[x\cdot n_1, x\cdot n_2, x\cdot n_3].
	\endeq
	Moreover, if we view $x\cdot n_i$ with $i=1, 2, 3$ as affine functions in $r, s$ and $t$, we will show that at least one of them does not vanish constantly. Suppose this is not the case. Then $x_1 =x_2 =x_3 =0$ and
	\beq
	\label{lab1}
	x_4 \partial_r Q_1 + x_5 \partial_r Q_2= x_4 \partial_s Q_1+ x_5 \partial_s Q_2= x_4 \partial_t Q_1 + x_5 \partial_t Q_2\equiv 0 .
	\endeq
	Since $x$ is a nonzero vector, $(x_4, x_5)\neq (0,0)$. Hence by \eqref{lab1},
	$$\text{rank}
	\begin{bmatrix}
	\partial_r Q_1(\xi) & \partial_s Q_1(\xi) & \partial_t Q_1(\xi) \\
	\partial_r Q_2(\xi) & \partial_s Q_2(\xi) & \partial_t Q_2(\xi)
	\end{bmatrix}\leq 1.$$
	for every $\xi$.
	This contradicts \eqref{lab2}.\\\
	
	We turn to the proof of the second statement. The following approach is in the spirit of \cite{BDGu}. Define the vector spaces of polynomials
	\beq
	S_0=[1], S_1=[r, s, t] \text{ and } S_2=[Q_1(r, s, t), Q_2(r, s, t)].
	\endeq
	For $\xi=(r_0, s_0, t_0)\in \R^3$, let
	\beq
	P_{\xi}f(r, s, t)=f(\xi)+\partial_r f(\xi) (r-r_0)+\partial_s f(\xi) (s-s_0)+\partial_t f(\xi) (t-t_0)
	\endeq
	be the first order Taylor expansion of the function $f$ at the point $\xi$. Hence $P_{\xi}$ is a projection onto $S_0\oplus S_1$. Moreover, we have
	\beq
	\pi_{S_1} P_{\xi} f(r, s, t)=\partial_r f(\xi) r+\partial_s f(\xi) s+\partial_t f(\xi) t.
	\endeq
	Define $S=S_1\oplus S_2$. Let $V$ be a subspace of $\R ^5$. We could think of $V$ as a subspace of same dimension in $S$ by defining the isomorphism $(x_1,x_2,x_3,x_4,x_5)\mapsto (x_1 r+ x_2 s+ x_3 t +x_4 Q_1 + x_5 Q_2)$ from $\R^5$ to $S$. Under this correspondence, it is easy to see that $\text{dim}(\pi_{\xi}(V))= \text{dim}(\pi_{S_1}P_{\xi}(V))$, where $\pi_{\xi}(V)$ is the projection of $V$ onto the tangent space to $\mc{S}$ at $\xi$ when $V$ is considered as a subspace of $\R ^5$. Thus, we need to prove that almost surely in $\xi$,
	\beq
	\text{dim}(\pi_{S_1}P_{\xi}(V))=\text{dim}[(\partial_r f(\xi), \partial_s f(\xi), \partial_t f(\xi)): f\in V] \ge
	\begin{cases}
		2, \text{ if } \text{dim}(V)=2 \\
		3, \text{ if } \text{dim}(V)=4
	\end{cases}
	\endeq
	This will imply that the set \eqref{guo8.2} is contained in a 2-variety because the ``bad" set where the dimension is smaller than what we need is contained in the zero set of some nonzero polynomial of degree at most $2$.\\
	
	We first consider the case $\text{dim}(V)=2$. By contradiction, we assume that
	\beq\label{guo8.9}
	\text{dim}(\pi_{S_1}P_{\xi}(V))\le 1 \text{ for every } \xi.
	\endeq
	Taking $\xi=(0, 0, 0)$, we have $\pi_{S_1} P_{\xi}(V)=\pi_{S_1} (V)$. Hence $\text{dim}(\pi_{S_1}(V))\le 1$. This further implies $\text{dim}(\pi_{S_2}(V))\ge 1$. We will consider two cases.
	\begin{description}
		\item[Case 1. $\text{dim}(\pi_{S_2}(V))=2.$] In this case we have $\pi_{S_2}(V)=S_2$. By a direct calculation,
		$$
		\text{dim} (\pi_{S_1} P_{\xi}(V))\geq \text{dim} (\pi_{S_1} P_{\xi} (\pi_{S_2}(V)))=
		$$
		$$
		\text{dim} (\pi_{S_1} P_{\xi}(S_2))=\text{rank}
		\begin{bmatrix}
		\partial_r Q_1(\xi) & \partial_s Q_1(\xi) & \partial_t Q_1(\xi) \\
		\partial_r Q_2(\xi) & \partial_s Q_2(\xi) & \partial_t Q_2(\xi)
		\end{bmatrix}
		$$
		which equals 2 almost surely in $\xi$, by \eqref{lab2}. This is a contradiction to \eqref{guo8.9}.
		
		\item[Case 2. $\text{dim}(\pi_{S_2}(V))=1$.] In this case $\text{dim}(\pi_{S_1}(V))=1$. Also, $\pi_{S_1}P_{\xi}(V)$ is a subspace of $\pi_{S_1}(P_{\xi}\pi_{S_1}(V))+\pi_{S_1}P_{\xi}(S_2)$ of co-dimension at most one.Observe that $\pi_{S_1}(P_{\xi}\pi_{S_1}(V))=\pi_{S_1}(V)$. Suppose that $\pi_{S_1}(V)$ is spanned by the non-zero vector $(u, v, w)\in \R^3$. Then the dimension of the space $\pi_{S_1}(V)+\pi_{S_1}P_{\xi}(S_2)$ is given by
		\beq\label{guo8.12}
		\text{rank}\begin{bmatrix}
			\partial_r Q_1(\xi) & \partial_s Q_1(\xi) & \partial_t Q_1(\xi) \\
			\partial_r Q_2(\xi) & \partial_s Q_2(\xi) & \partial_t Q_2(\xi) \\
			u & v & w
		\end{bmatrix}
		\endeq
		which, by the assumption of Theorem \ref{1803thm1.1}, equals three almost surely in $\xi$. Hence $\pi_{S_1}P_{\xi}(V)$ is at least 2 almost surely in $\xi$. This is again a contradiction to \eqref{guo8.9}.
	\end{description}
	We have finished the proof of the case $\text{dim}(V)=2$.\\
	
	In the end we consider the case $\text{dim}(V)=4$. We will again argue by contradiction. Suppose that
	\beq\label{guo8.13}
	\text{dim}(\pi_{S_1}P_{\xi}(V))\le 2 \text{ for every } \xi.
	\endeq
	Then we obtain $\pi_{S_1}(V)\le 2$ as before. Therefore $\text{dim}( \pi_{S_2}(V))=2$. Hence $\text{dim}(\pi_{S_1}(V))=2$ and $V=\pi_{S_1}(V)\oplus S_2$. Take a non-zero vector $(u, v, w)\in \pi_{S_1}(V)$. Then the dimension of $\pi_{S_1}P_\xi(V)$ is at least equal to the rank from \eqref{guo8.12}, which, by the assumption of Theorem \ref{1803thm1.1}, is three almost surely in $\xi$. This leads to a contradiction to \eqref{guo8.13}. Thus we have finished the proof of the case $\text{dim}(V)=4$.
\end{proof}

\section{Other related manifolds}
\label{s7}

Let $D_{\mc{M}}(N,p)$ be the $l^pL^p$ decoupling constant associated with a $d$-dimensional  manifold $\mc{M}$ in $\R^n$
$$\mc{M}=\{(t_1,\ldots,t_d,\phi_1(t_1,\ldots,t_d),\ldots,\phi_{n-d}(t_1,\ldots,t_d)),\;t_i\in[0,1]\}.$$
The functions $\phi_i$ need not necessarily be quadratic, just continuous.  We claim the following universal lower bound
\beq
\label{ww33}
D_{\mc{M}}(N,p)\gtrsim_{\mc{M}} \max\{N^{\frac{d}{2}(\frac12-\frac1p)},N^{\frac{d}{2}-\frac{n}{p}}\},\;\;p\ge 2.
\endeq
Let us see why this holds true. Theorem 2.2 in \cite{BD2} extends easily to our generality here. It implies that
for  each $p\ge 2$ and each $a_{i_1,\ldots,i_d}\in\C$, $0\le i_1,\ldots,i_d\le N$ we have
\begin{equation*}
	\begin{split}
		\Big(\frac1{N^{2n}}\int_{[0,N^2]^n} & |\sum_{i_1=0}^N\ldots\sum_{i_d=0}^Na_{i_1,\ldots,i_d}  e(x_1\frac{i_1}N+\ldots+x_d\frac{i_d}{N}+\\
		& \sum_{j=d+1}^nx_j\phi_j(\frac{i_1}N,\ldots, \frac{i_d}N))|^{p}dx_1\ldots dx_n\Big)^{\frac1p} \lesssim  D_{\mc{M}}(N^2,p)\|a_{i_1,\ldots,i_d}\|_{l^p}.
	\end{split}
\end{equation*}
Let us now specialize to the case $a_{i_1,\ldots,i_d}\equiv 1$. We get
\begin{equation}\label{fek20}
	\begin{split}
		\Big(\frac1{N^{2n}}\int_{[0,N^2]^n} & |\sum_{i_1=0}^N\ldots\sum_{i_d=0}^Ne(x_1\frac{i_1}N+\ldots+x_d\frac{i_d}{N}+\\
		& \sum_{j=d+1}^nx_j\phi_j(\frac{i_1}N,\ldots, \frac{i_d}N))|^{p}dx_1\ldots dx_n\Big)^{\frac1p}\lesssim  D_{\mc{M}}(N^2,p)N^{\frac{d}{p}}.
	\end{split} 
\end{equation}
We present two lower bounds for \eqref{fek20}. The first is obtained by rewriting \eqref{fek20} (using periodicity) as follows
\begin{multline*}
	\Big(\frac1{N^{2n-d}}\int_{[0,N]^d\times [0,N^2]^{n-d}}|\sum_{i_1=1}^N\ldots\sum_{i_d=1}^Ne(x_1\frac{i_1}N+\ldots+x_d\frac{i_d}{N}+\\
	\sum_{j=d+1}^nx_j\phi_j(\frac{i_1}N,\ldots, \frac{i_d}N))|^{p}dx_1\ldots dx_n\Big)^{\frac1p}
\end{multline*}
and by restricting $|x_1|,\ldots,|x_n|\lesssim_{\mc{M}} 1$. This restriction will almost align the phases of exponentials and will produce the lower bound
$$N^{d}N^{-\frac{2n-d}{p}}.$$
Using H\"older provides the following second lower bound for \eqref{fek20}
\begin{multline*}
	\Big(\frac1{N^{2n}}\int_{[0,N^2]^n}|\sum_{i_1=1}^N\ldots\sum_{i_d=1}^Ne(x_1\frac{i_1}N+\ldots+x_d\frac{i_d}{N}+\\
	\sum_{j=d+1}^nx_j\phi_j(\frac{i_1}N,\ldots, \frac{i_d}N))|^{2}dx_1\ldots dx_n\Big)^{\frac12}.
\end{multline*}
This term is of order $N^{\frac{d}{2}}$, which can be seen by invoking $L^2$ quasi-orthogonality.  Now \eqref{ww33} follows by combining these two lower bounds.

These considerations suggest the following question.

\begin{question}
	Is it true that for each $n>d\ge 1$ there exists a $d$-dimensional  manifold in $\R^n$ whose decoupling constant satisfies
	\beq
	\label{ww34}
	D_{\mc{M}}(N,p)\lesssim_{\epsilon} N^{\epsilon}\max\{N^{\frac{d}{2}(\frac12-\frac1p)},N^{\frac{d}{2}-\frac{n}{p}}\}
	\endeq
	for all $p\ge 2$?
\end{question}
Note that this upper bound is trivially true for $p=2,\infty$. By invoking interpolation as in \cite{BD1}, \eqref{ww34} is equivalent with the inequality
\beq
\label{ww35}
D_{\mc{M}}(N,p)\lesssim_{\epsilon} N^{\frac{d}{2}(\frac12-\frac1p)+\epsilon}
\endeq
for $2\le p\le p_c=\frac{4n}{d}-2$. The largest $p_c$ for which \eqref{ww35} holds for $2\le p\le p_c$ is the  so-called critical exponent for the $l^pL^p$ decoupling for $\mc{M}$. The question we asked is whether there is a manifold for which $p_c=\frac{4n}{d}-2$. It seems likely that the answer is ``yes" at least when $d>\frac{n}3$.

By combining all previous results on decouplings, we have a positive answer in the case $(d,n)=(n-1,n)$ (hypersurfaces, see \cite{BD1}) for all $n\ge 2$. Other known cases are $(d,n)=(2,4)$ (see \cite{BD5}), $(2,5)$ (see \cite{BD2}) and $(2,9)$ (see \cite{BDGu}). And of course, we can now add $(3,5)$. An interesting case for which the above question is open is $d=1$, for all $n\ge 3$. The end of the paper \cite{BD10} contains a discussion with the state of the art for $d=1$. In particular, it proves that \eqref{ww34} holds in some range $2\le p\le p_n$, for some $p_n<4n-2$.
\bigskip

\section{Appendix}
In this Appendix, we will show that the assumption of Theorem \ref{1803thm1.1}, that is, for each nonzero vector $(u,v,w)\in \R^3$
\beq\label{guo10.1}
\det \begin{bmatrix}\frac{\partial Q_1}{\partial r} & \frac{\partial Q_1}{\partial s}& \frac{\partial Q_1}{\partial t}\\\\\frac{\partial Q_2}{\partial r} & \frac{\partial Q_2}{\partial s}& \frac{\partial Q_2}{\partial t}\\\\u & v& w \end{bmatrix}
\endeq
is a nonzero polynomial, is equivalent to Lemma \ref{2201lemma3.1} being true. This is the same as saying, that if one intends to prove the decoupling inequalities \eqref{1803e1.7} via the Bourgain--Demeter  multi-linear approach, then the assumption of Theorem \ref{1803thm1.1} is indeed necessary. Hence it would be reasonable to believe that for two quadratic functions $Q_1, Q_2$ not satisfying this assumption, the desired bound \eqref{1803e1.7} would fail. \\

More specifically, if we define
\beq
\mathcal{Z}:=\{(u, v, w)\in \R^3 \Big| \eqref{guo10.1} \text{ is constantly zero}\},
\endeq
then we will prove the following result.
\begin{lem}
	If $\text{dim}(\mathcal{Z})\ge 1$, then we can find a subspace $V\subset \R ^5$ of dimension $2$ (or $4$ resp.) such that
	$$\{(r, s, t): dim(\pi_{r, s, t}(V))\le 1  (\text{ resp. } 2)\} $$ is the whole space $\R^3$ .
\end{lem}
\begin{proof}
	We introduce some notation. Let $Q_1 (r,s,t)= \frac{1}{2} (A_1 r^2 +A_2 s^2 +A_3 t^2) +A_4 rs +A_5 rt +A_6 st$ and $Q_2 (r,s,t)= \frac{1}{2} (B_1 r^2 +B_2 s^2 +B_3 t^2) +B_4 rs +B_5 rt +B_6 st$ be two homogeneous polynomials of degree two. Let
	\beq
	d_{ij} := A_i B_j -A_j B_i \text{ for } 1\leq i,j \leq 6.
	\endeq
	We will split the proof into three cases, according to the dimension of $\mathcal{Z}$.\\
	
	First, assume $\dim(\mathcal{Z})=3$. Then we obtain that all the two by two minors of the matrix
	\beq\label{guo10.4}
	\begin{bmatrix}\frac{\partial Q_1}{\partial r} & \frac{\partial Q_1}{\partial s}& \frac{\partial Q_1}{\partial t}\\\\\frac{\partial Q_2}{\partial r} & \frac{\partial Q_2}{\partial s}& \frac{\partial Q_2}{\partial t} \end{bmatrix}
	\endeq
	have constantly vanishing determinants. By a direct calculation, this further implies $d_{ij}=0$ for all $1\leq i,j \leq 6$. Hence we obtain that $a Q_1 + b Q_2\equiv 0$ for some non-zero $(a, b)\in \R^2$. In the end, we take
	\beq
	V= \text{span} \{(1,0,0,0,0),(0,0,0,a,b)\}
	\endeq
	and it is easy to see that $\dim{\pi_{r, s, t}(V)}\le 1$ for every $(r,s,t) \in \R ^3$. This finishes the proof of the case $\text{dim}(\mathcal{Z})=3.$\\
	
	
	Next, we assume $\dim(\mathcal{Z})=2$. Let $\mathcal{Z}=\text{span}\{(u_1,v_1,w_1), (u_2,v_2,w_2)\}$. Let
	\beq
	V:=\text{span} \{(u_1,v_1,w_1,0,0),(u_2,v_2,w_2,0,0),(0,0,0,1,0),(0,0,0,0,1)\}.
	\endeq
	We claim that
	\beq
	\dim{\pi_{r, s, t}(V)}\le 2 \text{ for all } (r,s,t)\in \R^3.
	\endeq
	By the rank-nullity theorem, this is equivalent to the fact that
	\beq\label{guo10.8}
	\text{rank} \begin{bmatrix}\frac{\partial Q_1}{\partial r} & \frac{\partial Q_1}{\partial s}& \frac{\partial Q_1}{\partial t}\\\\\frac{\partial Q_2}{\partial r} & \frac{\partial Q_2}{\partial s}& \frac{\partial Q_2}{\partial t}\\\\u_1 & v_1& w_1\\\\ u_2& v_2& w_2 \end{bmatrix}\leq 2
	\endeq
	everywhere in $\R^3$. To prove this claim, we will first do a change of variables to make future computations simpler. To be precise, for a nonsingular linear transformation $M$ from $\R^3$ to $\R^3$, let $\widetilde{Q_i} := Q_i \circ M$. Correspondingly, we define $\widetilde{\mathcal{Z}}$.
	By Remark \ref{rmnf897-594090-ei=rfo=[=1[} it is easy to see that $\widetilde{\mathcal{Z}}=M^{T}\mathcal{Z}$ and the claim that
	\eqref{guo10.8} holds everywhere is equivalent to the fact that
	\beq\label{guo10.9}
	\text{rank} \begin{bmatrix}\frac{\partial \widetilde{Q_1}}{\partial r} & \frac{\partial \widetilde{Q_1}}{\partial s}& \frac{\partial \widetilde{Q_1}}{\partial t}\\\\\frac{\partial \widetilde{Q_2}}{\partial r} & \frac{\partial \widetilde{Q_2}}{\partial s}& \frac{\partial \widetilde{Q_2}}{\partial t}\\\\ \widetilde{u_1} & \widetilde{v_1}& \widetilde{w_1}\\\\ \widetilde{u_2}& \widetilde{v_2}& \widetilde{w_2} \end{bmatrix}\leq 2
	\endeq
	everywhere, where $(\widetilde{u_i} , \widetilde{v_i}, \widetilde{w_i})= (u_i,v_i,w_i) M$. We now choose $M$ so that
	\beq
	(\widetilde{u_1} , \widetilde{v_1}, \widetilde{w_1})=(1, 0, 0) \text{ and } (\widetilde{u_2} , \widetilde{v_2}, \widetilde{w_2})=(0, 1, 0).
	\endeq
	This condition tells us that
	\beq
	\det \begin{bmatrix} \frac{\partial \widetilde{Q_1}}{\partial s} & \frac{\partial \widetilde{Q_1}}{\partial t}\\\\\frac{\partial \widetilde{Q_2}}{\partial s} & \frac{\partial \widetilde{Q_2}}{\partial t}
	\end{bmatrix} = \det \begin{bmatrix} \frac{\partial \widetilde{Q_1}}{\partial r} & \frac{\partial \widetilde{Q_1}}{\partial t}\\\\\frac{\partial \widetilde{Q_2}}{\partial r} & \frac{\partial \widetilde{Q_2}}{\partial t}
	\end{bmatrix} \equiv 0 \text{ and } \det \begin{bmatrix} \frac{\partial \widetilde{Q_1}}{\partial r} & \frac{\partial \widetilde{Q_1}}{\partial s}\\\\\frac{\partial \widetilde{Q_2}}{\partial r} & \frac{\partial \widetilde{Q_2}}{\partial s}
	\end{bmatrix}\not\equiv 0,
	\endeq
	as otherwise $(0, 0, 1)\in \widetilde{\mathcal{Z}}$, which is a contradiction to $\text{dim}(\widetilde{\mathcal{Z}})=2$. We further conclude that
	\beq
	\frac{\partial \widetilde{Q_1}}{\partial t}=\frac{\partial \widetilde{Q_2}}{\partial t}\equiv 0,
	\endeq
	which implies the desired estimate \eqref{guo10.9}.
	To see this, we argue by contradiction. If not, then for almost every $(r,s,t)\in \R^3$, we could find $a(r,s,t), b(r,s,t) \in \R$ such that
	$$\begin{bmatrix} \frac{\partial \widetilde{Q_1}}{\partial s} \\\\\frac{\partial \widetilde{Q_2}}{\partial s}
	\end{bmatrix} = a(r,s,t) \begin{bmatrix} \frac{\partial \widetilde{Q_1}}{\partial t} \\\\\frac{\partial \widetilde{Q_2}}{\partial t}
	\end{bmatrix}  \;\;\text{   and   }\;\; \begin{bmatrix} \frac{\partial \widetilde{Q_1}}{\partial r} \\\\\frac{\partial \widetilde{Q_2}}{\partial r}
	\end{bmatrix} = b(r,s,t) \begin{bmatrix} \frac{\partial \widetilde{Q_1}}{\partial t} \\\\\frac{\partial \widetilde{Q_2}}{\partial t}
	\end{bmatrix} .$$
	Hence
	$$\det \begin{bmatrix} \frac{\partial \widetilde{Q_1}}{\partial r} & \frac{\partial \widetilde{Q_1}}{\partial s}\\\\\frac{\partial \widetilde{Q_2}}{\partial r} & \frac{\partial \widetilde{Q_2}}{\partial s}
	\end{bmatrix}=0$$ almost everywhere. This contradicts the fact that the zero set of a nonzero polynomial has Lebesgue measure zero.\\\
	
	Finally, we look at the case $\dim(\mathcal{Z})=1$. Let $\mathcal{Z}= \text{span} \{(u,v,w)\}$. We claim that there exists a nonzero vector $(x,y)\in \R^2$ such that
	\beq\label{guo10.13}
	\text{rank} \begin{bmatrix} u& v& w\\\  x\frac{\partial Q_1}{\partial r}+y\frac{\partial Q_2}{\partial r} & x\frac{\partial Q_1}{\partial s}+ y\frac{\partial Q_2}{\partial s} & x\frac{\partial Q_1}{\partial t} +y\frac{\partial Q_2}{\partial t} \end{bmatrix} \leq 1
	\endeq
	everywhere. This, if true, combined with the rank-nullity theorem, will imply that $\dim{\pi_{r, s, t}(V)}\leq 1$ for all $(r,s,t)\in \R^3$ with
	\beq
	V:=\text{span} \{(u,v,w,0,0),(0,0,0,x,y)\}.
	\endeq
	
	Thus, what is left is to prove that \eqref{guo10.13} holds true everywhere in $\R^3$. We will make a change of variables similar to the one in the previous case. We also adopt the notation from there. Let $M$ be a $3\times 3$ nonsingular matrix such that $(1,0,0)= (u,v,w) M$. Then $\widetilde{\mathcal{Z}}=\text{span} \{(1,0,0)\} $ and \eqref{guo10.13}
	everywhere is equivalent with
	\beq\label{guo10.15}
	\text{rank} \begin{bmatrix} 1& 0& 0\\\  x\frac{\partial \widetilde{Q_1}}{\partial r}+y\frac{\partial \widetilde{Q_2}}{\partial r} & x\frac{\partial \widetilde{Q_1}}{\partial s}+ y\frac{\partial \widetilde{Q_2}}{\partial s} & x\frac{\partial \widetilde{Q_1}}{\partial t} +y\frac{\partial \widetilde{Q_2}}{\partial t} \end{bmatrix} \leq 1
	\endeq
	everywhere. From now on, we will drop the tilde notation and assume that our original linear space $\mathcal{Z}$ is spanned by $(1,0,0)$. Hence \eqref{guo10.13} is equivalent with finding a non-zero $(x,y)\in \R^2$ such that
	\beq\label{appkk}
	(x, y)\begin{bmatrix} \frac{\partial Q_1}{\partial s} & \frac{\partial Q_1}{\partial t}\\\\\frac{\partial Q_2}{\partial s} & \frac{\partial Q_2}{\partial t}
	\end{bmatrix}\equiv 0.
	\endeq
	Recall that $\mathcal{Z}$ is spanned by the vector $(1, 0, 0)$. This implies
	\beq\label{guo10.17}
	\det \begin{bmatrix} \frac{\partial Q_1}{\partial s} & \frac{\partial Q_1}{\partial t}\\\\\frac{\partial Q_2}{\partial s} & \frac{\partial Q_2}{\partial t}
	\end{bmatrix}
	=\det \begin{bmatrix}
		A_2 s+ A_4 r+ A_6 t & A_3 t+A_5 r+A_6 s\\
		B_2 s+ B_4 r+ B_6 t & B_3 t+B_5 r+B_6 s
	\end{bmatrix}
	\equiv 0.
	\endeq
	Hence \eqref{appkk} is equivalent to saying that the two-dimensional vector $(\frac{\partial Q_1}{\partial t}, \frac{\partial Q_2}{\partial t})$ does not change directions, which, by a direct calculation, is further equivalent to
	\beq\label{guo10.18}
	d_{36}=d_{35}=d_{56}=0.
	\endeq
	To prove \eqref{guo10.18}, we make a second change of variables. The goal of this change of variables is to make $A_6=B_6=0$. This will automatically imply $d_{36}=d_{56}=0$. Moreover, we would like to keep the space $\mathcal{Z}$ unchanged, that is, we want $(1, 0, 0)$ to be invariant under this change of variables. This can be realized by choosing a linear transformation $M$ of the form
	\beq\label{guo10.19}
	M=\begin{bmatrix}
		1 & 0 & 0\\
		0 & m_{22} & m_{23}\\
		0 & m_{32} & m_{33}\\
	\end{bmatrix}
	\endeq
	with $M'=\begin{bmatrix}m_{22} & m_{23}\\m_{32}&m_{33} \end{bmatrix}$ being some non-singular $2\times 2$ matrix. After this linear transformation, we will obtain two new quadratic functions $Q_i'=Q_i\circ M$ for $i\in \{1, 2\}$. Again for the sake of simplicity, we will keep using the original notation $Q_i$ instead of $Q_i'$.
	
	It is straightforward to see that there exists a  linear transformation of the form \eqref{guo10.19} that sends at least one of $A_6$ and $B_6$ to zero. Indeed, what this $M$ does is to keep the $r$ variable unchanged and to diagonalize the quadratic form in $s,t$ variables.  Let us show that the other one will be zero simultaneously. Recall that the space $\mathcal{Z}$ is still spanned by the vector $(1, 0, 0)$. Hence the relation \eqref{guo10.17} still holds. Letting $r=0$, \eqref{guo10.17} further implies that
	\beq
	\text{rank}\begin{bmatrix}
		A_2 & A_3 & A_6\\
		B_2 & B_3 & B_6
	\end{bmatrix}
	\le 1.
	\endeq
	Hence the two quadratic forms
	$$\frac{1}{2} (A_2 s^2+A_3 t^2) +A_6 st,     \frac{1}{2}(B_2 s^2 + B_3 t^2) + B_6 st$$
	are linearly dependent. So the matrix \eqref{guo10.19} can be chosen such that $A_6=B_6=0$ at the same time.\\
	
	To prove \eqref{guo10.18}, what remains is to prove $d_{35}=0$. We argue by contradiction. Assume $d_{35}\neq 0.$ We look at the assumption \eqref{guo10.17}. By setting $A_6=B_6=0$, we obtain
	\beq
	d_{23}=d_{25}=0 \text{ and } d_{43}=d_{45}=0.
	\endeq
	These, combined with the assumption that $d_{35}\neq 0$, further imply that $A_2=B_2=0$ and $A_4=B_4=0$. Together with $A_6=B_6=0$, we conclude that $\frac{\partial Q_1}{\partial s}=\frac{\partial Q_2}{\partial s}\equiv 0$. Hence the vector $(0, 0, 1)$ also belongs to $\mathcal{Z}$, which means $\text{dim}(\mathcal{Z})\ge 2$. This contradicts the assumption that $\text{dim}(\mathcal{Z})=1$. This finishes the proof of the case $\text{dim}(\mathcal{Z})=1$, thus the proof of the whole lemma.
\end{proof}


\noindent Department of Mathematics, Indiana University, 831 East 3rd St., Bloomington IN 47405\\
\emph{Email address}: demeterc@indiana.edu\\

\noindent Department of Mathematics, Indiana University, 831 East 3rd St., Bloomington IN 47405\\
\emph{Email address}: shaomingguo@math.wisc.edu\\

\noindent Department of Mathematics, Indiana University, 831 East 3rd St., Bloomington IN 47405\\
\emph{Email address}: shif@indiana.edu

\end{document}